\documentclass{article}
%~~~~~~~~~ Document setup
\usepackage[english]{babel} % English formatting
\usepackage[utf8]{inputenc} % Standard encoding
\usepackage[a4paper,left=3cm,bottom=3cm]{geometry} % Page formatting
\usepackage{indentfirst} % Indents the first paragraph
\usepackage{pdflscape} % Make pages landscape
\usepackage{graphicx} % Advanced graphics package
\usepackage[export]{adjustbox} 
\usepackage[colorlinks=true,citecolor=blue,urlcolor=blue,linkcolor=black]{hyperref} % Link colors
\usepackage{flafter} % Reference any 'float'
\usepackage[framemethod=tikz]{mdframed} % Box off stuff
\usepackage{color} % Color support
\usepackage{wrapfig} % Text flowing around figures
\usepackage{enumerate} % numbered lists 
\usepackage{enumitem} % control over the layout of list envns, supersedes enumerate 
\usepackage{sectsty} % controls section headers
\usepackage{caption, subcaption} % customize captions in floating envs
\usepackage{lastpage} % for running footer page count, e.g., '3 of 5', if desired. 
\usepackage{textcomp} % supports text companion fonts 
\usepackage{enumerate} % numbered lists 
\usepackage{enumitem} % control over the layout of list envns, supersedes enumerate 
\usepackage{sectsty} % controls section headers
\usepackage[export]{adjustbox}

%~~~~~~~~~ References  
\usepackage[
maxnames=99,
giveninits=true,
doi=false,
isbn=false]{biblatex} % Imports biblatex package
\renewbibmacro{in:}{}
\DefineBibliographyExtras{english}{%
	\uspunctuation%
}
\DeclareFieldFormat[misc]{title}{\mkbibquote{#1}}
\addbibresource{references.bib} % Import the bibliography file
%~~~~~~~~~ Math Packages 
\usepackage{amsmath} % Maths type package
\usepackage{bm} % Bold font maths
\usepackage{amsthm} % Theorem environment 
\usepackage{arydshln} % Draw dashed lines in array/tabular env: \hdashline and \cdashline, etc
\usepackage{bigints} % Draw big integral sign when needed: \bigint, \bigints(ss), \bigoint, etc
\usepackage{amsfonts} % Additional fonts etc
\usepackage{amssymb} % Additional symbols; arrows, \circdast, etc
\usepackage{mathtools} % Really just for \coloneqq 
\usepackage{listings} % Auto latex formatting for code; requires sty package 
\usepackage{tikz} % As a last resort 
\usepackage{mathrsfs} % script font 
\usepackage{faktor} % faktor font 
\usepackage{musicography} % for flat and sharp operators 
\usepackage{breqn} %auto line breaking for displayed equations
\usepackage{url} % for citing websites, etc 
\usepackage{array,multirow,booktabs} % for fancier tables
\usepackage{cleveref}

%~~~~~~~~~ Multi-column setup
\usepackage{multicol} % Multi Column Environment
\setlength{\columnsep}{1cm}
\setlength{\columnseprule}{1pt}
\usepackage{float} % Lets you add images to multi-column environment

%~~~~~~~~~ Header and Footer

\usepackage{fancyhdr} % Fancy headers
\pagestyle{fancy}
\fancyhf{} % Clear all arguments

\fancyhead[L]{\nouppercase{\leftmark}} % Section Title at top left
\renewcommand{\headrulewidth}{0.4pt}

\fancyfoot[C]{\hrulefill\quad\raisebox{-3pt}{Page \thepage \hspace{1pt} of \pageref{LastPage}}\quad\hrulefill} % page number in footer

%~~~~~~~~~ The Theorem environments:

% For sequntial numbering regardless of type: 

% \numberwithin{equation}{section}
% \theoremstyle{plain} 
% \newtheorem{thm}[equation]{Theorem}
% \newtheorem{cor}[equation]{Corollary}
% \newtheorem{lem}[equation]{Lemma}
% \newtheorem{prop}[equation]{Proposition}

% \theoremstyle{definition}
% \newtheorem{defn}[equation]{Definition}

% \theoremstyle{remark}
% \newtheorem{rem}[equation]{Remark}
% \newtheorem{ex}[equation]{Example}
% \newtheorem{notation}[equation]{Notation}
% \newtheorem{terminology}[equation]{Terminology}

% For different counters for each type: 
\numberwithin{equation}{section}

\theoremstyle{plain}
\newtheorem{thm}{Theorem}[section]
\newtheorem{cor}{Corollary}[section]
\newtheorem{lem}{Lemma}[section]

\theoremstyle{definition}
\newtheorem{defn}{Definition}[section]

\theoremstyle{remark}
\newtheorem{rem}{Remark}[section]

%~~~~~~~~~ New Commands

\newcommand{\norm}[1]{\left\Vert #1 \right\Vert}
\newcommand{\ip}[1]{\left\langle #1 \right\rangle}
\newcommand{\abs}[1]{\left\vert #1 \right\vert}
\newcommand{\C}{\mathbb{C}}
\newcommand{\R}{\mathbb{R}}
\newcommand{\N}{\mathbb{N}}
\newcommand{\E}{\mathbb{E}}

\DeclareMathOperator{\tr}{tr}
\newcommand{\bmat}[1]{\begin{bmatrix} #1 \end{bmatrix}}

\newcommand{\scr}[1]{\mathscr{#1}}
\newcommand{\dif}{\mathop{}\!\mathrm{d}}

\DeclareMathOperator*{\argmin}{arg\,min}
\DeclareMathOperator{\Ima}{Im}

\DeclareMathOperator{\Cond}{Cond}

\DeclareMathOperator{\Var}{Var}
\DeclareMathOperator{\Cov}{Cov}

\DeclareMathOperator*{\esssup}{ess\,sup}
\newcommand{\bs}[1]{\boldsymbol{#1}}

% for keeping math centered an aligning other comments further right: e.g. \begin{alignedat} Ax &= b &\zerotext[4em]{for $x\in V$}

\title{An Optimal Weighted Least-Squares Method for Operator Learning}
\usepackage{authblk}
\author[1,2]{John Turnage}
\author[3]{Matthew Lowery}
\author[5]{John Jakeman}
\author[4]{Zachary Morrow}
\author[1,2]{\\Akil Narayan}
\author[3]{Varun Shankar}

\affil[1]{Department of Mathematics, University of Utah}
\affil[2]{Scientific Computing and Imaging Institute, University of Utah }
\affil[3]{Kahlert School of Computing, University of Utah}
\affil[4]{Scientific Machine Learning, Sandia National Laboratories}
\affil[5]{Optimization and Uncertainty Quantification, Sandia National Laboratories}
% \author{John Turnage, Matthew Lowery, John Jakeman, Zachary Morrow, \\ Akil Narayan, Varun Shankar}

\date{\today}
\pagenumbering{gobble} 

%~~~~~~~~~ Title Page

\begin{document}
\maketitle
\begin{abstract}
We consider the problem of learning an unknown, possibly nonlinear operator between separable Hilbert spaces from supervised data. Inputs are drawn from a prescribed probability measure on the input space, and outputs are (possibly noisy) evaluations of the target operator. We regard admissible operators as square-integrable maps with respect to a fixed approximation measure, and we measure reconstruction error in the corresponding Bochner norm. For a finite-dimensional approximation space $V$ of dimension $N$, we study weighted least squares estimators in $V$ and establish probabilistic stability and accuracy bounds in the Bochner norm. We show that there exist sampling measures and weights---defined via an operator-level Christoffel function---that yield uniformly well-conditioned Gram matrices and near-optimal sample complexity, with a number of training samples $M$ on the order of $N \log N$. We complement the analysis by constructing explicit operator approximation spaces in cases of interest: rank-one linear operators that are dense in the class of bounded linear operators, and rank-one polynomial operators that are dense in the Bochner space under mild assumptions on the approximation measure. For both families we describe implementable procedures for sampling from the associated optimal measures. Finally, we demonstrate the effectiveness of this framework on several benchmark problems, including learning solution operators for the Poisson equation, viscous Burgers' equation, and the incompressible Navier-Stokes equations.

\end{abstract}

%\tableofcontents

%~~~~~~~~~ Begin Sections

\pagenumbering{arabic} 
\setcounter{page}{1}

\section{Introduction}

Operator learning seeks to approximate mappings between function spaces—such as PDE solution operators—directly in the continuum, rather than via fixed discretizations. This perspective has gained prominence with the rise of neural operator architectures. Among these, DeepONets approximate operators using branch–trunk sensor networks \cite{lu2021learning}; Fourier Neural Operators (FNOs) learn spectral-domain convolutions that generalize across grids \cite{li2020fourier}; and many related families—including encoder–decoder, kernel-based, random-feature, and graph-based models—approximate operators through latent finite-dimensional surrogates \cite{bhattacharya2021model, lowery2024kernel, nelsen2021random, li2020neural}.

Early theoretical work established universal approximation guarantees for such models \cite{lanthaler2022error, kovachki2021universal, kovachki2023neural}, while more recent efforts have developed quantitative error bounds under Lipschitz \cite{bhattacharya2021model, chen2023deep, schwab2023deep, liu2024deep} or holomorphic \cite{herrmann2024neural, lanthaler2022error} regularity assumptions. However, these advances primarily concern representational capacity. In practical settings, where training data is scarce and expensive—often involving costly numerical solutions of PDEs—the dominant source of error arises from generalization rather than approximation. Recent work has begun to emphasize this finite-data regime \cite{adcock2024optimal, chen2023deep}.

Most theoretical treatments frame operator learning in terms of parametric latent spaces, where encoder–decoder architectures serve as the approximation class, and error is measured in terms of reconstruction fidelity. 
In contrast, we consider approximation directly in a Bochner space of operators, $L^2_\rho(\mathcal X; \mathcal Y)$, where $\mathcal X$ and $\mathcal Y$ are separable Hilbert spaces of input and output functions and $\rho$ is a prescribed probability measure on $\mathcal X$.
Approximants are built in a fixed, finite-dimensional subspace $V\subset L^2_\rho(\mathcal X; \mathcal Y)$, and we measure error in the corresponding Bochner norm. This removes any assumptions about encoder optimality and places operator learning squarely in an approximation-theoretic setting. 

To address the finite-data regime, we adopt and extend recent developments in weighted least squares (WLS) approximation. When input data is sampled proportionally to the Christoffel function (or its weighted or induced variants), the empirical $L^2$ problem exhibits high-probability stability and optimal approximation rates with near-minimal sample complexity. These results, originally developed for scalar-valued approximations \cite{stabilityAccuracyLS, optimalWeightedLS} and recently generalized to Hilbert-valued settings \cite{adcock2022sparse, adcock2022towards, adcock2024optimalsamp}, provide a principled foundation for operator learning in Bochner $L^2$ spaces.
 
Within this framework, we make the following contributions:
\begin{itemize}
\item We construct orthonormal operator families---both linear and nonlinear---that generate approximation subspaces $V\subset L^2_\rho(\mathcal X; \mathcal Y)$ and establish density results under mild assumptions. These density results provide universal approximation statements for our approach.
\item We adapt WLS theory to the operator learning setting, introducing operator-valued Christoffel densities and sampling routines tied to orthonormal operator bases.
\item We prove high-probability stability and accuracy bounds for the WLS estimator, separating projection error, solver noise, and output discretization effects.
\item We demonstrate the framework on Poisson, viscous Burgers', and Navier–Stokes PDEs, and study how sampling, approximation spaces, and evaluation norms influence performance.
\end{itemize}

The theoretical foundations of weighted least squares in Hilbert-valued function spaces are now well established. Our contribution lies in adapting this framework to the operator learning setting, where approximation occurs directly in a Bochner space of operators. In this context, we construct explicit orthonormal operator bases and derive corresponding operator-level Christoffel functions that define optimal sampling measures. These constructions are not only novel but also practically necessary: they enable implementable operator approximation without reliance on encoder–decoder architectures or latent representations, and provide theoretical underpinnings for recently proposed approaches along these lines \cite{sharma_polynomial_2025}. 

Section \ref{sec:Op_learn_problem} formalizes the operator learning problem, specifying the admissible operator class, approximation measure, and noise model. Section \ref{sec: Main Results: LS-OP} then introduces the discrete weighted least-squares estimator, the associated optimal sampling measures, and the main stability and accuracy results. Concrete operator approximation spaces and sampling procedures are constructed in Section \ref{sec: Construct Approx Space}, and numerical results are presented in Section \ref{sec: Numerical Results}, where we apply the framework to Poisson, viscous Burgers', and Navier–Stokes solution operators.

\section{The Operator Learning Problem}\label{sec:Op_learn_problem}

We now formalize the learning problem. Let $\mathcal X$ and $\mathcal{Y}$
be separable Hilbert spaces, and suppose we aim to recover an unknown operator $K: \mathcal{X} \to \mathcal{Y}$ from a finite number of noisy measurements. To quantify error, we fix a probability measure  $\rho$ on $\mathcal{X}$ and define the admissible class of operators to be the Bochner space $L^2_\rho(\mathcal X; \mathcal Y)$ equipped with the induced norm. 

Given a probability measure $\rho$ on $\mathcal{X}$, the Bochner space $L^2_\rho(\mathcal X; \mathcal Y)$ is the Hilbert space of Borel-measurable operators
\begin{align}\label{eq:XY-def}
  L^2_\rho(\mathcal X; \mathcal Y)\ni A: (\mathcal X, \scr F, \rho) \to (\mathcal Y, \mathscr{B}(\mathcal Y, \norm{\cdot}_{\mathcal Y}))
\end{align}
 endowed with the norm and inner product
\begin{subequations}
\begin{align}\label{eq:L2-norm}
  \norm{A}^2_{L^2_\rho(\mathcal X; \mathcal Y)} \coloneqq \int_{\mathcal X}\norm{A(f)}_{\mathcal Y}^2 \dif \rho(f) < \infty, \\
\label{eq:L2-ip}
  \ip{A,B}_{L^2_\rho(\mathcal X; \mathcal Y)} \coloneqq \int_{\mathcal X}\ip{A(f), B(f)}_{\mathcal Y}\dif\rho(f),
\end{align}
  where  $\mathscr{F}$ is a $\sigma$-algebra on $\mathcal{X}$, $\mathscr{B}(\mathcal{Y}, \|\cdot\|_{\mathcal{Y}})$ is the Borel $\sigma$-algebra on $\mathcal{Y}$ with respect to the norm $\|\cdot\|_{\mathcal{Y}}$, and $\left\langle \cdot, \cdot \right\rangle_{\mathcal{Y}}$ is the inner product on $\mathcal{Y}$. 
\end{subequations}
 When there is no ambiguity, we write $L^2_\rho = L^2_\rho(\mathcal{X}; \mathcal{Y})$. 
 
In many applications, $\rho$ is specified indirectly as the law of an $\mathcal X$-valued random variable 
\begin{align}\label{eq:X-RV-def}
  X:(\Omega, \scr G, \mathbb P) \to (\mathcal X, \scr B(\mathcal X, \norm{\cdot}_{\mathcal X})),
\end{align}
so that $\rho = X_{\#}\mathbb{P}$ is the push-forward of $\mathbb P$ under $X$.\footnote{We provide a brief introduction to Hilbert-space-valued random variables and the Bochner integral in \cref{subsec: Prob Measures on Sep H Space}.} We refer to $\rho$ as the \textit{approximation measure} on $\mathcal{X}$ because our primary goal is to construct approximations to $K$ that are accurate with respect to the Bochner norm induced by $\rho$.

We are given access to noisy evaluations of $K$ at a finite collection of inputs. That is,
we observe data of the form
\begin{equation}\label{eq:data_model}
g^i = K(f^i) + \eta^i, \hspace{0.5cm} i = 1,\cdots,M,
\end{equation}
where the inputs $f^i\in \mathcal X$ are drawn from a \emph{sampling measure} $\mu$ on \(X\) (specified
in Section \ref{sec: Main Results: LS-OP}), and $\eta^i$ models observation noise.

We adopt a deterministic noise model $\eta^i = \eta(f^i),$ with 
\begin{align}\label{eq: noise_defn}
  \eta \in \widetilde{L}^\infty_\rho \coloneqq L^\infty_{\rho}(\mathcal X;\mathcal Y) \cap L^2_\rho(\mathcal X; \mathcal Y)  = \{A\in L^2_\rho(\mathcal{X};\mathcal{Y})\text{ : } \esssup_{f\in \textrm{supp } \rho} \norm{A(f)}_{\mathcal Y} < \infty\}.
\end{align}
This setting is natural for our motivating applications, in which $K$ is the solution
operator of a PDE, and the observations $g^i$ are produced by a deterministic numerical
solver. In that case, $\eta$ accounts for discretization error between the true solution
$K(f^i)$ and the solver output, as well as truncation errors arising from representing
$f^i$ and $g^i$ in finite-dimensional spaces. Both contributions are deterministic
functions of $f^i$, and $\norm{\eta}_{L^\infty_\rho(\mathcal X;\mathcal Y)}$ provides a uniform bound on such
discretization errors. 
For similar analyses using a stochastic noise model in the setting of scalar-valued function approximation, see e.g. \cite{stabilityAccuracyLS,optimalWeightedLS}. 
 
We assume an \textit{a priori} prescription of an $N$-dimensional approximation subspace ($N \in \N$) of $L^2_\rho(\mathcal X;\mathcal Y)$ spanned by an orthonormal basis $\Phi_n$:
\begin{align}\label{eq: approx_space}
  V &\coloneqq \mathrm{span}_{n\in [N]}\{\Phi_{n}\}, &
  \left\langle \Phi_n, \Phi_m \right\rangle_{L^2_\rho} &= \delta_{m,n}, &
  m, n &\in [N].
\end{align}
The optimal estimator of $K$ in $V$ is the $L^2_{\rho}$-orthogonal projection
\begin{align}\label{eq:ls}
  K^*_{V} &\coloneqq \Pi_{V}K = \argmin_{A \in V}\norm{K-A}_{L^2_{\rho}}, & \epsilon_{V}(K) \coloneqq \left\| K^\ast_{V} - K \right\|_{L^2_\rho} = \min_{A \in V} \| K - A \|_{L^2_\rho}.
\end{align}
The projection $K_V^*$ and error $\epsilon_V(K)$ are defined entirely at the
continuous level and are not directly computable from finitely many data pairs
$(f^i,g^i)$.

In the next section we derive a discrete weighted least-squares problem, built from
\eqref{eq:data_model} and the basis \eqref{eq: approx_space}, that produces a computable estimator $\tilde K_V^*$ of $K_V^*$ and admits high-probability stability and accuracy guarantees.

\section{Discrete Least-Squares Theory}\label{sec: Main Results: LS-OP}
In this section we introduce the discrete weighted least-squares estimator corresponding to
the continuous projection problem \eqref{eq:ls} and state the main stability and
accuracy results. We work in the Bochner space $L^2_\rho(\mathcal X;\mathcal Y)$ and in the
$N$-dimensional approximation space $V \subset L^2_\rho(\mathcal X;\mathcal Y)$ with orthonormal basis
$\{\Phi_n\}_{n \in [N]}$ defined in Section \ref{sec:Op_learn_problem}.

As in Section \ref{sec:Op_learn_problem}, we assume access to noisy data pairs $(f^i,g^i)$ of the form \eqref{eq:data_model},
where $f^i \sim \mu$ are drawn i.i.d.\ from a sampling measure $\mu$ on $\mathcal X$. A central task in this section is to construct sampling measures $\mu$
and weights $w$ for which the resulting discrete least-squares problem is stable and
near-optimal in sample complexity. We identify a construction $\mu = \mu(V,\rho)$ that is tantamount to an operator-approximation analogue of \textit{leverage score sampling} or \textit{optimal sampling} \cite{optimalWeightedLS,mahoney_randomized_2011,woodruff_sketching_2014}. Our main results echo known results in the finite-dimensional (function approximation) setting: there is an optimal choice of sampling measure $\mu$, a function only of $V$ and $\rho$, for which stability and optimal accuracy can be achieved with high probability using a near-optimal number of $M \gtrsim N \log N$ measurements.

\begin{figure}[h]
  \centering
  \includegraphics[width=0.75\linewidth]{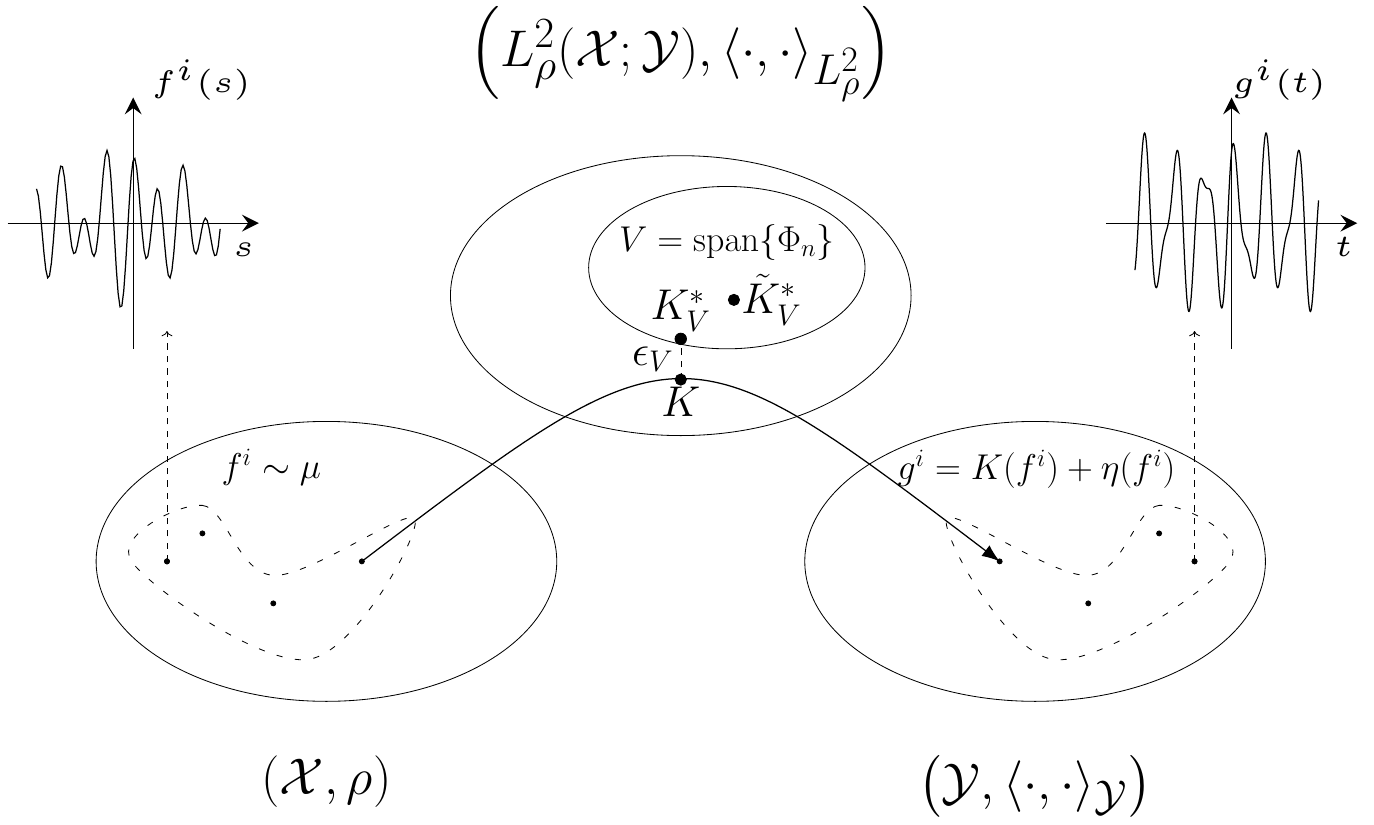}
  \caption{A Schematic of the Least-Squares Problem.}
  \label{fig:LS-Diagram}
\end{figure}

\subsection{Problem Formulation}

With the data $(f^i,g^i)$ and  a positive weight functional $w:\mathcal X\to \R_{+}$ (analogous to quadrature weight function), we formulate the following discrete weighted least-squares problem for the approximation $\tilde{K}_{V}^*$:
\begin{align}\label{eq:discrete-ls}
  \tilde{K}_{V}^* &\in \argmin_{A \in V} \norm{K + \eta - A}_{L^2_{\rho, M}}, & 
  \norm{A}_{L^2_{\rho,M}}^2 &\coloneqq \frac{1}{M} \sum_{i\in [M]} w(f^i)\norm{A(f^i)}^2_{\mathcal{Y}}.
\end{align}
The relationship between the weight functional $w$ and the approximation measure $\rho$ will be made clear shortly. 
The problem above is a linear least squares problem, with an explicitly computable unique solution when the discrete semi-norm $\|\cdot\|_{L^2_{\rho,M}}$ is a proper norm on $V$: namely, 
$$\tilde{K}^*_V = \sum_{n\in [N]}c_n\Phi_n, \quad \text{with} \quad  \bs{c} = \bs{G}^{-1}\bs{g}, \quad (\bs{G})_{j,k} = \ip{\Phi_k, \Phi_j}_{L^2_{\rho,M}},\quad \text{and}\quad (\bs g)_j = \ip{K + \eta, \Phi_j}_{L^2_{\rho,M}}. $$

A schematic of this discrete least-squares problem is given in Figure \ref{fig:LS-Diagram}. When evaluating $K$ is expensive, the principal cost of this procedure lies in generating the data $\{g^i\}_{i\in [M]}$; solving the least-squares systems is typically negligible by comparison. 

If the inputs $f^i$ are selected as i.i.d.\ samples from $\mu$, then in the asymptotically large-$M$ regime, we expect that $\tilde{K}_{V}^\ast$ converges to $K_{V}^\ast + \Pi_{V}\eta$ so long as the weighted discrete least-squares formulation \eqref{eq:discrete-ls} converges to its companion continuum formulation in \eqref{eq:ls}. To ensure this asymptotic consistency, we make the following assumptions:
\begin{align}\label{eq:asymptotic-assumptions}
  f^i &\stackrel{\mathrm{iid}}{\sim} \mu, &  
  \rho &\ll \mu, & 
  w &= \frac{\mathrm{d} \rho}{\mathrm{d} \mu} >0, 
\end{align}
where the absolute continuity requirement $\rho \ll \mu$ means
\begin{align*}
  \mu(E) = 0 \hskip 5pt \Rightarrow \hskip 5pt \rho(E) = 0 \hskip 5pt \forall \hskip 3pt E \in \mathscr F.
\end{align*}
Under \eqref{eq:asymptotic-assumptions}, the discrete semi-norm \eqref{eq:discrete-ls} is an unbiased
Monte Carlo estimator of the Bochner norm \eqref{eq:L2-norm}, and in the limit
$M \to \infty$ the discrete problem \eqref{eq:discrete-ls} recovers the continuous
projection problem \eqref{eq:ls}.

The remainder of this section studies the pre-asymptotic, finite-data regime. We introduce
a Christoffel function and the associated weighted Nikolskii constant that control the
concentration of the Gram matrix $\bs G$ around the identity, identify an optimal sampling
measure $\mu$ for which the least-squares problem is uniformly well-conditioned with
high probability, and derive stability and accuracy bounds for the estimator
$\tilde K_V^*$.  Computational considerations are discussed in \cref{sec: Construct Approx Space}.

A summary of the notation we use throughout this paper is provided in \cref{tab:notation}.

\begin{table}[htb]
  \begin{center}
  \resizebox{\textwidth}{!}{
    \renewcommand{\tabcolsep}{0.4cm}
    \renewcommand{\arraystretch}{1.3}
    {\small
    \begin{tabular}{@{}ccp{0.8\textwidth}@{}}
      \toprule
      $(\mathcal{X}, \mathscr{F}, \rho)$, $\|\cdot\|_{\mathcal{X}}$, $\left\langle \cdot,\cdot\right\rangle_{\mathcal{X}}$ & \eqref{eq:XY-def} & Hilbertian measure space of input functions, norm, and inner product \\
      $X \sim \rho$ & \eqref{eq:X-RV-def} & Random variable realizing functions on $\mathcal{X}$ and its distribution \\
      $\mathcal{Y}$, $\|\cdot\|_{\mathcal{Y}}$, $\left\langle \cdot, \cdot \right\rangle_{\mathcal{Y}}$ & \eqref{eq:XY-def} & Hilbertian space of output functions, norm, and inner product \\
      $L^2_\rho(\mathcal{X} ;  \mathcal{Y}), \norm{\cdot}_{L^2_\rho}$ & \eqref{eq:L2-norm} & Space of Bochner square-integrable mappings from $\mathcal{X}$ to $\mathcal{Y}$ and its norm\\
      $V, \{\Phi_n\}_{n\in [N]}$ & \eqref{eq: approx_space} & $N$-dimensional subspace of $L^2_\rho(\mathcal X; \mathcal{Y})$ and associated orthonormal basis\\
      $M$, $N$ & & Number of samples and dimension of approximation subspace $V\subset L^2_\rho(\mathcal X;\mathcal Y)$\\
      $K^*_{V}, \epsilon_{V}(K)$ & \eqref{eq:ls} & Optimal estimator of $K\in L^2_\rho$ in $V$ and associated projection error\\
      $g^i, \eta$ & \eqref{eq:data_model} & Observation of a sample $f^i\sim \mu$ with noise $\eta(f^i)$ \\
      $\mu, w$ & \eqref{eq:asymptotic-assumptions} & Sampling measure and weight function defining the discrete least squares problem\\
      $\bs{G}$ &\eqref{eq:normal-equations} & The Gram matrix associated to the discrete least squares problem\\
      $\tilde{K}^*_{V}$, $\norm{\cdot}_{L^2_{\rho,M}}$ & \eqref{eq:discrete-ls} & Solution to the discrete weighted least squares problem under the discrete semi-norm\\
      $\kappa_{w}^{\infty}$ & \eqref{eq: Nikolskii} & The weighted Nikolskii constant\\
      $\Pi_W$ & \eqref{eq:discrete-ls} & The orthogonal projection operator onto a subspace $W$\\
      $\{\xi_j\}, \{\psi_j\}$ & & Orthonormal bases for $\mathcal{X}$ and $\mathcal{Y},$ respectively\\
      $\N_F^{\infty}$ & \eqref{eq: inf_multi_idx_finite_supp} & The set of countably infinite multi-indices with finite support \\
    \bottomrule
    \end{tabular}
  }
    \renewcommand{\arraystretch}{1}
    \renewcommand{\tabcolsep}{12pt}
  }
  \end{center}
  \caption{Notation used throughout this paper.} \label{tab:notation}
\end{table}

\subsection{Stability}
We provide results that motivate a proper construction of the sampling measure $\mu$ to ensure stability of the least squares problem \eqref{eq:discrete-ls}. The results we present are operator generalizations of well-known least squares procedures for approximating vectors or functions, cf. \cite{mahoney_randomized_2011,woodruff_sketching_2014,stabilityAccuracyLS,optimalWeightedLS}.

The stability of the least squares problem depends on the Gram matrix \(\bs{G}\) concentrating to the identity matrix \(\bs{I}\). This ensures the problem is well-conditioned and numerically reliable, as formalized in the following lemma, with proof provided in Appendix \ref{Pf: comp_norms}.
\begin{lem}\label{lem:comp_norms}
    For all $\delta \in (0,1)$
    $$\Cond(\bs G) \le \frac{1+\delta}{1-\delta} \iff \norm{\bs{G} - \bs{I}}_2 \le \delta \iff  (1-\delta)\norm{A}^2_{L^2_\rho} \le \norm{A}_{L^2_{\rho,M}}^2 \le (1+\delta)\norm{A}_{L^2_\rho}^2\hspace{0.25cm}(\forall A \in V).$$
\end{lem}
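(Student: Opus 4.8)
The plan is to translate all three conditions into statements about the spectrum of $\bs G$ and then to compare spectra. First I would record the structural facts about $\bs G$: because $\ip{\cdot,\cdot}_{L^2_{\rho,M}}$ is a (possibly degenerate) inner product on $V$ and $\{\Phi_n\}$ is a basis, the matrix with $(\bs G)_{j,k}=\ip{\Phi_k,\Phi_j}_{L^2_{\rho,M}}$ is Hermitian and positive semidefinite. Writing a generic $A\in V$ in coordinates as $A=\sum_{n\in[N]}c_n\Phi_n$ with $\bs c=(c_n)$, orthonormality of $\{\Phi_n\}$ in $L^2_\rho$ gives $\norm{A}_{L^2_\rho}^2=\norm{\bs c}_2^2$, while bilinearity of the discrete form gives $\norm{A}_{L^2_{\rho,M}}^2=\bs c^{*}\bs G\bs c$. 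Hence the rightmost statement is exactly the two-sided Rayleigh-quotient bound $(1-\delta)\norm{\bs c}_2^2\le \bs c^{*}\bs G\bs c\le (1+\delta)\norm{\bs c}_2^2$ for all $\bs c\in\C^N$.

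Next I would invoke the Courant--Fischer (min--max) characterization of the eigenvalues of the Hermitian matrix $\bs G$: the two-sided bound above holds for all $\bs c$ if and only if $\lambda_{\min}(\bs G)\ge 1-\delta$ and $\lambda_{\max}(\bs G)\le 1+\delta$, i.e. the spectrum of $\bs G$ lies in $[1-\delta,1+\delta]$. Since $\bs G$ is Hermitian, $\bs G-\bs I$ is Hermitian with eigenvalues $\{\lambda_i(\bs G)-1\}$ and spectral norm $\norm{\bs G-\bs I}_2=\max_i\abs{\lambda_i(\bs G)-1}$; therefore $\norm{\bs G-\bs I}_2\le\delta$ is equivalent to $\lambda_i(\bs G)\in[1-\delta,1+\delta]$ for every $i$. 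This settles the equivalence between the middle and right conditions, and is the part I expect to go through with no friction.

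For the condition-number condition I would argue through the same spectral interval. If the spectrum lies in $[1-\delta,1+\delta]$ then, since $\delta<1$ forces $1-\delta>0$, the matrix $\bs G$ is positive definite (hence invertible, so the least-squares solution is well defined), and $\Cond(\bs G)=\lambda_{\max}(\bs G)/\lambda_{\min}(\bs G)\le(1+\delta)/(1-\delta)$. The delicate direction, and the one I expect to be the main obstacle, is the converse: recovering the spectral interval from the condition-number bound alone. A bound on $\lambda_{\max}/\lambda_{\min}$ is scale-invariant and does not by itself pin the eigenvalues near $1$ (any multiple $c\bs I$ has condition number $1$ yet arbitrary spectrum), so this implication cannot be closed from the ratio in isolation. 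Making it rigorous requires exploiting the normalization intrinsic to the setup, namely that $\bs G$ is the Gram matrix of an $L^2_\rho$-orthonormal basis under the discrete form that is an unbiased estimator of $\norm{\cdot}_{L^2_\rho}$, so that the reference scale is fixed at $1$; I would state this explicitly rather than appeal to the condition number alone. With that observation the three conditions coincide and the lemma follows.
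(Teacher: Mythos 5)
Your proof of the equivalence between $\norm{\bs G - \bs I}_2 \le \delta$ and the two-sided norm bound is correct and is essentially the paper's own argument: writing $A = \sum_n c_n \Phi_n$, orthonormality gives $\norm{A}^2_{L^2_\rho} = \norm{\bs c}_2^2$ and $\norm{A}^2_{L^2_{\rho,M}} = \bs c^\top \bs G \bs c$, after which the statement reduces to the variational characterization of the extreme eigenvalues of the symmetric matrix $\bs G - \bs I$; the paper phrases this via $\abs{\bs c^\top (\bs G - \bs I)\bs c} \le \norm{\bs G - \bs I}_2 \norm{\bs c}_2^2$ together with an extremal eigenvector for the converse, which is the same computation as your Courant--Fischer step. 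Your implication $\norm{\bs G - \bs I}_2 \le \delta \Rightarrow \Cond(\bs G) \le \frac{1+\delta}{1-\delta}$ is likewise fine.

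The gap is in the remaining direction, and here your diagnosis is correct but your proposed repair is not. You are right that $\Cond(\bs G) \le \frac{1+\delta}{1-\delta}$ cannot imply $\norm{\bs G - \bs I}_2 \le \delta$ on its own ($\bs G = 2\bs I$ is a counterexample), and you are right that the paper's proof, which calls this equivalence ``an immediate consequence of the definitions,'' does not address this; only the one-sided implication is immediate. However, the normalization you invoke does not close the argument. Unbiasedness of the discrete norm only fixes $\E[\bs G] = \bs I$, which constrains nothing about a single realization of $\bs G$. Even the exact pointwise normalization that does hold under the optimal weights \eqref{eq:opt weight and measure} --- namely $w(f)\sum_{n}\norm{\Phi_n(f)}^2_{\mathcal Y} \equiv N$, which forces $\tr \bs G = N$ for every draw --- is insufficient once $N \ge 3$: a symmetric matrix with spectrum $\left(\tfrac{3}{5}, \tfrac{3}{5}, \tfrac{9}{5}\right)$ has trace $3$ and condition number $3 = \frac{1+\delta}{1-\delta}$ for $\delta = \tfrac{1}{2}$, yet $\norm{\bs G - \bs I}_2 = \tfrac{4}{5} > \delta$. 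So no scale-fixing observation of this type can rescue the biconditional; the honest resolution is to weaken the first equivalence to the one-sided implication $\norm{\bs G - \bs I}_2 \le \delta \Rightarrow \Cond(\bs G) \le \frac{1+\delta}{1-\delta}$. This weakening is harmless downstream: every subsequent result (Theorem~\ref{thm:Samp Complexity}, Corollary~\ref{cor:OptStab and Sample Complexity Estiamte}, and the accuracy bounds) is driven by the event $\norm{\bs G - \bs I}_2 \le \delta$, never by a condition-number hypothesis.
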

Such pre-asymptotic stability is therefore equivalent to a norm comparison of $\norm{\cdot}_{L^2_\rho}$ and $\norm{\cdot}_{L^2_{\rho,M}}$ and  is controlled in large part by the following constant: 
\begin{align}\label{def: kappa_inf}
  \kappa^{\infty}_{w,V} &\coloneqq \norm{ \kappa_{w,V}}_{L^{\infty}_{\rho}(\mathcal X; \R)}, &
  \kappa_{w,V}(f) &\coloneqq \sum_{n\in [N]}w(f)\norm{\Phi_n(f)}^2_{\mathcal{Y}}
\end{align}
where $\kappa_{w,V}$ is the reciprocal of the weighted Christoffel function of the subspace $V$.\footnote{Hereafter, we will drop the explicit dependence of $\kappa_{w,V}^\infty$ and $\kappa_{w,V}$ on $V$ if the context is clear.} Via the Cauchy-Schwarz inequality, $\kappa_w^{\infty}$ yields a \textit{weighted Nikolskii-type} inequality that relates $L^2$ and $L^\infty$ measurements of operators:
\begin{align}\label{eq: Nikolskii}
  \kappa_w^{\infty} \coloneqq \sup_{A \in V\backslash\{0\}} \frac{\|A\|^2_{L^\infty_\rho(\mathcal{X};\mathcal{Y})}}{\|A\|^2_{L^2_\rho(\mathcal{X};\mathcal{Y})}} = 
    \inf_{\alpha \in \R_+ }\left\{\alpha : \norm{A}^2_{L^\infty_\rho(\mathcal X; \mathcal Y)} \le \alpha \norm{A}^2_{L^2_\rho(\mathcal X; \mathcal Y)} \hspace{0.25cm} (\forall A \in V)\right\}.
\end{align}
Both $\kappa_w$ and $\kappa^{\infty}_w$ depend only on $V$ and $w$, not on the choice of orthonormal basis $\{\Phi_n\}_{n\in [N]}$ \cite{adcock2022towards}. For further details on the Christoffel function, we refer the reader to \cite{narayan2017christoffel}. The constant $\kappa^\infty_w$ for general $(w,V)$ can be infinite, but a strict lower bound is easily obtainable. Since
\begin{align*}
\kappa_w^{\infty} \ge \kappa_w(f) &=
\sum_{n\in[N]}w(f)\norm{\Phi_n(f)}_{\mathcal Y}^2
  \hspace{0.5cm} (\rho \text{ a.e.}),
\end{align*}
then the assumptions \eqref{eq:asymptotic-assumptions} imply
$$\kappa_w^{\infty} = \int_{\mathcal X} \kappa_w^{\infty}\dif \mu  \ge \int_{\mathcal X}\kappa_w(f)\dif \mu(f) = \int_{\mathcal X}\sum_{n\in[N]}\norm{\Phi_n(f)}^2_{\mathcal Y}\dif \rho(f) = \sum_{n\in [N]}\norm{\Phi_n}^2_{L^2_\rho(\mathcal X;\mathcal Y)} = N.$$
The following result articulates how $\kappa_w^\infty$ influences pre-asymptotic stability of least squares.
\begin{thm}[Sample Complexity for Weighted Least Squares Stability]\label{thm:Samp Complexity}
Let $\epsilon, \delta \in (0,1),$ $\mu$ be a probability measure on $\mathcal X$ satisfying \eqref{eq:asymptotic-assumptions}, and $\{f^i\}_{i\in[M]}$ be iid samples drawn under $\mu$. If $w$ is the weight functional specified in \eqref{eq:asymptotic-assumptions} and 
\begin{align}\label{eq:SampleComplexity} 
    M &\ge c_{\delta}\kappa^{\infty}_{w} \log\left(\frac{2N}{\epsilon}\right), 
\end{align}
with $c_{\delta} \coloneqq \left[\delta + (1-\delta)\log(1-\delta)\right]^{-1} > 1$, then 
$$\Pr\{\norm{\bs G - \bs I}_2 \le \delta\} \ge 1-\epsilon. $$
\end{thm}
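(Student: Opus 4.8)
The plan is to express the Gram matrix as a normalized sum of i.i.d.\ positive semidefinite random matrices and then apply a matrix Chernoff inequality. For each sample, define the $N\times N$ matrix $(\bs Y_i)_{j,k}\coloneqq w(f^i)\ip{\Phi_k(f^i),\Phi_j(f^i)}_{\mathcal Y}$, so that $\bs G=\frac1M\sum_{i\in[M]}\bs Y_i$ with the $\bs Y_i$ i.i.d.\ (the $f^i$ being i.i.d.). Each $\bs Y_i$ is the weighted Gram matrix in $\mathcal Y$ of the vectors $\Phi_1(f^i),\dots,\Phi_N(f^i)$, hence PSD, since $\bs v^\top\bs Y_i\bs v=w(f^i)\norm{\sum_k v_k\Phi_k(f^i)}_{\mathcal Y}^2\ge 0$ for $\bs v\in\R^N$. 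Two quantitative facts then feed the Chernoff bound. First, the change of measure $w\dif\mu=\dif\rho$ (valid since $\rho\ll\mu$ with $w=\dif\rho/\dif\mu$) together with orthonormality \eqref{eq: approx_space} gives $(\E\bs Y_i)_{j,k}=\int_{\mathcal X}\ip{\Phi_k,\Phi_j}_{\mathcal Y}\dif\rho=\delta_{j,k}$, so $\E\bs G=\bs I$. Second, because $\bs Y_i\succeq0$ its largest eigenvalue is dominated by its trace, $\norm{\bs Y_i}_2\le\tr(\bs Y_i)=w(f^i)\sum_{n\in[N]}\norm{\Phi_n(f^i)}_{\mathcal Y}^2=\kappa_w(f^i)\le\kappa_w^\infty$, where the final inequality holds $\mu$-a.s.\ because \eqref{eq:asymptotic-assumptions} (with $w>0$) forces $\mu$ and $\rho$ to be equivalent, so the $\rho$-essential bound defining $\kappa_w^\infty$ is also $\mu$-essential.

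With $\E\bs G=\bs I$ and $\lambda_{\max}(\frac1M\bs Y_i)\le\kappa_w^\infty/M$ almost surely, I would apply the matrix Chernoff inequality to both spectral tails. Its lower-tail estimate yields $\Pr\{\lambda_{\min}(\bs G)\le 1-\delta\}\le N\exp(-M/(c_\delta\kappa_w^\infty))$ with rate $c_\delta^{-1}=\delta+(1-\delta)\log(1-\delta)$, precisely the constant in \eqref{eq:SampleComplexity}, and the upper-tail estimate controls $\Pr\{\lambda_{\max}(\bs G)\ge 1+\delta\}$. A union bound over the two tails supplies the factor $2$, Chernoff's ambient dimension supplies the factor $N$, and together they produce $\log(2N/\epsilon)$; imposing the sample bound \eqref{eq:SampleComplexity} drives each tail probability below $\epsilon/2$. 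Since $\norm{\bs G-\bs I}_2\le\delta$ is equivalent, by \cref{lem:comp_norms}, to $\{\lambda_{\min}(\bs G)\ge1-\delta\}\cap\{\lambda_{\max}(\bs G)\le1+\delta\}$, the complementary bound gives $\Pr\{\norm{\bs G-\bs I}_2\le\delta\}\ge1-\epsilon$.

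Because the matrix Chernoff inequality does the heavy lifting once the reduction is in place, the main obstacle is the operator-specific bookkeeping rather than any hard probabilistic estimate: confirming that $\bs Y_i$ is genuinely PSD and that its operator norm is controlled by the operator-level Christoffel constant $\kappa_w^\infty$ even though $\mathcal Y$ may be infinite-dimensional (the trace bound is what makes this clean), and ensuring the almost-sure spectral bound is valid under the sampling law $\mu$ and not merely under $\rho$. A second delicate point is matching constants: the lower tail produces the stated rate $c_\delta$, and one must verify that the upper tail fits within the same sample budget and that the two one-sided bounds combine to give precisely the factor $2N$ inside the logarithm.
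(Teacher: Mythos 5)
Your proposal follows the paper's own proof almost step for step: the same reduction of $\bs G$ to a normalized sum of i.i.d.\ positive semidefinite random matrices, the same matrix Chernoff bound applied to the two spectral tails, and the same almost-sure norm bound in terms of $\kappa_w^\infty$. Your two departures are both improvements: bounding $\lambda_{\max}(\bs Y_i)\le \tr(\bs Y_i)=\kappa_w(f^i)$ is cleaner than the paper's entrywise Cauchy--Schwarz/rank-one argument, and your remark that $w>0$ makes $\mu$ and $\rho$ mutually absolutely continuous (so that the $\rho$-essential bound defining $\kappa_w^\infty$ is also a $\mu$-a.s.\ bound) settles a point the paper passes over in silence.

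The one step you leave unverified --- that ``the upper tail fits within the same sample budget'' --- is, however, a genuine gap, and it cannot be closed as stated. The two Chernoff rates are $r_-(\delta)=\delta+(1-\delta)\log(1-\delta)=1/c_\delta$ for the lower tail and $r_+(\delta)=(1+\delta)\log(1+\delta)-\delta$ for the upper tail, and on $(0,1)$ one has $r_+(\delta)<r_-(\delta)$, not the reverse (Taylor: $r_\pm(\delta)=\delta^2/2\mp\delta^3/6+\cdots$; at $\delta=\tfrac12$, $r_+\approx 0.108$ while $r_-\approx 0.153$). So the upper tail, not the lower one, is the bottleneck: under \eqref{eq:SampleComplexity} the upper-tail Chernoff bound equals $N\left(\epsilon/(2N)\right)^{r_+/r_-}$, which exceeds $\epsilon/2$ for all $N,\epsilon$ (its ratio to $\epsilon/2$ is $(2N/\epsilon)^{1-r_+/r_-}>1$) and can exceed $\epsilon$ by orders of magnitude. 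You could not have known this, but the paper's own proof stumbles at exactly the point you flagged: it invokes the inequality $\frac{e^{\delta}}{(1+\delta)^{1+\delta}}\le\frac{e^{-\delta}}{(1-\delta)^{1-\delta}}$ for $\delta\in(0,1)$, which is reversed --- at $\delta=\tfrac12$ the left-hand side is $\approx 0.897$ and the right-hand side is $\approx 0.858$. The theorem is rescued by replacing $c_\delta$ with $\tilde c_\delta \coloneqq \left[(1+\delta)\log(1+\delta)-\delta\right]^{-1}\ge c_\delta$, the reciprocal of the smaller rate: then both tails fall below $\epsilon/2$ under the corresponding sample budget, the $M\gtrsim N\log N$ scaling is untouched, and only the absolute constant degrades (about $9.2$ rather than $6.5$ at $\delta=\tfrac12$). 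Alternatively, the stated $c_\delta$ does suffice for the one-sided conclusion $\Pr\{\lambda_{\min}(\bs G)\le 1-\delta\}\le\epsilon$, which is the norm inequality actually required by \cref{thm: Truncated Approx Error in Prob}.
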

See Appendix \ref{Pf: Sample Complexity} for the proof.  Although the multiplicative factor of $c_\delta$ in the sample complexity estimate \eqref{eq:SampleComplexity} satisfies $\lim_{\delta \to 0} c_\delta = + \infty$, it is quite small for the moderate values of $\delta$ required in practice: e.g., $c_\delta \approx 6.5$ when $\delta = \frac{1}{2}$.

Hence, minimizing $\kappa_w^\infty$ provides a provable strategy for ensuring least squares stability. Moreover, there is an explicit choice of sampling $\mu$ that minimizes $\kappa_w^\infty$.

\begin{thm}[The Optimal Sampling Measure]
Defining the sampling measure $\mu$ and corresponding weight functional $w$ via
\begin{align}\label{eq:opt weight and measure}
  \dif \mu(f) &= \frac{\sum_{n\in [N]}\norm{\Phi_n(f)}^2_{\mathcal Y}}{N}\dif \rho(f), & 
    w(f) &= \frac{\dif \rho}{\dif \mu}(f) = \frac{N}{\sum_{n\in[N]}\norm{\Phi_n(f)}^2_{\mathcal Y}}
\end{align}
satisfies \eqref{eq:asymptotic-assumptions}, and minimizes the weighted Nikolskii constant, resulting in  $\kappa_w^{\infty} = N$.
\end{thm}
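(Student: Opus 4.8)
The plan is to verify the two assertions separately: first that the pair $(\mu,w)$ in \eqref{eq:opt weight and measure} is admissible in the sense of \eqref{eq:asymptotic-assumptions}, and then that it drives the weighted Nikolskii constant down to its previously established lower bound $N$. Throughout I would write $\kappa_{1,V}(f) \coloneqq \sum_{n\in[N]}\norm{\Phi_n(f)}_{\mathcal Y}^2$ for the reciprocal unweighted Christoffel function, so that by definition $\dif\mu = \tfrac1N \kappa_{1,V}\dif\rho$ and $w = N/\kappa_{1,V}$, and the whole argument reduces to bookkeeping plus the pointwise cancellation $w\cdot\kappa_{1,V} = N$.

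First I would confirm that $\mu$ is a probability measure. Integrating its density against $\rho$ and exchanging sum and integral gives
\[
  \int_{\mathcal X}\dif\mu = \frac1N\sum_{n\in[N]}\int_{\mathcal X}\norm{\Phi_n(f)}_{\mathcal Y}^2\dif\rho(f) = \frac1N\sum_{n\in[N]}\norm{\Phi_n}_{L^2_\rho}^2 = 1,
\]
where the last equality is exactly the orthonormality \eqref{eq: approx_space}. By construction $\mu\ll\rho$ with Radon--Nikodym derivative $\dif\mu/\dif\rho = \kappa_{1,V}/N$. To obtain the remaining assumptions in \eqref{eq:asymptotic-assumptions}, namely $\rho\ll\mu$ and $w = \dif\rho/\dif\mu>0$, I would treat the set $S \coloneqq \{f : \kappa_{1,V}(f) = 0\}$ on which $\dif\mu$ vanishes: there every $\Phi_n$ vanishes, so every $A\in V$ is zero $\rho$-a.e.\ on $S$ and no $L^2_\rho$-norm of an element of $V$ sees $S$. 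Restricting attention to $\{\kappa_{1,V}>0\}$ therefore changes neither $V$ nor any of the norms in \eqref{def: kappa_inf}--\eqref{eq: Nikolskii}, and on that set $w = N/\kappa_{1,V}$ is well defined, strictly positive, and equal to $\dif\rho/\dif\mu$, so $\rho\ll\mu$ holds as required.

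Finally I would evaluate the Christoffel reciprocal $\kappa_w$ of \eqref{def: kappa_inf} under this weight, where the weight cancels the Christoffel function pointwise:
\[
  \kappa_w(f) = w(f)\sum_{n\in[N]}\norm{\Phi_n(f)}_{\mathcal Y}^2 = \frac{N}{\kappa_{1,V}(f)}\,\kappa_{1,V}(f) = N \qquad (\rho\text{-a.e.}),
\]
so that $\kappa_w^\infty = \norm{\kappa_w}_{L^\infty_\rho(\mathcal X;\R)} = N$. Combining this with the lower bound $\kappa_w^\infty\ge N$ already derived in the excerpt---which holds for \emph{every} admissible pair satisfying \eqref{eq:asymptotic-assumptions}---shows the constructed $\mu$ attains the minimum and is therefore optimal.

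I expect the one genuine obstacle to be the measure-theoretic bookkeeping around $S$: one must argue that the vanishing of the Christoffel function occurs only where $V$ is effectively trivial, so that absolute continuity $\rho\ll\mu$ is not actually violated once attention is restricted to the relevant support (equivalently, one may adopt a standing assumption that $\kappa_{1,V}>0$ holds $\rho$-a.e., which rules $S$ out entirely). Everything else is a direct substitution, relying only on orthonormality for the normalization in the first step and on the cancellation $w\cdot\kappa_{1,V} = N$ for the value of the constant.
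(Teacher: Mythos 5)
Your proof is correct and follows essentially the same route as the paper's: normalization of $\mu$ via orthonormality of the $\Phi_n$, the pointwise cancellation $w(f)\sum_{n\in[N]}\norm{\Phi_n(f)}_{\mathcal Y}^2 = N$ giving $\kappa_w^\infty = N$, and the previously established lower bound $\kappa_w^\infty \ge N$ to conclude minimality. Your additional treatment of the degenerate set $S=\{f:\sum_{n\in[N]}\norm{\Phi_n(f)}_{\mathcal Y}^2=0\}$ addresses a point the paper's proof passes over silently (it declares $w = \dif\rho/\dif\mu$ ``immediate,'' which presumes $\rho(S)=0$), and your observation that every element of $V$ vanishes on $S$, so the degeneracy is invisible to all the relevant norms, is the right way to close that small gap.
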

\begin{proof}
First, we note that $\mu$ is in fact a probability measure, since
$$ \int_{\mathcal X}\dif \mu = \int_{\mathcal X}\frac{\sum_{n\in[N]}\norm{\Phi_n(f)}^2_{\mathcal{Y}}}{N}\dif \rho(f) = \frac{1}{N}\sum_{n\in[N]}\norm{\Phi_n}^2_{L^2_\rho(\mathcal X;\mathcal Y)} = 1.$$
That $\frac{\dif \mu}{\dif \rho} = w^{-1}$ is immediate from the above as well. Finally, under this choice of weight functional, the Nikolskii constant is given by
$$\kappa_w^{\infty} = \esssup_{f\in (\mathcal X, \rho)} \kappa_w(f) = \esssup_{f\in (\mathcal X, \rho)} \left(\frac{N}{\sum_{n\in[N]}\norm{\Phi_n(f)}^2_{\mathcal Y}}\right)\sum_{n\in[N]}\norm{\Phi_n(f)}^2_{\mathcal Y} = N,$$
which is minimal. 
\end{proof}

We summarize the results of this section in the following corollary. 
\begin{cor}[Optimal Stability and Sample Complexity Estimate]\label{cor:OptStab and Sample Complexity Estiamte} Let $\epsilon, \delta \in (0,1)$ and $\{f^i\}_{i\in [M]}$ be iid samples drawn under the optimal sampling measure \eqref{eq:opt weight and measure}. Then with
$$M\ge c_\delta N\log\left(\frac{2N}{\epsilon}\right),$$
the weighted least squares problem \eqref{eq:discrete-ls} with optimal weight functional as in \eqref{eq:opt weight and measure} satisfies 
\begin{align*}\Pr\left\{\norm{\bs G - \bs I}_2 \le \delta\right\} 
& \ge 1-\epsilon.
\end{align*}
\end{cor}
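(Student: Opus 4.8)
The plan is to derive this corollary by composing the two preceding results, with no new estimate required. First I would appeal to the immediately preceding Theorem (The Optimal Sampling Measure), which asserts that the particular pair $(\mu,w)$ defined in \eqref{eq:opt weight and measure} satisfies the asymptotic assumptions \eqref{eq:asymptotic-assumptions} and attains the minimal weighted Nikolskii constant $\kappa_w^\infty = N$. This identity is the only piece of information the corollary contributes on top of what has already been proved.

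Next I would check that the hypotheses of \cref{thm:Samp Complexity} hold for this choice: the inputs $\{f^i\}_{i\in[M]}$ are drawn iid from a probability measure $\mu$ obeying \eqref{eq:asymptotic-assumptions}, and $w$ is the associated weight functional. All of these are guaranteed by the construction \eqref{eq:opt weight and measure} together with the admissibility conclusion of the optimal-measure theorem. With the hypotheses verified, I would substitute $\kappa_w^\infty = N$ into the general sample-complexity threshold \eqref{eq:SampleComplexity}, namely $M \ge c_\delta \kappa_w^\infty \log(2N/\epsilon)$; under the optimal measure this reduces verbatim to the stated bound $M \ge c_\delta N \log(2N/\epsilon)$. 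The conclusion $\Pr\{\norm{\bs G - \bs I}_2 \le \delta\} \ge 1-\epsilon$ then follows directly from \cref{thm:Samp Complexity}.

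I do not anticipate any genuine obstacle, since the analytic content lives entirely in the matrix-concentration argument underlying \cref{thm:Samp Complexity} and in the variational minimization underlying the optimal-measure theorem. The single point deserving a moment of care is confirming that the extremal weight $w$ remains admissible for \cref{thm:Samp Complexity}, i.e.\ that substituting the minimizing $w$ back in preserves $\rho \ll \mu$ and the positivity $w > 0$; but this is precisely what the optimal-measure theorem certifies, so the corollary reduces to a one-line substitution.
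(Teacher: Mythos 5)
Your proposal is correct and matches the paper's (implicit) argument exactly: the corollary is stated as a summary obtained by substituting $\kappa_w^\infty = N$ from the optimal-sampling-measure theorem into the threshold \eqref{eq:SampleComplexity} of \cref{thm:Samp Complexity}, after noting that \eqref{eq:opt weight and measure} satisfies \eqref{eq:asymptotic-assumptions}. The paper offers no separate proof precisely because, as you observe, the result is a one-line composition of the two preceding theorems.
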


The result above is quite general. However, as the next result demonstrates, imposing additional structural assumptions on the approximation space $V$ allows for refined sample complexity estimates. 
\begin{thm}[Sample Complexity for Block-Separable Spaces]\label{thm: samp complexity block learning}
Let $\mathcal{Y}_h\subset \mathcal{Y}$ be any finite-dimensional subspace, and let $P\subset L^2_{\rho}(\mathcal X;\R)$ be any $N_{\mathrm{eff}}$-dimensional subspace of square-integrable scalar-valued functions on $\mathcal X$. Consider the tensor-product approximation space
$$V = \mathcal Y_{h}\otimes P \subset L^2_\rho(\mathcal X;\mathcal Y).$$
Let $\epsilon,\delta \in (0,1)$, and let $\{f^i\}_{i\in [M]}$ be iid samples drawn under the optimal sampling measure \eqref{eq:opt weight and measure}. Then with
  $$M \ge c_{\delta}N_{\mathrm{eff}}\log\left(\frac{2N_{\mathrm{eff}}}{\epsilon}\right),$$ 
the weighted least squares problem \eqref{eq:discrete-ls} with optimal weight functional as in \eqref{eq:opt weight and measure} satisfies 
\begin{align*}\Pr\left\{\norm{\bs G - \bs I}_2 \le \delta\right\} \ge 1-\epsilon.
\end{align*}
  In particular, the required sample size depends on  $N_{\mathrm{eff}} = \dim P$ alone, and is independent of $\mathcal{Y}_h.$
\end{thm}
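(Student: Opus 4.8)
The plan is to exploit the tensor-product structure of $V = \mathcal{Y}_h \otimes P$ to reduce the operator-valued stability question to the scalar-valued one already settled by \cref{thm:Samp Complexity}, but applied with ambient dimension $N_{\mathrm{eff}}$ rather than $N = \dim(\mathcal{Y}_h)\,N_{\mathrm{eff}}$. First I would fix an orthonormal basis $\{\psi_j\}_{j\in[\dim\mathcal{Y}_h]}$ of $\mathcal{Y}_h$ in $\norm{\cdot}_{\mathcal{Y}}$ and an $L^2_\rho$-orthonormal basis $\{p_k\}_{k\in[N_{\mathrm{eff}}]}$ of $P$, then form the product family $\Phi_{jk}(f) = p_k(f)\psi_j$. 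A direct computation using $\ip{\psi_j,\psi_{j'}}_{\mathcal{Y}} = \delta_{jj'}$ and the $L^2_\rho$-orthonormality of $\{p_k\}$ gives $\ip{\Phi_{jk},\Phi_{j'k'}}_{L^2_\rho} = \delta_{jj'}\delta_{kk'}$, so $\{\Phi_{jk}\}$ is an orthonormal basis for $V$ and $N = \dim(\mathcal{Y}_h)\,N_{\mathrm{eff}}$.

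The crux is that the operator Christoffel sum factorizes and then collapses. Since $\norm{\psi_j}_{\mathcal{Y}} = 1$,
$$ \sum_{j,k}\norm{\Phi_{jk}(f)}^2_{\mathcal{Y}} = \sum_{j,k}\abs{p_k(f)}^2\norm{\psi_j}^2_{\mathcal{Y}} = \dim(\mathcal{Y}_h)\sum_{k\in[N_{\mathrm{eff}}]}\abs{p_k(f)}^2. $$
Substituting this into \eqref{eq:opt weight and measure}, the factor $\dim(\mathcal{Y}_h)$ cancels against $N = \dim(\mathcal{Y}_h)\,N_{\mathrm{eff}}$, so the optimal sampling measure and weight for $V$ reduce to $\dif\mu = N_{\mathrm{eff}}^{-1}\big(\sum_k\abs{p_k}^2\big)\dif\rho$ and $w = N_{\mathrm{eff}}/\sum_k\abs{p_k}^2$ — precisely the optimal measure and weight of the \emph{scalar} WLS problem on $P\subset L^2_\rho(\mathcal{X};\R)$, with no dependence on $\mathcal{Y}_h$. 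This is the source of the claimed independence.

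Next I would show the Gram matrix inherits a Kronecker structure. Writing out $(\bs G)_{(jk),(j'k')} = \ip{\Phi_{j'k'},\Phi_{jk}}_{L^2_{\rho,M}}$ and using $\ip{\psi_{j'},\psi_j}_{\mathcal{Y}} = \delta_{jj'}$ yields $(\bs G)_{(jk),(j'k')} = \delta_{jj'}\,(\bs G^{\mathrm{sc}})_{kk'}$, where $\bs G^{\mathrm{sc}}$ is the $N_{\mathrm{eff}}\times N_{\mathrm{eff}}$ scalar Gram matrix $(\bs G^{\mathrm{sc}})_{kk'} = \frac1M\sum_i w(f^i)p_{k'}(f^i)p_k(f^i)$. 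Hence $\bs G = \bs I_{\dim\mathcal{Y}_h}\otimes\bs G^{\mathrm{sc}}$ (up to an index ordering that leaves the spectrum unchanged), and since $\bs G - \bs I = \bs I_{\dim\mathcal{Y}_h}\otimes(\bs G^{\mathrm{sc}} - \bs I_{N_{\mathrm{eff}}})$ and the spectral norm of $\bs I\otimes\bs M$ equals $\norm{\bs M}_2$, we obtain the exact identity $\norm{\bs G - \bs I}_2 = \norm{\bs G^{\mathrm{sc}} - \bs I}_2$.

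Finally I would close the argument by invoking \cref{thm:Samp Complexity} in the scalar setting $\mathcal{Y} = \R$, with approximation space $P$ of dimension $N_{\mathrm{eff}}$ and the optimal measure/weight identified above, for which the scalar Nikolskii constant is $\kappa_w^\infty = N_{\mathrm{eff}}$. Because the samples $\{f^i\}$ drawn under the optimal measure for $V$ are identically distributed, with identical weights, to those of the scalar problem, the event $\{\norm{\bs G^{\mathrm{sc}} - \bs I}_2\le\delta\}$ has probability at least $1-\epsilon$ as soon as $M \ge c_\delta N_{\mathrm{eff}}\log(2N_{\mathrm{eff}}/\epsilon)$, and the norm identity transfers the bound verbatim to $\bs G$. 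I expect the only genuine care to lie in the two places where the $\dim(\mathcal{Y}_h)$ dependence must provably vanish — the Christoffel cancellation and the Kronecker identity — while the remainder is bookkeeping and a direct citation of \cref{thm:Samp Complexity}.
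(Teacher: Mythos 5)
Your proof is correct and follows essentially the same route as the paper's: a tensor-product orthonormal basis $\Phi_{jk}(f)=p_k(f)\psi_j$, the observation that the empirical Gram matrix is block-diagonal (your Kronecker form $\bs I\otimes\bs G^{\mathrm{sc}}$ is the paper's direct sum $\bigoplus_i \bs G^i$ with identical blocks), and an application of \cref{thm:Samp Complexity} to the scalar problem on $P$ with $\kappa_w^\infty = N_{\mathrm{eff}}$. One nice point in your favor: you explicitly verify via the Christoffel cancellation that the optimal measure \eqref{eq:opt weight and measure} for $V$ coincides with the optimal measure for the scalar space $P$ --- a fact the theorem statement requires and the paper's proof uses only implicitly when it says ``under the optimal sampling measure associated with $P$.''
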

\begin{proof}
  The full proof is provided in \ref{Pf: Block learning}, but here we provide a brief sketch, since we enforce this structure for many of the approximation spaces considered in Section \ref{sec: Construct Approx Space}. With any orthonormal bases $\{\psi_i\}$ of $\mathcal Y_h$ and $\{\varphi_j\}$ of $P$, the rank-one elements $\Phi_{ij}(f) = \varphi_j(f)\psi_i$ form an $L^2_\rho$-orthonormal basis of $V$, and the empirical Gram matrix decomposes block-diagonally as $\bs G = \bigoplus_i \bs G_P.$ Thus, stability reduces to concentration of the scalar block $\bs G_P$, which under the optimal sampling measure depends on $\kappa_{w,P}^{\infty} = \dim P = N_{\mathrm{eff}},$ yielding the bound above. 
\end{proof}
\subsection{Accuracy}\label{ssec: Accuracy}

With Theorem \ref{thm:Samp Complexity} established, we now turn to a detailed discussion of the accuracy of the approximation \eqref{eq:discrete-ls}. First, \cref{thm: L2 error in expectation} characterizes the approximation of the full infinite-dimensional problem. A similar result for a computationally feasible discretized problem, where the output space is restricted to a finite-dimensional subspace \(\mathcal{Y}_h \subset \mathcal{Y}\), is given in \cref{thm: Truncated Approx Error in Prob}. 

Both results rely on the existence of a norm equivalence between \(\norm{\cdot}_{L^2_{\rho,M}}\) and \(\norm{\cdot}_{L^2_\rho}\), established in Lemma \ref{lem:comp_norms}. 
In these statements we do not assume that the sampling measure \(\mu\) is optimal; instead, we highlight the impact of \(\kappa_w^{\infty}\) on the approximation, where applicable, though we reiterate that under the optimal sampling conditions \eqref{eq:opt weight and measure}, \(\kappa_w^{\infty} = N\).

\subsubsection{Approximation of the Infinite-Dimensional Problem}
Recall that stability guaranteed by, e.g., \cref{cor:OptStab and Sample Complexity Estiamte}, only holds with a certain probability. If this condition fails, then we can no longer guarantee equivalence between $\norm{\cdot}_{L^2_\rho}$ and $\norm{\cdot}_{L^2_{\rho,M}}$ on $V$. To address this situation, we could simply aggressively truncate the estimator if this equivalence does not hold:
\begin{align}\label{def: Conditioned LSP} \tilde{K}^{C}_{V}\coloneqq \begin{cases}
    \tilde{K}^*_{V} & \text{if } \norm{\bs{G}-\bs{I}}_2 \le \delta\\
    0 & \text{else}
\end{cases}.\end{align}
A softer truncation would entail thresholding the operator:
\begin{align}\label{def Trunc LSP}
  \tilde{K}^T_{V} &\coloneqq T_{\tau}\circ \tilde{K}^*_{V}, & T&: \mathcal{Y} \to \mathcal{Y}, & T(g)&=  \frac{g}{\norm{g}_{\mathcal{Y}}} \min\{\norm{g}_{\mathcal{Y}}, \tau\},
\end{align}
where we can choose $\tau > 0$ as any value satisfying
\begin{align}\label{eq:tau-choice}
\norm{K}_{L^\infty_\rho(\mathcal X;\mathcal Y)} \coloneqq \esssup_{f\in \mathcal{X}} \norm{K(f)}_{\mathcal{Y}} \le \tau,
\end{align}
assuming the essential supremum is finite. With these preliminaries behind us, we state the main results. 
\begin{thm}[$L^2_\rho$ Approximation Error in Expectation, cf. \cite{stabilityAccuracyLS,optimalWeightedLS}]\label{thm: L2 error in expectation}
Let $\epsilon, \delta \in (0,1)$ and $\{f^i\}_{i\in [M]}\subset \mathcal X$ be iid samples drawn under a probability measure $\mu$. If $M \ge c_\delta \kappa_{w}^{\infty}\log(2N/\epsilon),$ with $c_\delta$ as in \eqref{eq:SampleComplexity}, then 
\begin{enumerate}
\item $\E\left[\norm{K - \tilde{K}^T_{V}}_{L^2_\rho}^2\right] \le  \left(1 + \frac{2\gamma(N)}{(1-\delta)^2}\right)\epsilon_{V}(K)^2 + \frac{2(\gamma(N) + 1)}{(1-\delta)^2}\norm{\eta}^2_{L^2_\rho}  + 4\tau^2\epsilon,$ and 
\item $\E\left[\norm{K - \tilde{K}^C_{V}}_{L^2_\rho}^2\right] \le  \left(1 + \frac{2\gamma(N)}{(1-\delta)^2}\right)\epsilon_{V}(K)^2 + \frac{2(\gamma(N) + 1)}{(1-\delta)^2}\norm{\eta}^2_{L^2_\rho}  + \epsilon\norm{K}^2_{L^2_\rho},$
\end{enumerate}
where $\gamma(N) \coloneqq (c_{\delta}\log(2N/\epsilon))^{-1}.$ 
\end{thm}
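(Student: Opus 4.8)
The plan is to condition on the stability event $E \coloneqq \{\norm{\bs G - \bs I}_2 \le \delta\}$, whose complement has probability at most $\epsilon$ by \cref{thm:Samp Complexity} (the hypothesis $M \ge c_\delta\kappa_w^\infty\log(2N/\epsilon)$ is precisely what that theorem requires). For either estimator I would split $\E[\norm{K - \tilde K_V}^2_{L^2_\rho}] = \E[\norm{K-\tilde K_V}^2_{L^2_\rho}\mathbb 1_E] + \E[\norm{K-\tilde K_V}^2_{L^2_\rho}\mathbb 1_{E^c}]$. The off-event term is where the two parts differ and is the easy piece: for $\tilde K^T_V$ we have $\norm{\tilde K^T_V(f)}_{\mathcal Y}\le\tau$ by construction and $\norm{K(f)}_{\mathcal Y}\le\tau$ by \eqref{eq:tau-choice}, so the integrand is at most $4\tau^2$ pointwise and the term is at most $4\tau^2\Pr(E^c)\le 4\tau^2\epsilon$; for $\tilde K^C_V$, on $E^c$ we have $\tilde K^C_V=0$, so the integrand equals $\norm{K}^2_{L^2_\rho}$ and the term is at most $\epsilon\norm{K}^2_{L^2_\rho}$. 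Everything else is common to both parts.

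On $E$ the discrete semi-norm is a genuine norm on $V$, so $\tilde K^*_V$ is the $\norm{\cdot}_{L^2_{\rho,M}}$-orthogonal projection of $K+\eta$ onto $V$. Writing $r \coloneqq K - K^*_V$ (which satisfies $r\perp V$ in $L^2_\rho$, i.e. $\ip{r,\Phi_j}_{L^2_\rho}=0$) and using that the discrete projection fixes $K^*_V\in V$, the error decomposes as $e \coloneqq \tilde K^*_V - K^*_V$ with coefficient vector $\bs c = \bs G^{-1}\bs b$, where $\bs b_j = \ip{r+\eta,\Phi_j}_{L^2_{\rho,M}}$. The $L^2_\rho$-Pythagorean identity gives $\norm{K - \tilde K^*_V}^2_{L^2_\rho} = \epsilon_V(K)^2 + \norm{e}^2_{L^2_\rho}$, and since $\{\Phi_n\}$ is $L^2_\rho$-orthonormal, $\norm{e}^2_{L^2_\rho} = \norm{\bs c}_2^2$. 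On $E$ one has $\norm{\bs G^{-1}}_2 \le (1-\delta)^{-1}$, hence $\norm{e}^2_{L^2_\rho}\le (1-\delta)^{-2}\norm{\bs b}^2_2$ — this is the step producing the $(1-\delta)^2$ in the denominator (a cruder use of the norm equivalence in \cref{lem:comp_norms} would only give $(1-\delta)^{-1}$). For $\tilde K^T_V$ the truncation toward the $\tau$-ball is non-expansive and fixes $K(f)$ (again by \eqref{eq:tau-choice}), so $\norm{K - \tilde K^T_V}_{L^2_\rho}\le\norm{K-\tilde K^*_V}_{L^2_\rho}$ and the same on-event bound applies to both estimators.

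The crux is bounding $\E[\norm{\bs b}_2^2]$, after which $\E[\norm{e}^2_{L^2_\rho}\mathbb 1_E]\le (1-\delta)^{-2}\E[\norm{\bs b}_2^2]$ by dropping $\mathbb 1_E\le 1$ on the nonnegative integrand (applied only after the deterministic bound $\norm{\bs G^{-1}}_2\le(1-\delta)^{-1}$, since $\bs G$ need not be invertible off $E$). I would split $\bs b = \bs b^{(r)} + \bs b^{(\eta)}$ and use $\norm{\bs b}_2^2 \le 2\norm{\bs b^{(r)}}^2_2 + 2\norm{\bs b^{(\eta)}}^2_2$, the source of the factors of $2$. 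Each entry $\bs b^{(r)}_j = \frac1M\sum_i w(f^i)\ip{r(f^i),\Phi_j(f^i)}_{\mathcal Y}$ is an i.i.d.\ average whose single-sample mean is $\ip{r,\Phi_j}_{L^2_\rho}=0$; hence $\E[\norm{\bs b^{(r)}}^2_2]$ is pure variance, equal to $\frac1M\int w\sum_j\abs{\ip{r,\Phi_j}_{\mathcal Y}}^2\dif\rho$. The Cauchy–Schwarz/Christoffel bound $\sum_j\abs{\ip{r(f),\Phi_j(f)}_{\mathcal Y}}^2 \le \norm{r(f)}^2_{\mathcal Y}\sum_j\norm{\Phi_j(f)}^2_{\mathcal Y}$ together with $w(f)\sum_j\norm{\Phi_j(f)}^2_{\mathcal Y} = \kappa_w(f)\le\kappa_w^\infty$ then yields $\E[\norm{\bs b^{(r)}}^2_2]\le \frac{\kappa_w^\infty}{M}\norm{r}^2_{L^2_\rho} \le \gamma(N)\epsilon_V(K)^2$, where the last step uses $\kappa_w^\infty/M\le \gamma(N)$ from the sample-complexity hypothesis. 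For the noise entries $\bs b^{(\eta)}_j$ the single-sample mean $\ip{\eta,\Phi_j}_{L^2_\rho}$ is generally nonzero, so the second moment picks up both a variance term (bounded by $\gamma(N)\norm{\eta}^2_{L^2_\rho}$ exactly as above) and the mean-square term $\sum_j\abs{\ip{\eta,\Phi_j}_{L^2_\rho}}^2 = \norm{\Pi_V\eta}^2_{L^2_\rho}\le\norm{\eta}^2_{L^2_\rho}$, giving $\E[\norm{\bs b^{(\eta)}}^2_2]\le(\gamma(N)+1)\norm{\eta}^2_{L^2_\rho}$.

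Assembling, $\E[\norm{e}^2_{L^2_\rho}\mathbb 1_E]\le \frac{2\gamma(N)}{(1-\delta)^2}\epsilon_V(K)^2 + \frac{2(\gamma(N)+1)}{(1-\delta)^2}\norm{\eta}^2_{L^2_\rho}$, and adding $\E[\epsilon_V(K)^2\mathbb 1_E]\le\epsilon_V(K)^2$ from the Pythagorean identity together with the respective off-event term yields both claimed bounds. I expect the main obstacle to be the moment computation in the previous paragraph: one must correctly exploit the $L^2_\rho$-orthogonality $r\perp V$ to replace an $O(1)$ contribution by the variance-scaled factor $\kappa_w^\infty/M\le\gamma(N)$ (this is the only place the projection error gains the small factor), and keep the conditioning on $E$ clean so that $\bs G^{-1}$ is never applied off the event where it may fail to exist.
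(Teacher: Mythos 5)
Your proposal is correct and follows essentially the same route as the paper's proof: conditioning on the stability event $\{\norm{\bs G - \bs I}_2 \le \delta\}$, the Pythagorean split into $\epsilon_V(K)^2 + \norm{e}^2_{L^2_\rho}$, the $(1-\delta)^{-2}$ factor from $\norm{\bs G^{-1}}_2 \le (1-\delta)^{-1}$, and the mean/variance decomposition of the discrete inner products that exploits $r \perp V$ to gain the factor $\kappa_w^\infty/M \le \gamma(N)$ on the projection-error term while the noise term keeps the extra $\norm{\Pi_V \eta}^2_{L^2_\rho} \le \norm{\eta}^2_{L^2_\rho}$ contribution. (Incidentally, your treatment of the truncation step---non-expansiveness of the metric projection onto the $\tau$-ball together with $T_\tau(K(f)) = K(f)$---is actually cleaner than the paper's, which asserts the identity $Kf - T_\tau(\tilde K^*_V f) = T_\tau\left(Kf - \tilde K^*_V f\right)$ that does not hold literally, though both arguments yield the same inequality.)
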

\begin{rem}
In the noiseless case $\eta = 0$, the factor of $2$ in front of $\gamma(N)$ can be removed. 
\end{rem}
See Appendix \ref{Pf: Proof of L2 Err in Expectation} for the proof. 

\subsubsection{Approximation of the Discretized Problem}
Somewhat more general results can be obtained under a $\mathcal{Y}$-discretization model. As mentioned in the introduction, a key application area for this discrete least squares theory lies in the approximation of solution operators to PDEs. Crucially, we are interested in approximations that can be \emph{computed}---something that cannot be accomplished completely for operators $K$ taking values in an infinite-dimensional Hilbert space. Though the noise $\eta$ may in principle represent such discretization error, it is more natural to associate this noise to discretization error in the input space $\mathcal X$. Here we provide a more careful analysis  which explicitly accounts for discretization error in the output space $\mathcal Y$. See \cite{adcock2022towards} for similar results.  

Let $\mathcal Y_h$ be a finite dimensional subspace of $\mathcal Y$. Restricting the admissible set of operators to $L^2_\rho(\mathcal X; \mathcal Y_h) \subset L^2_\rho(\mathcal X;\mathcal Y)$, the  solution to the discrete least squares problem \eqref{eq:discrete-ls} is given by
\begin{align}\label{eq:trunc_disc_LS}
    \tilde{K}_{V_h} \in \argmin_{A\in V_h} \{\norm{K + \eta - A}_{L^2_{\rho,M}}\},
\end{align}
where $V_h$ is an $N$ dimensional subspace of $L^2_\rho(\mathcal X; \mathcal Y_h).$ We note that the $L^2_{\rho,M}$-norm \eqref{eq:discrete-ls} and semi-inner product \eqref{eq:discrete_IP} remain unchanged. Finally, for $K\in L^2_\rho(\mathcal X; \mathcal Y),$ we define
$\Pi_{\mathcal{Y}_h}K\in L^2_\rho(\mathcal X;\mathcal Y_h)$ by the almost everywhere defined operator
$$(\Pi_{\mathcal{Y}_h}K)(f) = \Pi_{\mathcal{Y}_h}(K(f)),$$
where $\Pi_{\mathcal{Y}_h}$ is the orthogonal projector of $\mathcal Y$ onto $\mathcal Y_h$. 

\begin{thm}[Truncated Approximation Error in Probability, cf. \cite{adcock2022towards}]\label{thm: Truncated Approx Error in Prob}
    Let $\{f^i\}_{i\in [M]} \subset \mathcal X$, and let $w: \mathcal X\to \R_+$ be any functional for which $w(f^i)$ is finite for all $i\in [M].$ If there exists a $\delta \in  (0,1)$ such that $(1-\delta)\norm{\cdot}_{L^2_\rho}^2 \le \norm{\cdot}_{L^2_{\rho,M}}$ on a subspace $V_h \subset L^2_\rho(\mathcal X; \mathcal Y_h),$ then the approximation $\tilde{K}_{V_h}$ given by \eqref{eq:trunc_disc_LS} is unique and satisfies
    \begin{equation}
    \begin{aligned}
    \norm{K-\tilde{K}_{V_h}}_{L^2_\rho(\mathcal X;\mathcal Y)} &\le \inf_{A\in V_h}\left\{\norm{K-A}_{L^2_\rho(\mathcal X;\mathcal Y) } +\frac{1}{\sqrt{1-\delta}}\norm{K-A}_{L^2_{\rho,M}}\right\}\\
    &\hspace{0.5cm} +\norm{K-\Pi_{\mathcal{Y}_h}K}_{L^2_\rho(\mathcal X; \mathcal Y)} + \frac{1}{\sqrt{1-\delta}}\norm{\eta}_{L^2_{\rho,M}}.
    \end{aligned}
    \end{equation}
\end{thm}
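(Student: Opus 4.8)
The plan is to recognize $\tilde K_{V_h}$ as an orthogonal projection in the discrete semi-inner product $\ip{\cdot,\cdot}_{L^2_{\rho,M}}$ and then run a quasi-optimality (C\'ea-type) argument, with the one wrinkle that $K$ is $\mathcal Y$-valued while $V_h$ is $\mathcal Y_h$-valued, which is exactly what forces the additive output-discretization term.

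First I would dispatch uniqueness. The stability hypothesis $(1-\delta)\norm{A}_{L^2_\rho}^2 \le \norm{A}_{L^2_{\rho,M}}^2$ on $V_h$ with $\delta<1$ shows that $\norm{\cdot}_{L^2_{\rho,M}}$ is a genuine norm (not merely a seminorm) on $V_h$: if $\norm{A}_{L^2_{\rho,M}}=0$ then $\norm{A}_{L^2_\rho}=0$, so $A=0$. Hence the restriction of the discrete Gram form to $V_h$ is positive definite, the objective in \eqref{eq:trunc_disc_LS} is strictly convex, and the minimizer $\tilde K_{V_h}$ is unique; equivalently $\tilde K_{V_h}=\Pi^M_{V_h}(K+\eta)$, the $\ip{\cdot,\cdot}_{L^2_{\rho,M}}$-orthogonal projection of the data onto $V_h$. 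In particular the residual $(K+\eta)-\tilde K_{V_h}$ is $L^2_{\rho,M}$-orthogonal to $V_h$, which by the Pythagorean identity in the semi-inner-product space makes the projection a contraction: $\norm{\tilde K_{V_h}-A}_{L^2_{\rho,M}}\le \norm{(K+\eta)-A}_{L^2_{\rho,M}}$ for every $A\in V_h$. Using this contraction directly, rather than a two-sided optimality comparison, is what keeps the constant in front of $\norm{K-A}_{L^2_{\rho,M}}$ equal to $(1-\delta)^{-1/2}$ instead of twice that.

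Next I would split the error through the output-space projection,
$$\norm{K-\tilde K_{V_h}}_{L^2_\rho}\le \norm{K-\Pi_{\mathcal Y_h}K}_{L^2_\rho}+\norm{\Pi_{\mathcal Y_h}K-\tilde K_{V_h}}_{L^2_\rho},$$
isolating the output-discretization term. For the second summand I fix an arbitrary $A\in V_h$, insert it, and bound
$$\norm{\Pi_{\mathcal Y_h}K-\tilde K_{V_h}}_{L^2_\rho}\le \norm{\Pi_{\mathcal Y_h}K-A}_{L^2_\rho}+\norm{A-\tilde K_{V_h}}_{L^2_\rho}.$$
Since $A-\tilde K_{V_h}\in V_h$, the lower norm bound gives $\norm{A-\tilde K_{V_h}}_{L^2_\rho}\le (1-\delta)^{-1/2}\norm{A-\tilde K_{V_h}}_{L^2_{\rho,M}}$, and the contraction from the previous step together with the triangle inequality yields $\norm{A-\tilde K_{V_h}}_{L^2_{\rho,M}}\le \norm{K-A}_{L^2_{\rho,M}}+\norm{\eta}_{L^2_{\rho,M}}$. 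Finally, because $A$ is $\mathcal Y_h$-valued we have $\Pi_{\mathcal Y_h}A=A$, so pointwise contractivity of the orthogonal projector $\Pi_{\mathcal Y_h}$ on $\mathcal Y$ gives $\norm{\Pi_{\mathcal Y_h}K-A}_{L^2_\rho}=\norm{\Pi_{\mathcal Y_h}(K-A)}_{L^2_\rho}\le \norm{K-A}_{L^2_\rho}$. Assembling these estimates and taking the infimum over $A\in V_h$ produces exactly the stated bound.

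I expect the only genuinely delicate points to be bookkeeping: verifying the Pythagorean/contraction property in the \emph{semi}-inner-product space $L^2_{\rho,M}$ (which needs both the residual orthogonality and positive-definiteness of the form on $V_h$, each secured by the stability hypothesis), and tracking which objects live in $\mathcal Y_h$ versus $\mathcal Y$ so that $\Pi_{\mathcal Y_h}A=A$ may legitimately be used. Everything else reduces to triangle inequalities and a single invocation of the lower half of the norm equivalence from Lemma \ref{lem:comp_norms}. A more direct argument that inserts $A$ before passing to $\Pi_{\mathcal Y_h}K$ would actually yield a slightly sharper estimate lacking the additive $\norm{K-\Pi_{\mathcal Y_h}K}_{L^2_\rho}$ term, but routing through $\Pi_{\mathcal Y_h}K$ cleanly separates the in-space best-approximation error from the cost of discretizing the output and gives the stated form.
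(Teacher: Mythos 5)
Your proof is correct and follows essentially the same C\'ea-type argument as the paper: uniqueness from positive-definiteness of the discrete form on $V_h$, a quasi-optimality bound in the discrete semi-norm via Galerkin orthogonality, the stability hypothesis to pass to the continuous norm, and a triangle-inequality split through $\Pi_{\mathcal Y_h}K$ together with non-expansiveness of the projector. The only cosmetic difference is that you invoke the contraction property of the discrete projection against $K+\eta$ directly (so non-expansiveness of $\Pi_{\mathcal Y_h}$ is needed only in the continuous norm), whereas the paper first replaces $K$ by $\Pi_{\mathcal Y_h}K$ inside the discrete inner product and then uses non-expansiveness in both the discrete and continuous norms; the two routes produce identical final bounds.
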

The above theorem shows that the error in \eqref{eq:discrete-ls} can be decomposed into three terms: The first is a best-approximation error in $V_h$, the second is a discretization error arising from the restriction to the subspace $\mathcal Y_h$, and the third is an error due to noise. The proof may be found in Appendix \ref{Pf: Trunc Approx Error in Prob}. We emphasize that the above result holds for \emph{any} collection of points $\{f^i\}_{i\in [M]} \subset \mathcal X$ and $\emph{any}$ well-defined weight functional $w$, provided that the stability condition holds. Of course, stability can be guaranteed with high probability provided that $(f^i, w, M)$ are taken as in Corollary \ref{cor:OptStab and Sample Complexity Estiamte}.

\section{Operator Basis Construction and Optimal Sampling Procedures}\label{sec: Construct Approx Space}
In this section we prescribe explicit constructions for approximation spaces $V\subset L^2_\rho(\mathcal X;\mathcal Y)$ and explicit procedures for sampling from the optimal measures \eqref{eq:opt weight and measure} associated to these spaces. 
Explicit procedures cannot be expected for arbitrarily general classes of measures, so we assume that \(\rho\) is a product measure to ensure feasibility.
Furthermore, we divide our analysis into two cases: approximation using linear operators and approximation using nonlinear operators. 

\subsection{Setup and Assumptions on \(\rho\)}

Let \(\{\xi_j\}_{j \in \N}\) and \(\{\psi_j\}_{j \in \N}\) be orthonormal bases of \(\mathcal{X}\) and \(\mathcal{Y}\), respectively. The functions \(\xi_j\) are easier to prescribe, as we assume access to knowledge of the input measure \(\rho\) on \(\mathcal{X}\). While the choice of \(\psi_j\) is, in principle, arbitrary, effective approximation requires that the \(\psi_j\) be selected to represent the output functions well.

For now, we focus on the impact of the choice of \(\xi_j\). By Theorem \ref{thm: prop_rand_H_Vars}, the approximation measure \(\rho\) on \(\mathcal{X}\) is fully characterized by the distribution of the \(\rho\)-random generalized Fourier coefficients
\begin{align*}
  f \sim \rho \hskip 10pt \Longrightarrow \hskip 10pt \hat{f}_j \coloneqq\ip{\xi_j, f}_{\mathcal{X}}  \eqqcolon \xi^*_jf \sim \dif \rho_j,
\end{align*}
where $\xi_n^* : \mathcal{X} \rightarrow \C$ is the linear functional for which $\xi_n$ is the Riesz representer, and $\rho_j$ is the $\xi_j$-marginal of $\rho$: that is, the full distribution of $(\hat{f}_1, \hat{f}_2, \ldots )^{\top} \in \R^\infty$ fully characterizes $\rho$. 

Throughout, we assume that 
\begin{align}\label{assump: sq_sum of var} 
\sum_{j \in \N} \sigma^2_j < \infty, \hspace{0.5cm} \text{where} \hspace{0.5cm} \sigma^2_j \coloneqq \rho_{j,2}
\end{align}
denotes the second (uncentered) moment of \(\rho_j\). This condition ensures that \(f \sim \rho\) has a mean in \(\mathcal{X}\) and finite variance on \(\mathcal{X}\). We further assume that \(\sigma_j > 0\) for every \(j\). If any \(\sigma_j = 0\), the corresponding \(\xi_j\) can be removed from the basis without affecting any \(L^2\)-type results under the measure \(\rho\). When the collection \(\hat{f}_j\) is uncorrelated, we have $\mathrm{Cov}(\hat{f}_i, \hat{f}_j) = \sigma_i^2 \delta_{i,j}$. 
Even if the covariance is non-diagonal and known, diagonalization can be achieved numerically through a decorrelation transform applied to \(\hat{f}_j\).

To make computations more feasible, we impose a stronger independence condition on the generalized Fourier coefficients by assuming that \(\rho\) is a product measure:
\begin{align}\label{def: product meas rho} 
\dif \rho = \bigotimes_{j \in \N} \dif \rho_j,
\end{align}
where each \(\rho_j\) is considered as a measure on \(\R\), and \(\rho\) is formally treated as operating on \(\R^\infty\), the infinite \(\ell^2\) sequence of Fourier coefficients of \(f\). Under this assumption, the collection \(\{\hat{f}_j\}_{j \in \N}\) is independent. 

We slightly abuse notation by making the identification:
\begin{align*}
(\mathcal{X}, \ip{\cdot, \cdot}_{\mathcal{X}}, \rho) \cong (\ell^2(\N), \ip{\cdot, \cdot}_{\ell^2}, \bigotimes_{j \in \N} \dif \rho_j),
\end{align*}
even though the left-hand side measures sets in \(\mathcal{X}\), while the right-hand side measures sets in \(\ell^2(\N)\). 
This identification allows us to explicitly represent \(f \in \mathcal{X}\) by \(\hat{f} \in \ell^2\) under the linear isometry \(\xi_j \mapsto e_j\). However, we emphasize that this product structure is not required for any theoretical results and is assumed purely for computational convenience.

\subsection{Linear Operators}
In this paper, we will choose candidate subspaces \(V\) for linear operators that are spanned by basis elements of the form:
\begin{align}\label{eq:Phin-linear}
  \Phi_{\bs n} \coloneqq \sigma_{n_1}^{-1}\psi_{n_2}\otimes \xi_{n_1} = \sigma_{n_1}^{-1}\ip{\xi_{n_1},\cdot}_{\mathcal{X}} \psi_{n_2} = \sigma_{n_1}^{-1}\psi_{n_2}\xi_{n_1}^*,
\end{align}
where $\bs{n} = (n_1, n_2) \in \N^2$ is some given multi-index. The linear operators $\Phi_{\bs{n}}$ are the analogue of rank-one matrices: they extract the index-$n_1$ Fourier coefficient of the input, scale it by $\sigma_{n_1}^{-1}$ for normalization, and this forms the index-$n_2$ Fourier coefficient for the output.  The approximation space $V$ we consider in this section is the $N$-dimensional subspace given by,
\begin{align}\label{eq:V-linear}
  V = \mathrm{span}\left\{ \Phi_{\bs n} \;\;\big|\;\; {\bs n} \in \Lambda \right\},
\end{align}
where $\Lambda \subset \N_0^2$ satisfies $|\Lambda| = N$.

\subsubsection{Density of Finite-Rank Projection Operators}\label{sec:lin_sub_w_prod_meas}
Since initially we will discuss using the specific class of linear operators \eqref{eq:Phin-linear}, it is natural to ask whether this set of candidates can accurately approximate a general, say bounded, linear operator.  
A linear approximation could be formed by truncating the input:
\begin{gather*}
  (K\Pi_n)f \coloneqq \sum_{i\in [n]} \hat{f}_i K\xi_i, \hskip 10pt K \in \mathfrak{B}(\mathcal{X}; \mathcal{Y}) \\ 
  \mathfrak{B}(\mathcal{X}; \mathcal{Y}) \coloneqq \left\{ K \in \mathcal{L}(\mathcal{X}, \mathcal{Y}) \;\;\big|\;\; \exists\; C \in [0, \infty) \textrm{ such that } \left\| K x \right\|_{\mathcal{Y}} \leq C \|x\|_{\mathcal{X}} \;\forall\; x \in \mathcal{X}\right\}
\end{gather*}
where $\mathcal{L}(\mathcal{X},\mathcal{Y})$ denotes the collection of linear operators from $\mathcal{X}$ to $\mathcal{Y}$, and $\Pi_n$ is the orthogonal projection onto $\mathrm{span}\{\xi_i\}_{i\in [n]}$. This particular linear approximation is salient since it lies in the subspace $V$ defined in \eqref{eq:V-linear} if we choose both $\{\psi_j\}$ and $\Lambda$ appropriately.

The main question we consider is whether such a truncation of $K$ to $K \Pi_n$ is sensible in our $L^2(\mathcal{X};\mathcal{Y})$ setting. We formalize this in the following result, the proof of which may be found in \ref{Pf: Bochner_density_linear}.

\begin{thm}[Bochner Norm Approximation by Finite-Rank Projections]\label{thm: Bochner density Finite Rank in Linear Op}
Let $\mathcal{X}$ and $\mathcal{Y}$ be separable Hilbert spaces, and let $\rho$ be a Borel probability measure on $\mathcal{X}$ satisfying
$$\int_{\mathcal{X}} \|f\|_{\mathcal{X}}^2 \, \dif \rho(f) < \infty.$$
Let $\{\xi_j\}_{j=1}^\infty \subset \mathcal{X}$ be a complete orthonormal system, and let $\Pi_n : \mathcal{X} \to \mathcal{X}$ denote the orthogonal projection onto $\mathrm{span} \{\xi_1, \dots, \xi_n\}$. Then for any bounded linear operator $K \in \mathfrak{B}(\mathcal{X}; \mathcal{Y})$, we have
$$\lim_{n \to \infty} \|K - K \Pi_n\|_{L^2_\rho(\mathcal{X}; \mathcal{Y})} = 0.$$
\end{thm}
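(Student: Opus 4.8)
The plan is to reduce the claim to a pointwise convergence statement and then invoke dominated convergence. First I would expand the squared Bochner norm using linearity of $K$:
\[
  \norm{K - K\Pi_n}_{L^2_\rho(\mathcal{X};\mathcal{Y})}^2 = \int_{\mathcal{X}} \norm{K(f - \Pi_n f)}_{\mathcal{Y}}^2 \dif \rho(f).
\]
The integrand is Borel measurable, since $K$ and $\Pi_n$ are continuous (hence Borel) and $\norm{\cdot}_{\mathcal{Y}}$ is continuous, so the composition $f \mapsto \norm{K(f-\Pi_n f)}_{\mathcal{Y}}^2$ is a genuine element whose integral is well defined.

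Second, I would establish that the integrand tends to zero pointwise. Because $\{\xi_j\}_{j\in\N}$ is a \emph{complete} orthonormal system, $\Pi_n f \to f$ in $\mathcal{X}$ for every fixed $f$ (this is exactly completeness/Parseval), and since $K \in \mathfrak{B}(\mathcal{X};\mathcal{Y})$ is bounded and therefore continuous, $K\Pi_n f \to Kf$ in $\mathcal{Y}$. Hence $\norm{K(f - \Pi_n f)}_{\mathcal{Y}}^2 \to 0$ for each $f \in \mathcal{X}$.

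Third — and this is the only step requiring any care — I would justify interchanging the limit with the integral by producing an $n$-independent integrable dominating function. Writing $C$ for the operator-norm bound appearing in the definition of $\mathfrak{B}(\mathcal{X};\mathcal{Y})$, I would use
\[
  \norm{K(f - \Pi_n f)}_{\mathcal{Y}}^2 \le C^2 \norm{f - \Pi_n f}_{\mathcal{X}}^2 \le C^2 \norm{f}_{\mathcal{X}}^2,
\]
where the second inequality follows because $I - \Pi_n$ is itself an orthogonal projection and hence a contraction (equivalently, $\norm{f}^2_{\mathcal{X}} = \norm{\Pi_n f}^2_{\mathcal{X}} + \norm{f - \Pi_n f}^2_{\mathcal{X}}$ by Pythagoras). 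The dominating function $C^2\norm{f}_{\mathcal{X}}^2$ is independent of $n$ and integrable by the hypothesis $\int_{\mathcal{X}} \norm{f}_{\mathcal{X}}^2 \dif \rho(f) < \infty$. The dominated convergence theorem then gives
\[
  \lim_{n\to\infty}\norm{K - K\Pi_n}_{L^2_\rho}^2 = \int_{\mathcal{X}} \lim_{n\to\infty}\norm{K(f - \Pi_n f)}_{\mathcal{Y}}^2 \dif \rho(f) = 0.
\]

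The main obstacle is essentially just locating this uniform integrable majorant, which the contraction property of orthogonal projections supplies immediately; no delicate estimates are needed. It is worth noting that the two quantitative hypotheses each play a distinct role: boundedness of $K$ delivers both the continuity used for pointwise convergence and the operator-norm bound in the dominator, while the finite second moment of $\rho$ guarantees integrability of the dominator. Completeness of $\{\xi_j\}$ is what drives the pointwise limit. I would close by remarking that boundedness of $K$ is used only to pass to continuity and the norm bound, so the same argument would extend to any continuous $K$ obeying a pointwise growth bound $\norm{Kf}_{\mathcal{Y}} \lesssim 1 + \norm{f}_{\mathcal{X}}$ compatible with the moment assumption.
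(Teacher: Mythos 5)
Your proposal is correct and follows essentially the same argument as the paper's proof: pointwise convergence $\Pi_n f \to f$ from completeness, the bound $\|K(f-\Pi_n f)\|_{\mathcal{Y}}^2 \le \|K\|^2\|f\|_{\mathcal{X}}^2$ via boundedness of $K$ and the contraction property of $I-\Pi_n$, and dominated convergence using the finite second moment of $\rho$. The only cosmetic difference is that the paper first factors out $\|K\|_{\mathfrak{B}}^2$ and applies dominated convergence to $\int_{\mathcal{X}}\|f-\Pi_n f\|_{\mathcal{X}}^2\dif\rho(f)$, whereas you apply it directly to the full integrand; this is immaterial.
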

The density result above is a universal approximation statement for target \textit{linear} operators.  Many operators of interest -- including integral kernel operators -- are Hilbert-Schmidt and may be well approximated in such a space. However, a subspace $V$ comprised of linear operators as in \eqref{eq:V-linear} may do a poor job of approximating a highly non-linear operator in $L^2_\rho(\mathcal X; \mathcal Y)$. 

\subsubsection{Sampling}\label{sssection: Linear Approximation space}

As shown above, \(\text{span}\{\psi_{n_2} \otimes \xi_{n_1}\}_{\bs{n} \in \N^2}\) is \(L^2_\rho\)-dense in \(\mathfrak{B}(\mathcal{X}; \mathcal{Y})\). Therefore, the subspaces \(V\) defined as in \eqref{eq:V-linear} are sensible candidates for approximating linear operators. To ensure orthonormality of the basis functions \(\Phi_{\bs{n}}\), 
we require that the \(\hat{f}_j\) are uncorrelated and centered (mean-zero), and that \(\{\psi_j\}_{j \in \N}\) are $\mathcal{Y}$-orthonormal. Under these conditions, the basis functions \(\Phi_{\bs{n}}\) are orthonormal.
\begin{align*}
   \ip{\Phi_{\bs n}, \Phi_{\bs m}}_{L^2_\rho(\mathcal X; \mathcal Y)} & = \E_{f\sim \rho} \ip{\Phi_{\bs n}f, \Phi_{\bs m}f}_{\mathcal Y} %\\
   = \E \ip{\sigma_{n_1}^{-1}\psi_{n_2} \hat{f}_{n_1}, \sigma_{m_1}^{-1}\psi_{m_2} \hat{f}_{m_1}}_{\mathcal Y} = \delta_{\bs n, \bs m}
\end{align*}
This orthonormality property can be exploited for efficient sampling of the optimal measure associated with \(V\) in \eqref{eq:V-linear}.

These orthonormal bases enable the construction of an efficient and straightforward sampling procedure.
To achieve this, we recall that \(\rho\) is assumed to be a product measure with marginals \(\rho_j\) associated with the random variables \(\hat{f}_j\) and define a modified class of univariate probability measures \(\tilde{\rho}_j\) with densities given by:
\begin{align}\label{eq:lin-ind-density}
\dif \tilde{\rho}_j(\hat{f}_j) = \frac{\hat{f}_j^2}{\sigma_j^2} \dif \rho_j(\hat{f}_j).
\end{align}
Next, using the explicit representation \eqref{eq:opt weight and measure} of the optimal measure \(\mu\), we express \(\mu\) as:
\begin{align*}
\dif \mu(f) &= \frac{1}{N} \sum_{\bs{n} \in \Lambda} \left\| \Phi_{\bs{n}} f \right\|_{\mathcal{Y}}^2 \dif \rho(f) 
= \frac{1}{N} \sum_{\bs{n} \in \Lambda} \frac{1}{\sigma^2_{n_1}} |\hat{f}_{n_1}|^2 \bigotimes_{j \in \N} \dif \rho_j(\hat{f}_j) \\
&= \frac{1}{N} \sum_{\bs{n} \in \Lambda} \dif \tilde{\rho}_{n_1}(\hat{f}_{n_1}) \dif \rho_{\backslash n_1}(\hat{f}_{\backslash n_1}),
\end{align*}
where \(\rho_{\backslash j}\) denotes the measure associated with \(\bigotimes_{k \in \N \backslash \{j\}} \dif \rho_k\), and \(\hat{f}_{\backslash j} = \left(\hat{f}_k\right)_{k \in \N \backslash \{j\}}\).

Finally, to further simplify the structure of \(\mu\), we define weights \(v_j = v_j(\Lambda)\), revealing that \(\mu\) is a mixture of measures:
\begin{align*}
v_j &\coloneqq \frac{\left| \left\{\bs{n} \in \Lambda \;\big|\; n_1 = j \right\}\right|}{|\Lambda|}, &
\dif \mu(f) &= \sum_{n_1 \in \Pi_1\Lambda} v_{n_1} \left(\dif \tilde{\rho}_{n_1}(\hat{f}_{n_1}) \dif \rho_{\backslash n_1}(\hat{f}_{\backslash n_1})\right).
\end{align*}

Since \(\sum_j v_j = 1\), \(\dif \mu\) is a mixture of measures, where each individual measure is a product of univariate measures. This structure enables efficient sampling, and a detailed sampling method is provided in \cref{sec:approx_induced_samp_LS}.

\subsubsection{Relationship to Kernel Learning}\label{ssec: lin_kernel_learning}
Using linear subspaces $V$ as described in this section is equivalent to a particular type of kernel learning. Let $\mathcal{X}$ be a Hilbert space of functions over domain $\Omega_x$ with respect to measure $\nu$, and $\mathcal{Y}$ be a Hilbert space of functions over domain $\Omega_y$ with respect to measure $\eta$. Formally:
\begin{align}\label{eq:func_space_defs}
  \mathcal{X} &= H_\nu(\Omega_x) \textrm{ over measure space } (\Omega_x,\mathscr B(\Omega_x, \norm{\cdot}_{\Omega_x}), \nu), \\
  \mathcal{Y} &= H_\eta(\Omega_y) \textrm{ over measure space } (\Omega_y , \mathscr B(\Omega_y, \norm{\cdot}_{\Omega_y}), \eta)
\end{align}
For simplicity, let both function spaces to be equipped with the standard $L^2$ inner product, so that
\begin{align}\label{eq:linear-kernel-operator}
  (\Phi_{\bs{n}}(f))(y) &= \int_{\Omega_x}k_{\bs n}(x,y)f(x)\dif \nu(x), &
  k_{\bs n}(x,y) &= \sigma^{-1}_{n_1}\psi_{n_2}(y)\xi_{n_1}(x), & y &\in \Omega_y.
\end{align}
Then our least squares problem \eqref{eq:discrete-ls} is equivalent to the  kernel learning problem
\begin{align*}
  \argmin_{k \in K_{\Lambda}} \frac{1}{M}\sum_{i\in [M]}w(f^i)\int _{\Omega_y}\left(g^i(y) - \int_{\Omega_x} k(x,y) f^i(x) \dif \nu(x)\right)^2\dif \eta(y),
\end{align*}
with $g^i$ as in \eqref{eq:data_model},
which selects a kernel from the hypothesis class,
\begin{align*}
  K_\Lambda = \left\{ \sum_{{\bs n} \in \Lambda} c_{\bs n} k_{\bs n}(x,y) \;\;\big|\;\; (c_{\bs n})_{{\bs n} \in \Lambda} \subset \R^N \right\}.
\end{align*}

\subsection{Nonlinear Operators}

To prepare for the construction of nonlinear operators, we build on the linear kernel operators introduced in the previous section. From this point onward, we make specific choices for $\mathcal{X}$ and $\mathcal{Y}$ as follows:
\begin{align*}
  \mathcal X = L^2_\nu(\Omega_x; \R) \hspace*{0.5cm}\text{and}\hspace*{0.5cm} \mathcal Y = L^2_\eta(\Omega_y;\R),
\end{align*}

\subsubsection{A First Attempt: Multilinear Operators}
A natural candidate for generalization of \eqref{eq:linear-kernel-operator} would be the following type of nonlinear kernel operator,
\begin{align}\label{eq:multilinear-operator}
  (M_k f)(y) &= \int_{\times^{r}\Omega_x} \prod_{j=1}^r f(x_j)k(\mathbf x, y)\bigotimes_{j=1}^r \dif \nu( x_j),  &
  {\mathbf x} &= (x_1, \ldots, x_r)^{\top} \in \Omega_x^r, & 
  r &\in \N.
\end{align}
For $r = 1$, these are linear kernel operators like those in \eqref{eq:linear-kernel-operator}. We refer to $M_k$ as a \textit{multilinear kernel operator} because it is a multilinear function of the entries in the $r$-fold vector duplication $(f(x_1), \ldots, f(x_r))$.  One can show that if $k\in L^2_{\bigotimes^r\nu \otimes \eta}(\times^r\Omega_x \times \Omega_y),$ then $M_k \in L^2_\rho(\mathcal X;\mathcal Y)$; see e.g., Section 3.5 of \cite{sullivan2015introduction}. Hence, to identify a precise form for the multilinear kernel $k$ that is a direct generalization of the one in \eqref{eq:linear-kernel-operator}, we introduce the set of infinite multi-indices with finite support:
\begin{align}\label{eq: inf_multi_idx_finite_supp}
  \N_F^\infty &\coloneqq \left\{ \bs{n} = (n_0, n_1, n_2, \ldots )^{\top} \in \N_0^\infty \;\;\big|\;\; n_0 \in \N \textrm{ and } \sum_{j \in \N} n_j < \infty \right\}.
\end{align}
For a multi-index $\bs n$, we denote $\bs n_{\sim 0}$ the multi-index with the first element truncated: i.e., $(n_1, n_2, \dots).$
Then for $\bs{n} \in \N_F^\infty$ with $r = \|\bs{n}_{\sim 0}\|_0$, we identify $k_{\bs{n}}$ as an $r$-fold multilinear kernel,
\begin{align}\label{eq:kn-multilinear}
  k_{\bs n}(\mathbf x,y) = \psi_{n_0}(y) \prod_{j \in \N} \prod_{q = 1 + \sum_{k \in [j-1]} n_k}^{n_j +\sum_{k \in [j-1]} n_k} \xi_{j}(x_q) \hskip 5pt \Longrightarrow \hskip 5pt 
  (M_{k_{\bs n}}f)(y) = \psi_{n_0}(y)\prod_{j \in \N} \hat{f}_j^{n_j}
\end{align}
For example, if $\bs{n} = (3, 2, 0, 3, 0, 0, \ldots)$, then $(M_{k_{\bs n}}f)(y) = \psi_3(y) \hat{f}_1^2 \hat{f}_3^3$. We can then fix a finite set $\Lambda \subset \N_F^\infty$ and define a subspace of \textit{nonlinear} operators $V = \mathrm{span}\left\{ M_{k_{\bs{n}}}\right\}_{\bs{n} \in \Lambda}$. 

This construction in principle achieves what we desired: an explicit and computationally encodable set of (polynomial) nonlinear operators. However, sampling from the optimal measure is quite difficult with this construction. In the next section, we introduce a simpler strategy that avoids these requirements and constructs an orthogonal basis for essentially the same class of multilinear kernel operators. 

\subsubsection{Rank-One Orthogonal Polynomial Operators}
To address the challenges of orthogonalizing the multilinear polynomial operators introduced in the previous section, we reformulate these operators to be inherently orthogonal. Since the operators defined in \eqref{eq:multilinear-operator} with $k=k_{\bs{n}}$ depend on a finite number of Fourier coefficients, orthogonal polynomials in finitely many variables naturally arise as a key tool in this reformulation.

To construct these orthogonal polynomials, we continue to assume the measure $\rho$ on $\mathcal{X}$ has product structure as in \eqref{def: product meas rho}, but now make the additional assumption that for every $j \in \N$, $\dif {\rho}_j$ has finite moments of all orders, and that the cumulative distribution function $x \mapsto \rho_j((-\infty, x])$ has an infinite number of points of increase on $\R$. Under these conditions, there exists a family of univariate polynomials, $\{p_n^j\}_{n\in \N}$, satisfying 
\begin{align}\label{eq:op-def}
  \ip{p_n^j, p_m^j}_{L^2_{\rho_j}(\R)} &= \delta_{n,m}, & \deg p_n^j &=n, & p_n^j(x) &= \sum_{k \in [n]_0} c^j_{n,k} x^k, & c^j_{n,n} &> 0.
\end{align}
Note that $j$ in $p^j_{n}$ and $c^j_{n,k}$ is an index and not an exponent. Since $\rho_j$ is a probability measure, then $p_0^j \equiv 1$ for every $j$. Given a multi-index $\bs \alpha \in \N_0^\infty$ with $|\bs \alpha| < \infty$, we define,
\begin{align}\label{def: multivar PolyOP}
  P_{\bs \alpha}(f) &=\psi_{ \alpha_0}\prod_{j\in \N} p_{ \alpha_j}^j(\hat{f}_j), & \deg P_{\bs \alpha} &\coloneqq \sum_{j \in \N} |\alpha_j|,
\end{align} 
which is a nonlinear operator mapping $f \in \mathcal{X}$ to a scaled multiple of $\psi_{\alpha_0} \in \mathcal{Y}$. A straightforward computation shows that 
  $\ip{P_{\bs \alpha}, P_{\bs \beta}}_{L^2_\rho(\mathcal X; \mathcal Y)} = \delta_{\bs \alpha,\bs \beta}$, 
so that $\{P_{\bs \alpha}\}_{|\bs \alpha| < \infty}$ is an orthonormal set of operators. Hence, given $\Lambda \subset \N_F^\infty$ with $|\Lambda| = N$, we can define $V$ as an $N$-dimensional space of nonlinear (polynomial) operators with an easily identifiable orthonormal basis: 
\begin{align}\label{def: nonlinear approx space}
  V &= \mathrm{span}\{P_{\bs\lambda}\;\big|\; \bs \lambda \in \Lambda\}, &
  P_{\bs \alpha} &= \sum_{\substack{\bs \beta \in \N_F^\infty \\ \beta_0 =  \alpha_0, \bs \beta_{\sim 0} \leq  \bs \alpha_{\sim 0}}} c_{\bs \alpha,\bs \beta} M_{k_{\bs \beta}}, & 
  c_{\bs \alpha,\bs \beta} &= \prod_{j \in \N} c^j_{ \alpha_j, \beta_j},
\end{align}
where the last two equations above result by inspecting \eqref{def: multivar PolyOP}, \eqref{eq:op-def}, and \eqref{eq:kn-multilinear}.
Hence, the orthogonal polynomial operators \eqref{def: multivar PolyOP} are essentially equivalent to the multilinear ones, and in particular for any finite $\Lambda \subset \N_F^\infty$,
\begin{align*}
  \Lambda \textrm{ is monotone lower } \Longrightarrow \mathrm{span}\{P_{\bs \lambda} \;\big|\; \bs \lambda \in \Lambda\} = \mathrm{span}\{ M_{k_{\bs \lambda}}\;\big|\;\bs \lambda \in \Lambda\},
\end{align*}
where a monotone lower $\Lambda$ satisfies $(\Lambda - \mathbf{e_j}) \cap \N_F^\infty \subset \Lambda$ for every $j \in \N$. Finally, the connection between $P_{\bs{\alpha}}$ and $M_k$ in \eqref{def: nonlinear approx space} reveals that $P_{\bs \alpha}$ is a particular type of nonlinear kernel operator of the form \eqref{eq:multilinear-operator}:
\begin{align*}
  (P_{\bs \alpha} f)(y) &= \int_{\times^r \Omega_x} \prod_{j \in [r]} f(x_j) \widetilde{k}_{\bs \alpha}(\bs{x}, y) \bigotimes_{j=1}^r \dif\nu(x_j), & 
  \widetilde{k}_{\bs \alpha}(\bs{x},y) &\coloneqq \sum_{\substack{\bs \beta \in \N_F^\infty \\ \beta_0 =  \alpha_0, \bs \beta_{\sim 0} \leq \bs \alpha_{\sim 0}}} c_{\bs \alpha,\bs \beta} k_{\bs \beta}(\bs{x},y),
\end{align*}
where $k_{\bs\beta}(\bs{x},y)$ is as in \eqref{eq:kn-multilinear}.

\subsubsection{Density of Orthogonal Polynomial Operators} 
Our construction of $P_{\bs \alpha}$ was motivated by analytical expediency. As in Section \ref{sec:lin_sub_w_prod_meas}, we now ask whether these operators are also sufficiently expressive so as to approximate any element of $L^2_\rho(\mathcal X; \mathcal Y)$. This question connects to a classical problem in approximation theory -- the completeness of polynomial chaos expansions -- and, in particular, to the \emph{moment determinacy} of the approximation measure $\rho.$ 

A probability distribution $\pi$ on $(\R, \mathscr{B}(\R))$ satisfies the moment determinacy property if it is uniquely determined by its sequence of moments $\{\pi_k\}_{k \in \N_0}$, with $\pi_k = \int_{\R}x^k\dif \pi(x)$, i.e., if no other Borel probability measure has the same moment sequence. (Equivalently, $\pi$ has a unique solution to the Hamburger moment problem.) This property allows us to conclude operator density.

\begin{thm}[Density of Orthogonal Polynomial Operators in $L^2_\rho(\mathcal{X}; \mathcal{Y})$]\label{thm:nonlinear-operator-density}
Let $\mathcal{X}, \mathcal{Y}$ be separable Hilbert spaces with complete orthonormal bases $\{\xi_j\}_{j\ge 1}$ and $\{\psi_k\}_{k\ge 0}$. Let $\rho = \bigotimes_{j= 1}^{\infty}\rho_j$ be a product measure on $\mathcal{X}$ such that:
\begin{enumerate}[label = (\roman*)]
    \item The coordinates $\hat{f}_j\coloneqq \ip{\xi_j, f}_{\mathcal{X}}$ are independent with laws $\rho_j$, 
    \item Each $\rho_j$ has finite moments of all orders, a distribution function with an infinite number of points of increase on $\R$, and is moment determinate.
\end{enumerate}
  For each $j$, let $\{p_n^j\}_{n\ge 0}$ be the orthonormal polynomials in $L^2_{\rho_j}(\R),$ and for $\bs \alpha \in \N_F^{\infty}$.
  Then $\{P_{\bs \alpha}\}_{\bs \alpha \in \N_F^{\infty}}$, with $P_{\bs{\alpha}}$ as in \eqref{def: multivar PolyOP} is a complete orthonormal basis of $L^2_\rho(\mathcal X; \mathcal Y)$. In particular, for any $K \in L^2_\rho(\mathcal X; \mathcal Y)$ and any exhaustion $\{\Lambda_N\}_{N\ge 1}$ of $\N_{F}^{\infty}$ by finite sets: 
$$\lim_{N\to \infty} \int_{\mathcal{X}}\norm{K(f) - \sum_{\bs \alpha \in \Lambda_N} c_{\bs \alpha} P_{\bs \alpha}(f)}_{\mathcal Y}^2\dif \rho(f) = 0,$$
where $c_{\bs \alpha} = \ip{K, P_{\bs \alpha}}_{L^2_{\rho}(\mathcal X; \mathcal Y)}.$
\end{thm}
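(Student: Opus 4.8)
The plan is to take orthonormality as already established (it is recorded immediately before the statement) and concentrate entirely on \emph{completeness}: that the closed span of $\{P_{\bs\alpha}\}_{\bs\alpha \in \N_F^\infty}$ is all of $L^2_\rho(\mathcal X;\mathcal Y)$. My first move is to strip away the $\mathcal Y$-valued structure. Since $\{\psi_k\}$ is an orthonormal basis of $\mathcal Y$, every $K \in L^2_\rho(\mathcal X;\mathcal Y)$ decomposes as $K(f) = \sum_{k} K_k(f)\,\psi_k$ with scalar components $K_k(f) = \ip{K(f),\psi_k}_{\mathcal Y} \in L^2_\rho(\mathcal X;\R)$ and $\norm{K}_{L^2_\rho}^2 = \sum_k \norm{K_k}_{L^2_\rho(\mathcal X;\R)}^2$ (by Parseval in $\mathcal Y$ and Tonelli). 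Because each $P_{\bs\alpha}$ carries the single output mode $\psi_{\alpha_0}$ multiplied by a scalar polynomial in the coordinates, completeness of $\{P_{\bs\alpha}\}$ reduces to the scalar claim that the finitely supported product polynomials $\prod_{j}p_{\alpha_j}^j(\hat f_j)$ are dense in $L^2_\rho(\mathcal X;\R) \cong L^2(\bigotimes_j \rho_j)$.

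For the scalar claim I would proceed in two preparatory stages. First, the univariate density: for each fixed $j$, the hypotheses (finite moments of all orders, infinitely many points of increase, and moment determinacy) guarantee both that the orthonormal polynomials $\{p_n^j\}_n$ exist in every degree and, via the classical theorem that a Hamburger-determinate measure has dense polynomials in $L^2$, that $\{p_n^j\}_n$ is a complete orthonormal basis of $L^2_{\rho_j}(\R)$. Second, the finite-dimensional tensor step: for any $d$, since $L^2(\bigotimes_{j=1}^d\rho_j) = \bigotimes_{j=1}^d L^2_{\rho_j}(\R)$ as Hilbert spaces, the products $\{\prod_{j=1}^d p_{\alpha_j}^j\}$ form an orthonormal basis of $L^2(\bigotimes_{j=1}^d\rho_j)$, because a tensor product of orthonormal bases is an orthonormal basis of the Hilbert tensor product.

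The genuinely infinite-dimensional step is where the real work lies, and I expect it to be the main obstacle. Let $\mathcal F_d \coloneqq \sigma(\hat f_1, \dots, \hat f_d)$ be the $\sigma$-algebra generated by the first $d$ coordinates; these increase to $\mathcal F_\infty \coloneqq \sigma(\hat f_1,\hat f_2,\dots)$. The key point is that completeness of $\{\xi_j\}$ forces $\mathcal F_\infty$ to coincide, up to $\rho$-null sets, with the Borel $\sigma$-algebra on $\mathcal X$, since $f$ is recovered from its coordinates by the Parseval expansion and $\mathcal X$ is separable. Given any $K_k \in L^2_\rho(\mathcal X;\R)$, L\'evy's upward martingale convergence theorem gives $\E[K_k \mid \mathcal F_d] \to K_k$ in $L^2$ as $d\to\infty$. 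Each conditional expectation $\E[K_k\mid\mathcal F_d]$ is (a.e.) a function of $(\hat f_1,\dots,\hat f_d)$ alone, hence an element of $L^2(\bigotimes_{j=1}^d\rho_j)$, and by the finite-dimensional step it is approximable in $L^2$ by finite linear combinations of $\prod_{j=1}^d p_{\alpha_j}^j$ --- which are exactly finitely supported product polynomials (with $\alpha_j=0$, hence $p_0^j\equiv 1$, for $j>d$). Combining the two approximations, each to within $\epsilon/2$, shows $K_k$ lies in the closed span of $\{\prod_j p_{\alpha_j}^j\}$.

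Finally I would reassemble: approximating each component $K_k$ and controlling the tail $\sum_{k>K_0}\norm{K_k}^2_{L^2_\rho}$, which is small by square-summability of the components, shows $K$ lies in the closed span of $\{P_{\bs\alpha}\}$, giving completeness. The stated convergence along any exhaustion $\{\Lambda_N\}$ of $\N_F^\infty$ then follows from Parseval's identity in $L^2_\rho(\mathcal X;\mathcal Y)$, since for an orthonormal basis the partial sums converge unconditionally to $K$ with $c_{\bs\alpha} = \ip{K,P_{\bs\alpha}}_{L^2_\rho}$. The one place demanding care is the determinacy-to-density implication and its interaction with the infinite product: individual univariate determinacy of each $\rho_j$ is exactly what lets the martingale argument bypass any multivariate moment problem, so no joint determinacy hypothesis is required.
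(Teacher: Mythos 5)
Your proposal is correct, but it is considerably more self-contained than the paper's own proof, which disposes of the theorem in two lines by citing Corollary 3.10 and Remark 3.12 of \cite{ernst2012convergence}: the scalar completeness statement is imported wholesale from that reference, and the extension to $\mathcal{Y}$-valued operators is labeled ``a standard Bochner-space argument.'' Your argument reconstructs precisely the machinery inside that citation. The component-wise decomposition $K(f) = \sum_k K_k(f)\psi_k$ with $\norm{K}^2_{L^2_\rho} = \sum_k \norm{K_k}^2_{L^2_\rho(\mathcal{X};\R)}$ is the paper's unstated Bochner step, while the univariate density from moment determinacy (M.~Riesz's theorem), the identification of $L^2\bigl(\bigotimes_{j=1}^d \rho_j\bigr)$ with the Hilbert tensor product of the spaces $L^2_{\rho_j}(\R)$, and the L\'evy upward convergence $\E[K_k \mid \mathcal{F}_d] \to K_k$ in $L^2$ are exactly the ingredients of the cited scalar result. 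The only point you could state more sharply is the identification of $\sigma$-algebras: $\mathcal{F}_\infty = \mathscr{B}(\mathcal{X})$ holds exactly, not merely up to $\rho$-null sets, since $\norm{f-g}_{\mathcal{X}}^2 = \sum_j |\hat{f}_j - \hat{g}_j|^2$ exhibits every closed ball as $\mathcal{F}_\infty$-measurable, so every $K_k$ is genuinely $\mathcal{F}_\infty$-measurable and the martingale limit is $K_k$ itself. What your route buys is transparency about where each hypothesis enters---determinacy is used only univariately, and it is the product structure of $\rho$ (through the tensor and martingale steps) that obviates any joint determinacy assumption, exactly as you remark at the end; what the paper's route buys is brevity and delegation of these details to a reference in which they are already verified.
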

\begin{proof}
This is an immediate consequence of Corollary 3.10 and Remark 3.12 of \cite{ernst2012convergence}, which establishes completeness of the tensor-product polynomial system for $L^2_\rho(\mathcal X; \R)$ under the given assumptions and extends to $\mathcal{Y}$-valued functions by a standard Bochner-space argument, respectively. 
\end{proof}
Moment determinacy holds for many common measures, including all Gaussian and Jacobi families. Sufficient conditions include compact support, sub-exponential tails ($\int e^{a\abs{x}}\dif \pi < \infty$ for some $a\in \R_+$), or Carleman's condition $\sum \pi_{2n}^{-1/2n} = \infty$ on the even moments; see Theorem 3.4 of \cite{ernst2012convergence} for further details. We summarize that \cref{thm:nonlinear-operator-density} is a universal approximation theorem for any $L^2_\rho$ operator, ensuring adequate model capacity for our chosen approach. Naturally, choosing the `right' finite-dimensional subspace $V$ remains a challenge.

\subsubsection{Sampling}
Since \(\{P_{\bs \alpha}\; : \; \bs{\alpha}\in \Lambda\}\) forms an orthonormal basis for \eqref{def: nonlinear approx space}, we can, similar to \cref{sssection: Linear Approximation space}, derive explicit strategies for sampling from the optimal measure \(\mu\). By definition,
\begin{align*}
    \dif \mu(f) & = \frac{1}{\abs{\Lambda}}\sum_{\bs \alpha \in \Lambda} \norm{P_{\bs \alpha} f}^2_{\mathcal Y}\dif \rho(f)= 
    \frac{1}{\abs{\Lambda}}\sum_{\bs \alpha \in \Lambda}\prod_{j \in \N}\left(p^j_{\alpha_j}(\hat{f}_j)\right)^2 \dif \rho(f)= \frac{1}{|\Lambda|} \sum_{\bs \alpha \in \Lambda} \dif\rho_{\bs \alpha_{\sim 0}}(\hat f),
\end{align*}
where $\rho_{\bs \alpha_{\sim 0}}$ is a product measure on $\ell^2$ defined as 
\begin{align}\label{def: ind prod measure nonlin}
  \dif \rho_{\bs \alpha_{\sim 0}}(\hat{f}) &= \bigotimes_{j\in \N}\dif\widetilde{\rho}_{j, \alpha_j}(\hat{f}_j), & \dif \tilde{\rho}_{j,k}(\hat f_j) =  \left(p^j_{k}(\hat{f}_j)\right)^2 \dif \rho_j(\hat{f}_j).
\end{align}
Thus, $\rho_{\bs \alpha_{\sim 0}}$ is a product measure on $\ell^2$ comprised of marginal measures $\widetilde{\rho}_{j,k}$, with $\widetilde{\rho}_{j,0} = \rho_j$. Therefore, the induced optimal sampling measure $\mu$ is a mixture of measures,
\begin{align*}
  \dif \mu (f) &= \sum_{\bs \beta \in \N^\infty_0} w_{\bs \beta} \dif \rho_{\bs \beta }(\hat f), & w_{\bs \beta} = \frac{\abs{\{\bs \alpha \in \Lambda \text{ : }\bs \beta = \bs \alpha_{\sim 0} \} }}{\abs{\Lambda}}
\end{align*}
and one may therefore sample from $\dif \mu$ by using the same sampling procedure outlined in Section \ref{sssection: Linear Approximation space}. 
For details on efficiently sampling from the composite optimal measures ${\rho}_{\bs \alpha{\sim 0}}$, see \cref{sec:approx_induced_samp_LS}.

\subsection{Discrete Measures}\label{ssec: opt samp from discrete}
In some practical scenarios, the measure $\rho$ is not known, or may not be represented as a product measure. The theoretical results we've presented all still hold, but this makes numerical implementation challenging since our previous sampling schemes are not applicable. One strategy in such cases is to approximate $\rho$ with a discrete measure: Discretization may be one way to approximately identify an orthonormal basis, or $\rho$ may only be known through an available finite database of samples. With this discretized measure setup, there are constructive ways to sample from the optimal measure corresponding to the discretized measure, which results in near-optimal accuracy guarantees with respect to the discretized measure \cite{migliorati_multivariate_2021,adcock2020near}. Stronger assumptions are needed to achieve near-optimal accuracy with respect to the original (in principle unknown) measure $\rho$. We briefly describe the results here.
Let $X = \{x_i\}_{i=1}^S \subset \mathcal X,$ and define the discrete uniform measure
\begin{align}\label{def: discrete measure tau}
  \upsilon = \frac{1}{S}\sum_{k\in [S]}\delta_{x_k},
\end{align}
on $\mathcal X$, where $\delta_x$ is the Dirac mass centered at $x \in \mathcal{X}$. (Using non-uniform weights poses no significant challenge.) For a given, finite-dimensional operator subspace $V$ of dimension $N$, we can no longer identify a basis of orthonormal operators if $\upsilon$ is not a product measure, but since it's a measure of finite support, one can computationally identify the optimal measure, which is also of finite support, as the leverage score distribution of $V$ on the discrete measure $\upsilon$. In the function approximation context, this idea was investigated in \cite{migliorati_multivariate_2021,adcock2020near}. Computationally, this can be accomplished by identifying any orthonormal basis for the range of an $S \times N$ Vandermonde-like matrix; a typical algorithm employs the QR decomposition. Using this procedure, one can optimally sample from $\upsilon$, and recover near-best accuracy guarantees in the discretized $L^2_\upsilon(\mathcal{X}; \mathcal{Y})$ norm. This identifies a direct and feasible computational procedure for (at least approximate) optimal sampling under the discretized measure $\upsilon$ in \eqref{def: discrete measure tau}, and extends the applicability of our operator least squares framework to this setup. If, in addition, one can guarantee $V$-norm equivalence between $L^2_\upsilon(\mathcal{X}; \mathcal{Y})$ and $L^2_\rho(\mathcal{X}; \mathcal{Y})$, then stability with respect to the $L^2_\rho(\mathcal{X}; \mathcal{Y})$ norm can be achieved. We present details of all these arguments in \cref{app:sampling-discrete-measures}.

\section{Numerical Results}\label{sec: Numerical Results}

\begin{figure}[t]
  \centering
  \includegraphics[scale = 1.1]{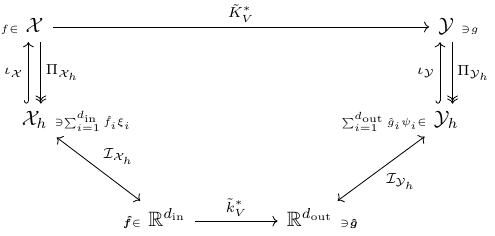}
  \caption{A schematic of the encoder-decoder structure of the computationally feasible operator learning problem. Above, $\Pi_W$ is the orthogonal projection onto a finite dimensional subspace $W$ from a Hilbert space $\mathcal{H}$, $\iota_{\mathcal{H}}$ is the natural embedding of $W$ into $\mathcal{H}$, and $\mathcal{I}_{W}$ is the identification map which---for a fixed, ordered basis of $W$---collects the expansion coefficients of $w\in W$ in a $\dim W$-dimensional vector. We may therefore approximate $\tilde{K}^*_V \approx \iota_{\mathcal{Y}} \circ \mathcal{I}_{\mathcal{Y}_h}^{-1}\circ  \tilde{k}_V^* \circ \mathcal{I}_{\mathcal{X}_h}\circ \Pi_{\mathcal{X}_h}$: that is, $\tilde{K}^*_V$ is the approximate operator between the infinite dimensional spaces $\mathcal{X}$ and $\mathcal{Y}$, whereas $\tilde{k}_V^*$ is a finite dimensional encoding.} 
  \label{fig:Comm-Diagram}
\end{figure}

In this section we evaluate optimally weighted least squares (WLS) estimators in $V\subset L^2_\rho(\mathcal X;\mathcal Y)$ and examine three complementary capabilities of the framework in practice. First, on a linear benchmark (Poisson) we ask not only whether the estimator is stable and accurate, but also whether it \emph{recovers operator structure}—in particular, diagonality in a chosen spectral basis—and whether it reconstructs the associated Green's kernel. Second, on a nonlinear benchmark (viscous Burgers') we study \emph{discretization design}: how the choice of index set and the output truncation dimension affects error as viscosity decreases and the output develops higher-frequency content. Third, on a large nonlinear example (2D incompressible Navier–Stokes) we address \emph{applicability to fixed, precomputed datasets} for which the sampling distribution cannot be controlled, with emphasis on how the evaluation norm influences approximation quality and can serve as an implicit regularizer. Throughout, we report conditioning of the weighted Gram matrix and approximation error in the Bochner norm. 

Computational feasibility requires finite-dimensional encodings. We therefore learn a mapping from a $d_{\mathrm{in}}$-dimensional encoding of $\mathcal X$ to a $d_{\mathrm{out}}$-dimensional encoding of $\mathcal Y$ (cf. \cref{fig:Comm-Diagram}). We use spectral encodings: $f\in\mathcal X$ and $g\in\mathcal Y$ are represented by expansion coefficients in fixed orthonormal bases of $\mathcal X_h$ and $\mathcal Y_h$. The choice of $\mathcal X_h$ is guided by an energy criterion: for a centered product measure $\rho=\bigotimes_j\rho_j$ with coefficient marginals $\hat f_j\sim\rho_j$ and second moments $\rho_{j,2}$, we have $
\mathbb E_{f \sim \rho} \|f\|_{\mathcal X}^2=\sum_{j\ge1}\rho_{j,2}$. 
We truncate to $j \in [d_{\mathrm{in}}]$ so that the partial sum captures a fixed fraction (e.g., $95\%$) of the input energy. We choose $\mathcal Y_h$ in a problem-dependent manner to ensure that truncation error is negligible in the reported norms.

In all experiments we employ the tensor-product operator space
$$
V=P\otimes \mathcal Y_h \subset L^2_\rho(\mathcal X;\mathcal Y),
\qquad
P=\mathrm{span}_{\bs\lambda\in\Lambda}\{p_{\bs\lambda}\}\subset L^2_\rho(\mathcal X_h;\mathbb R).
$$
This implies that $P$ can be spanned by orthonormal functions and so we can sample from the induced optimal measure, see \cref{sec:approx_induced_samp_LS}.
Then by \cref*{thm: samp complexity block learning}, the WLS stability bound depends only on $N_{\mathrm{eff}}:=\dim P$,
$$
M \ge c_\delta N_{\mathrm{eff}}
\log\Big(\tfrac{2N_{\mathrm{eff}}}{\epsilon}\Big)
\quad\Longrightarrow\quad
\mathbb P\left\{\|\bs G-\bs I\|_2\ge \delta\right\} \le \epsilon,
$$
entirely independent of $d_{\mathrm{out}}=\dim\mathcal Y_h$. This mitigates concerns about choosing a sufficiently expressive $\mathcal Y_h$. The second consequence is computational complexity. The weighted Gram matrix $\mathbf G$ decomposes into $d_{\mathrm{out}}$ identical $N_{\mathrm{eff}}\times N_{\mathrm{eff}}$ diagonal blocks, reducing the solve to an $M \times N_{\mathrm{eff}}$ matrix least-squares problem with total cost $\mathcal O \left( N_{\mathrm{eff}}^{3}+N_{\mathrm{eff}}^{2}d_{\mathrm{out}} \right)$ .\footnote{This estimate ignores logarithmic factors in $N_{\mathrm{eff}}$, which are typically dominated by forming the least-squares system when $d_{\mathrm{out}}<M$. It also excludes the cost of generating data and projecting onto $\mathcal X_h$ and $\mathcal Y_h$, which is often the practical bottleneck. For a more complete discussion of the computational complexity, including the dependence on $M$, see \cref{appendix: comp_complexity}. }

In each experiment, the basis of $P$ is indexed by a structured set $\Lambda\subset\mathbb N_0^{d_{\mathrm{in}}}$, such as weighted $\ell^p$ balls :
\begin{equation}\label{eq:ell_p_ball}
  \Lambda_{p,\bs \gamma}(k) \coloneqq \left\{ \bs \lambda \in \mathbb{N}_0^{d_{\mathrm{in}}} : \|\bs{\gamma} \odot \bs{\lambda} \|_p \le k \right\}  
\end{equation}
or weighted hyperbolic cross sets
\begin{equation}\label{eq: hc_set}\Lambda_{\mathrm{HC},\bs{\gamma}}(k) \coloneqq \left\{ \bs \lambda \in \mathbb{N}_0^{d_{\mathrm{in}}} : \|\bs{\gamma} \odot \log (\bs{\lambda} + \bs{1})\|_1 \le \log(k+1) \right\},
\end{equation}
where $\|\cdot\|_p$ is the standard $\ell^p$ norm on $d_{\mathrm{in}}$-vectors, $\odot$ denotes Hadamard (elementwise) multiplication, $\bs{1}$ is the $d_{\mathrm{in}}$-vector of ones, and $\log(\cdot)$ operates on vectors componentwise.
The weights $\bs\gamma \in \R^{d_{\mathrm{in}}}$, $\bs{\gamma} > \bs{0}$, encode anisotropy, which is advantageous when the target operator exhibits uneven directional importance or smoothness across input coordinates, allowing substantial reductions in $\dim P$ for a given accuracy.

Finally, we consistently place a product prior on input coefficients in a fixed orthonormal tensor–product basis
$\{\xi_{\bs j}\}_{\bs j\in\mathbb N^{d_{\mathrm{phys}}}}$ of $\mathcal X=L^2(\Omega)$, with $\Omega\subset\mathbb R^{d_{\mathrm{phys}}}$ rectangular. (We emphasize that $d_{\mathrm{phys}}$ here is the dimension of the physical domain of elements in $\mathcal{X}$, and is not related to $d_{\mathrm{in}}$ or $d_{\mathrm{out}}$.) In particular, we take 
$$
\rho \;=\; \bigotimes_{\bs j\in\mathbb N^{d_{\mathrm{phys}}}} \rho_{\bs j}, 
\qquad \hat f_{\bs j}\sim \rho_{\bs j}=\mathrm{Jac}(\alpha_{\bs j},\alpha_{\bs j}),
$$
where the symmetric Jacobi law is defined for $\alpha > -1$, is supported on $[-1,1]$, has density $
\mathrm d\mathrm{Jac}(\alpha,\alpha)(t)\propto (1-t^2)^{\alpha}\,\mathbf 1_{[-1,1]}(t)\dif t$, and is 
centered with variance $\Var(\hat f_{\bs j})=(2\alpha_{\bs j}+3)^{-1}.$\footnote{This is equivalent to the Beta distribution $\mathrm{Beta}(\alpha+1,\alpha+1)$ affinely mapped to have support on $[-1,1]$.}
We prefer this to a popular, alternative Gaussian process prior because, in the spectral encoding used for WLS, it yields a diagonal coefficient covariance, simple sampling and truncation control, and better conditioning of the weighted Gram matrices under the induced sampling method; by contrast, GP priors typically induce dense coefficient covariances in polynomial spaces, which can degrade conditioning. To control energy decay we set $\alpha_{\bs j}\asymp \norm{\bs j}_p^{k}$ for some $p\in[1,\infty]$ and $k>0$. It is not difficult to show that 
$$
\mathbb E_\rho \norm{f}_{\mathcal X}^2
= \sum_{\bs j\in\mathbb N^{d_{\mathrm{phys}}}}\Var(\hat f_{\bs j})
\asymp \sum_{m=1}^\infty m^{d_{\mathrm{phys}}-1-k},
$$
so that $f\in\mathcal X$ almost surely if and only if $k>d_{\mathrm{phys}}$.

\subsection{Poisson's Equation}\label{ssec: Poisson}

Below, we aim to learn the \emph{linear} solution operator $K = \Delta_{\Omega}^{-1} : \mathcal{X} \to \mathcal{Y}$ for the two-dimensional Poisson problem:
\[
\Delta u = f, \qquad u|_{\partial\Omega} = 0, \qquad \Omega = (0,1)^2.
\]
This example introduces the numerical aspects of our operator learning framework within a simplified linear setting. With this goal, we set $\mathcal X=\mathcal Y=H^1_0(\Omega) \subset L^2(\Omega)$ endowed with the $L^2$ inner product and use the orthonormal sine basis
$$
\xi_{\mathbf n}(x_1,x_2)=2\sin(\pi n_1 x_1)\sin(\pi n_2 x_2),\qquad \mathbf n=(n_1,n_2)\in\N^2,
$$
for which $\Delta$ is diagonal with eigenvalues $-\pi^2(n_1^2+n_2^2)$.
The approximation measure $\rho$ on $\mathcal X$ is defined by independent marginals for the generalized Fourier coefficients $\hat f_{\mathbf n}\coloneqq\langle \xi_{\mathbf n},f\rangle_{L^2(\Omega)}$, with $\hat f_{\mathbf n}\sim \rho_{\mathbf{n}}$, where each $\rho_{\mathbf n}$ is the symmetric Jacobi probability measure $\mathrm{Jac}(\alpha_{\mathbf n},\alpha_{\mathbf n})$ on $[-1,1]$ and
$\alpha_{\mathbf n} := \norm{\mathbf n}_1^{3}.$
This choice enforces $\ell^2$-decay of second moments so that $f\in \mathcal{X}$ ($\rho$-a.s.). For computation we  work in the 1225 dimensional input and output spaces
$$
\mathcal X_h=\mathcal Y_h=\mathrm{span}\{\xi_{\mathbf{n}} : \mathbf n\in[35]^2\}.
$$

We note that $\mathcal{X}_h$ captures $95\%$ of the energy in $\mathcal{X}$. We solve the Dirichlet Poisson problem by a truncated Fourier-sine spectral Galerkin expansion in the eigenbasis, retaining the 1225 modes capturing $95\%$ of the input $L^2$ energy under $\rho$.

The approximation spaces are built from the linear, rank-one $L^2_\rho$-orthonormal operators in \eqref{eq:Phin-linear}.
In particular, for $k\in\{100,200,\dots,1100,1200\}$, we define 
$$
V(k):=\mathrm{span}\{\Phi_{\bs \lambda}:\ \bs \lambda\in \Lambda(k)\times [35]^2\},
$$
where $\Lambda(k)\subset[35]^2$ collects the first $k$ indices in $[35]^2$ in lexicographic order. Thus every $A\in V(k)$ acts only on the $k$ selected input modes.

This setting satisfies the hypotheses of Theorem \ref{thm: samp complexity block learning}, with $N_{\mathrm{eff}}(k)\coloneqq\abs{\Lambda(k)}=k$. We therefore optimally sample $M(k) = \left\lceil c_\delta N_{\mathrm{eff}}(k)\log \left(\frac{2N_{\mathrm{eff}}(k)}{\epsilon}\right)\right\rceil$. We take $\delta=\epsilon=\tfrac12$ (so $c_\delta\approx6.5$). For comparison we also form least-squares systems from $M(k)$ Monte Carlo samples drawn from $\rho$.

\begin{figure}[htb]
  \centering
\includegraphics[width=\linewidth]{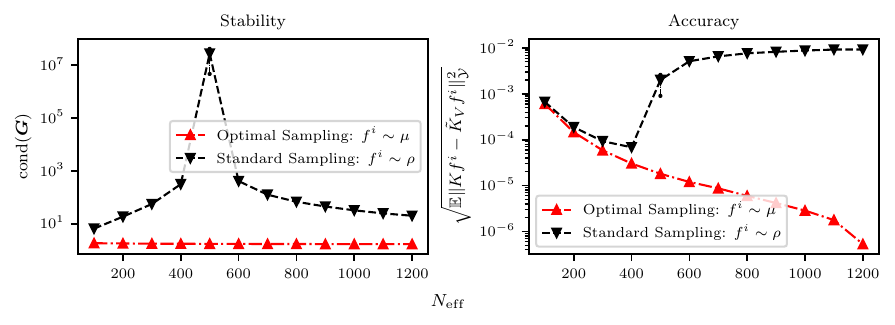}
  \caption{The stability and accuracy of the discrete least squares problem, analyzed as a function of the effective dimension and the sampling method. Here, $M = c_{\frac{1}{2}}N_{\mathrm{eff}}\log(4N_{\mathrm{eff}})$.}
  \label{fig:Cond_Num_Poisson}
\end{figure}

Figure \ref{fig:Cond_Num_Poisson} reports (i) the condition number $\mathrm{cond}(\bs G)$ of the Gram matrix, averaged over 3 trials for each $k$, and (ii) the empirical $L^2_\rho$-Bochner norm error of the estimator $\tilde K_{V(k)}$ on 500 test samples $f^i\sim \rho$:
$$
\norm{K - \tilde{K}^*_{V(k)}}^2_{L^2_\rho(\mathcal X; \mathcal Y)} \approx \frac{1}{500}\sum_{i\in [500]}\norm{Kf^i - \tilde{K}^*_{V(k)}(f^i)}^2_{\mathcal Y}, \qquad f^i \overset{\mathrm{iid}}{\sim}\rho.
$$
As ensured by Lemma \ref{lem:comp_norms}, optimal sampling yields, with high probability, uniformly well-conditioned systems satisfying $\mathrm{cond}(G)\ \le\ \frac{1+\delta}{1-\delta}=3,$
while Monte Carlo sampling exhibits larger and more variable condition numbers which generally grow with the dimension of the approximation space. We note that there is a spike in $\mathrm{cond}(\bs G)$ around $N_{\mathrm{eff}}= 500.$ This is likely a finite-sample effect: for moderate dimensions, with $M\sim N_{\mathrm{eff}}\log N_{\mathrm{eff}}$, random fluctuations in the empirical covariance can produce small eigenvalues. As $N_{\mathrm{eff}}$ increases further, the growing sample size improves concentration around the identity and the condition number decreases again. For low-dimensional approximation spaces, both strategies show decreasing approximation error, and for this simple linear problem, optimally sampling does not aid in accuracy. However, as the dimension of the approximation space increases, the accuracy of the unweighted least squares problem degenerates, again around $N_{\mathrm{eff}}=500.$ Though $\mathrm{cond}(\bs G)$ then decreases as $N_{\mathrm{eff}}$ grows, this only reflects an improvement in numerical stability of the linear solve; it does not mitigate the fact that, under Monte Carlo sampling, we are fitting an increasingly high-dimensional model from relatively few samples. In this regime, the estimator over-fits sampling fluctuations in the training data, so the test error continues to grow even though the Gram matrix is better conditioned. 
 
\begin{figure}[htb]
\centering
  \begin{subfigure}[t]{0.3\textwidth}
    \centering
    \includegraphics[height = 4.5cm]{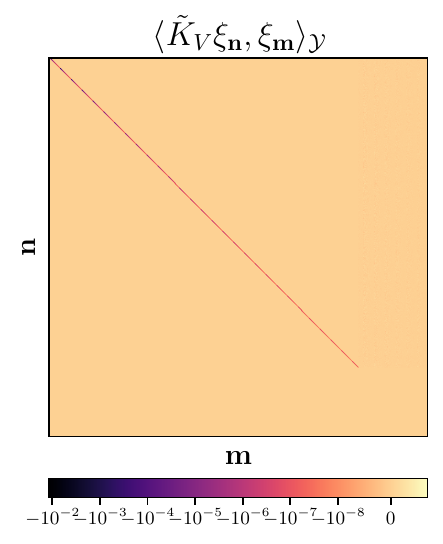}
    \caption{A representation of  $\tilde{K}_{V(1000)}$ as a $1225\times1225$ matrix.}
    \label{fig:Possion_Op}
  \end{subfigure}
  \hfill
  \begin{subfigure}[t]{0.68\textwidth}
    \centering
    
    \includegraphics[width = \linewidth]{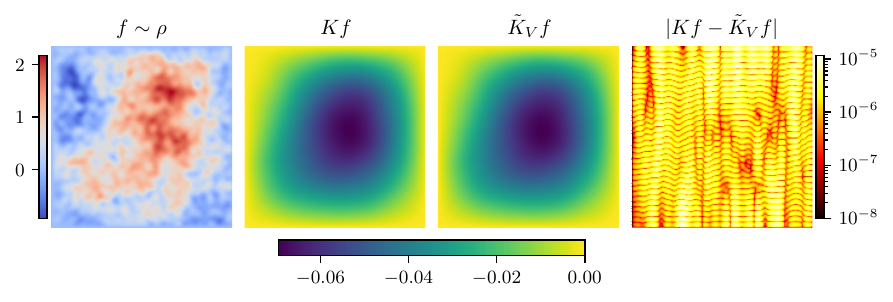}
    
    \caption{A comparison of the action of $K= \Delta^{-1}_{\Omega}$ and $\tilde{K}_{V(1000)}$ on the median error test sample, $f$. Here, $\norm{Kf - \tilde{K}_{V}f}_{\mathcal Y} = 2.845\times 10^{-6}.$}
    \label{fig:Poisson_action}
  \end{subfigure}
  \caption{A representation of $\tilde{K}_{V(1000)}$ and its action on a test sample.}
\end{figure}

In \cref{fig:Possion_Op}, a representation of $\tilde{K}_{V(1000)}$ is given. Observe that the matrix is diagonal, as it should be, since $K= \Delta^{-1}$ is diagonalized by our choice of $\{\xi_{\mathbf n}\}.$ We note that the eigenvalues do not decay monotonically in this representation only because of the choice of lexicographic order. In \cref{fig:Poisson_action}, the action of this approximate operator is compared to the action of the true solution operator on the median error test sample $f\sim \rho,$ achieving an $L^2_{\mathcal{Y}}$-error on the order of $10^{-6}$. This error is due in large part to the final $225$ modes of $f$, which are ignored in the mapping $\tilde{K}_{V(1000)}.$

Finally, we highlight the connection between our operator approximation and classical kernel learning. As discussed in \cref*{ssec: lin_kernel_learning}, when the operator  $K \in L^2_\rho(\mathcal X; \mathcal Y)$ is approximated in a subspace of the form  $V = \mathrm{span}\{\psi_{n_2} \otimes \xi_{n_1}\} $, the least-squares estimator $\tilde{K}^*_V$ corresponds to a learned Green’s function or kernel representation. Concretely, we consider the 1D Dirichlet Poisson solution operator
$K: L^2(0,1)\to H_0^1(0,1)$ defined by $u = Kf,$
where $u$ solves 
$-u''(x) = f(x)$ for $x\in (0,1)$ 
and $u(0) = u(1) = 0.$ This operator admits a Green's function $G$ such that
$$(Kf)(x) = \int_{(0,1)} G(x,y)f(y)\dif y$$
with $G(x, y) = \min\{x,y\} - xy. $
The learned kernel from our method provides an interpretable surrogate that converges to this target as the basis size increases. In \cref*{fig:ker_pt_err}, we visualize the empirical kernel as a bivariate function and directly compare its $L^2$-error against the exact kernel—validating both the spectral convergence and the kernel-learning perspective.

In sum, the optimally sampled WLS fit remains uniformly well conditioned and, in this controlled linear setting, recovers the operator’s diagonal structure. The learned kernel converges to the Green’s function, confirming that the framework reproduces known operator/kernel structure when it exists.

\begin{figure}[htb]
  \centering
    \begin{subfigure}[t]{0.3\textwidth}
      \includegraphics[width = \linewidth]{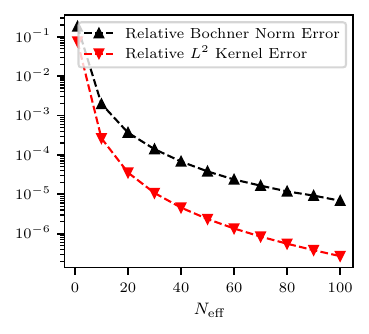}
      \caption{The relative Bochner- and $L^2(\Omega)$-norm errors for the operator and kernel learning problems, as a function of $N_{\mathrm{eff}}$. }
      \label{fig:ker_err}
    \end{subfigure}
    \hfill
    \begin{subfigure}[t]{0.68\textwidth}
      \includegraphics[width = \linewidth]{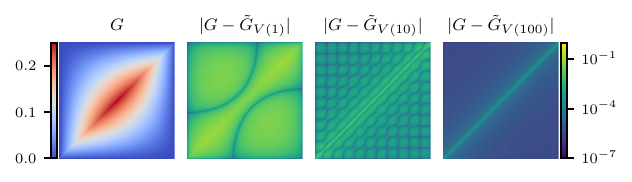}
      \caption{The true Green's kernel, $G$, (left) and the pointwise absolute error in the approximate Green's function, $\tilde{G}_V$ on $\Omega$ for increasing approximation spaces $V(N_{\mathrm{eff}})$ (right).}
      \label{fig:ker_pt_err}
    \end{subfigure}
    \caption{The kernel learning problem for the 1D Poisson problem with Dirichlet boundary conditions. Here, $\mathcal{X}_h = \mathcal{Y}_h = \mathrm{span}_{n\in [208]}\{\sqrt{2}\sin(n\pi x)\}$, and $\rho = \otimes_{n\ge 1} \mathrm{Jac}(\alpha_n, \alpha_n)$ with $\alpha_n = n^{2}$. We note that $\mathcal{X}_h$ captures $99.5\%$ of the forcing energy, and that $M(208)$ optimally sampled forcing terms were used for training, whereas 500 Monte Carlo samples of the forcing were used for testing.}
\end{figure}

\subsection{Viscous Burgers' Equation}\label{ssec: Burgers}
In this section, we aim to learn a solution operator for the viscous Burgers' equation: that is, the map pushing an initial condition $u_0$ to its solution at a fixed time $T$. In particular, we consider the map $K:\mathcal{X}\to \mathcal{Y}$ for which $K(u_0) = u(\cdot, T)$, where $u(\cdot, t)$ satisfies 
\begin{align*}
\begin{cases} 
u_t + uu_x = \nu u_{xx}, & (x,t) \in \Omega \times (0,T], \\
u = 0, & (x,t) \in \partial \Omega \times (0,T].
\end{cases}
\end{align*}
We fix $\Omega = [0,1]$, $T = 0.2$, and explore viscosity parameters  $\nu = 10^{-1}, 10^{-2}$, and $10^{-3}$. This example demonstrates the effectiveness of our proposed method in approximating nonlinear operators using the optimal sampling scheme. 

We generate training data for the viscous Burgers' map via a sine spectral Galerkin discretization $$u(x,t) \approx \sum_{j=1}^{d_{\mathrm{solve}}} \hat{u}_j(t)\sqrt{2}\sin(j\pi x),$$ combined with a first–order IMEX Euler scheme that treats the viscous term implicitly and the nonlinear flux explicitly in a pseudospectral fashion using an oversampled quadrature grid.
For each initial condition, the prescribed Galerkin coefficients are zero–padded to a higher–dimensional $d_{\mathrm{solve}} > 10\max\{d_{\mathrm{in}}, d_{\mathrm{out}}\}$ solver space to mitigate aliasing,  and the resulting state is truncated back to the first $d_{\mathrm{out}}$ modes.

With this goal, we set the input and output spaces to be $\mathcal X=\mathcal Y=H^1_0(\Omega) \subset L^2(\Omega)$ endowed with the $L^2$ inner product and work in the orthonormal sine basis
$$\{\xi_n(x)\}_{n\in \N} \coloneqq \{\sqrt{2}\sin(n\pi x)\}_{n\in \N}.$$ We define the approximation measure $\rho$ on $\mathcal X$ via the independent marginals $\ip{\xi_{n}, u_0}_{L^2(\Omega)}\sim  \rho_{n}$, where each $\rho_n = \mathrm{Jac}(\alpha_n, \alpha_n)$ is the symmetric Jacobi measure on $[-1,1]$ with $\alpha_n \coloneqq n^{2},$
which enforces $\ell^2$ decay. 

We form approximation subspaces of nonlinear operators by the span of the $L^2_\rho$ orthonormal operators $P_{\bs{\lambda}}$ defined in \eqref{def: multivar PolyOP}. (The corresponding orthogonal polynomial families are symmetric Jacobi/Gegenbauer polynomials.)
For the numerical experiments, we restrict to input and output dimensions of $d_{\mathrm{in}} = 20$ and $d_{\mathrm{out}} = 150$, respectively. This choice of $\mathcal{X}_h $ captures $95\%$ of the input energy.
We restrict the univariate polynomial degree defining each basis operator $P_{\bs \lambda}$ by $\lambda_j \in [10].$
Explicitly, we consider approximation spaces $V(k;\ast)$ generated by the index sets of the form 
$$\Lambda(k;\ast) \coloneqq [150]\times \left(\Lambda_{\ast, \bs \gamma}(k) \cap \Lambda_{{\infty}}(10)\right) \subset [150]\times [10]^{20}, \qquad \ast \in \{\mathrm{HC},1\},$$
as defined in \eqref{eq: hc_set} and \eqref{eq:ell_p_ball}.
The anisotropy parameter above is defined by the evenly spaced multi-index
$$\gamma_j = 1 - (j-1)\Delta, \quad \Delta = \frac{0.99}{20}, \quad j \in [20],$$
which de-emphasizes higher degree input frequency modes $\xi_j$. The choice of $\ast$ also effects the regularization: the weighted hyperbolic cross index set de-emphasizes higher degree univariate polynomial degrees $\lambda_j$ far more than the weighted $\ell^1$-ball. This setting falls under the purview of Theorem \ref{thm: samp complexity block learning}, so the sample complexity is driven by the \emph{effective dimension}
$$N_{\mathrm{eff}}(k;\ast) = \abs{\Lambda_{\ast, \bs \gamma}(k) \cap \Lambda_{{\infty}}(10)}.$$
For this problem, we will take $M(k; \ast) = \lceil N_{\mathrm{eff}}(k; \ast) \log N_{\mathrm{eff}}(k; \ast)\rceil$
samples of $u_0.$ This choice of $M$ is insufficient to theoretically guarantee stability, and therefore to apply, e.g., Theorem \ref{thm: Truncated Approx Error in Prob}, but it empirically suffices in practice, as we now demonstrate.

\begin{figure}[htb]
\centering
  \begin{subfigure}[t]{0.48\textwidth}
    \includegraphics[width = \linewidth]{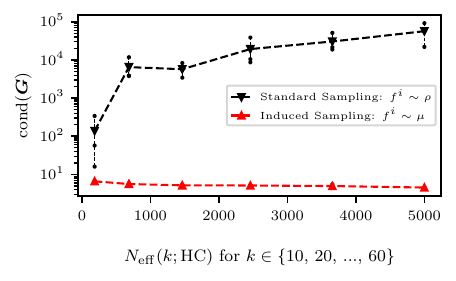}
    \caption{The condition number of the Gramian for subspaces of increasing dimension. Here, $\nu = 0.1.$}
    \label{fig:Cond_Num_Burgers}
  \end{subfigure}
  \hfill
  \begin{subfigure}[t]{0.48\textwidth}
    \includegraphics[width = \linewidth]{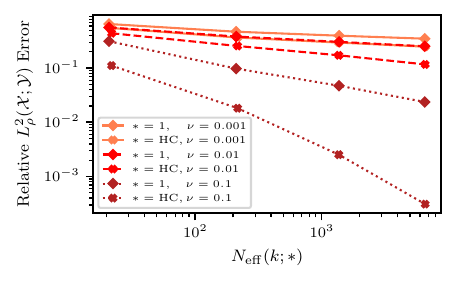}
    \caption{The (empirical) relative $L^2_\rho$ error 
    on $1000$ test samples $u_0^i\sim \rho$ for various subspaces $V(k;\ast)$ and viscosity values $\nu$. }
    \label{fig:Test_Err_Burgers}
  \end{subfigure}
  \caption{Stability and accuracy of the approximation of the Burgers' solution operator.}
\end{figure}

Figure \ref{fig:Cond_Num_Burgers} reports the condition number $\mathrm{cond}(\bs G)$ of the Gram matrix, averaged over 3 trials for each $k$, for $\ast = \mathrm{HC}$ and $k \in \{10,20, \dots, 60\}.$ In each case, stability is compared under optimal and standard sampling. Note that in this nonlinear setting, optimal sampling is vital in ensuring that the discrete least squares problem remains stable; moreover, even though $M(k;\mathrm{HC})$ is theoretically undersampling, the Gram matrices corresponding to optimally sampled initial conditions remain well-conditioned.

Figure \ref{fig:Test_Err_Burgers} shows the \emph{relative} empirical $L^2_\rho$-Bochner norm for the estimator $\tilde{K}_{V(k; \ast)}$ on 1000 test samples $u_0^i\sim \rho$ as a function of $k, \ast$ and the viscosity parameters $\nu$. In each case, $\tilde{K}_{V(k;\ast)}$ was constructed through $M(k,\ast)$ optimally drawn observations, and care was chosen in $(k,\ast)$ to ensure that subspaces of nearly equal dimension are compared. Observe that decreasing viscosity $\nu$ increases the difficulty of the approximation problem, as higher frequency output modes have larger coefficients. This manifests in the error discrepancy in the choice of $\ast$ as well: Empirically, we find that at fixed $N_{\mathrm{eff}}$ the hyperbolic–cross index set yields smaller errors than the $\ell^1$-ball, and that this advantage is more pronounced at lower viscosities (smaller $\nu$). A simple way to understand this is that for larger viscosities the Burgers' dynamics are strongly diffusive: high–frequency modes are damped, and the solution at time $T$ is largely controlled by a few low–frequency input modes and relatively simple interactions between them. Hyperbolic–cross sets are designed to emphasize such “few–active–coordinate’’ interactions, whereas an $\ell^1$-ball, being more isotropic, allocates many basis functions to more complicated combinations of modes. Here, the HC space places its limited degrees of freedom where the solution map actually varies most, which is consistent with the observed performance gap.

\begin{figure}[htb]
    \centering
\includegraphics[width = \textwidth]{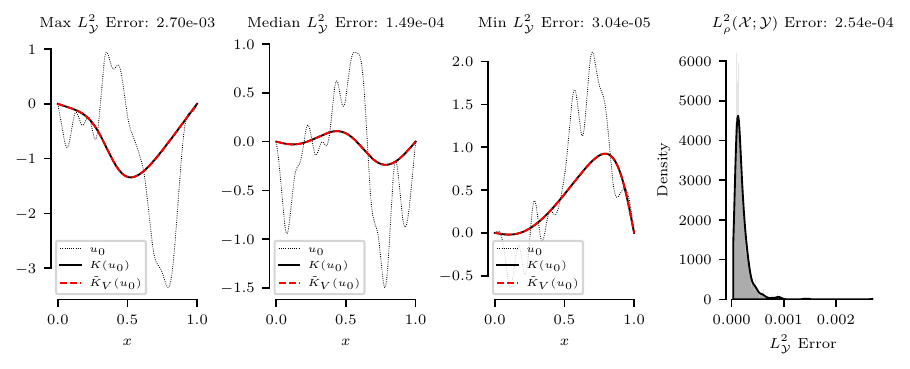}
    \caption{A comparison of the true solution $u= K(u_0)$ and the approximate solution $\tilde{K}^*_{V(60; \mathrm{HC})}(u_0)$ for representative initial conditions $u_0\sim \rho,$ along with a full test error distribution generated by the 1000 test samples. Here $\nu = 0.1.$}
    \label{fig:Test_Err_Dist_Burgers}
\end{figure}

Figure \ref{fig:Test_Err_Dist_Burgers} shows the test error distribution of $\tilde{K}_{V(60; \mathrm{HC})}$ on $1000$ samples $u_0^i\sim \rho$. It also includes a comparison of the action of $\tilde{K}_{V(60; \mathrm{HC})}$ and $K$ on representative initial conditions. We emphasize that the empirical Bochner norm error in this case is on the order of $10^{-4},$ and, as shown in \cref{fig:Test_Err_Burgers}, the relative Bochner norm error is on the order of $10^{-3}.$

Finally, though we took $d_{\mathrm{out}} = 150$ throughout, figure \ref{fig:vb_trunc} shows how such a choice might be made. Figure \ref{fig:out_trunc_err_VB} isolates the truncation error $\norm{K - \Pi_{Y_h}K}_{L^2_\rho}$ as a function of $d_{\mathrm{out}}$ and $\nu$, and shows that once $d_{\mathrm{out}}$ is large enough that the `energy fraction lost' falls below a prescribed tolerance (e.g. $<1\%$), further enlarging $\mathcal{Y}_h$ yields diminishing returns. Figure \ref{fig:vb_truncation_err} displays the full relative Bochner error, so by \cref{thm: Truncated Approx Error in Prob} its behavior reflects the balance between three contributions: best-approximation error in the operator space $V$, the output truncation term quantified in figure \ref{fig:vb_trunc}, and a noise term that includes the numerical PDE discretization used to generate the training data. Because we fix the input truncation $d_{\mathrm{in}}=20$, we are effectively learning the restriction of $K$ to the subspace $\mathcal X_h = \mathrm{span}\{\xi_1,\dots,\xi_{d_{\mathrm{in}}}\}$; the corresponding input truncation error is therefore a background term that cannot be reduced by increasing $d_{\mathrm{out}}$. Thus the saturation of the curves in figure \ref{fig:vb_truncation_err}  should be interpreted as the point at which Y-truncation no longer dominates, and the residual error is governed primarily by the choice of polynomial operator space and the fidelity of the underlying Burgers solver, rather than by the WLS procedure itself.

\begin{figure}[htbp]
  \centering
    \begin{subfigure}[t]{0.48\textwidth}
      \includegraphics[width = \linewidth]{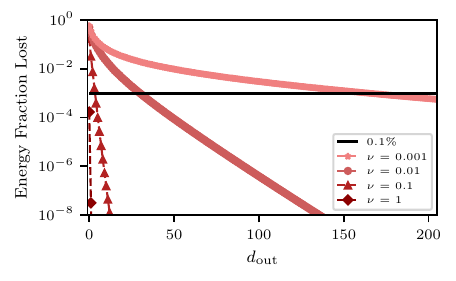}
      \caption{The energy fraction $1 -\frac{ \sum_{j \in [d_{\mathrm{out}}]}\mathbb{E}\left[\abs{\xi_j^*K(f)}^2\right]}{\mathbb{E}\left[\norm{K(f)}_{\mathcal{Y}}^2\right]}$ lost by truncating $\mathcal{Y}_h$. The expectations were approximated via an empirical average over 100 pushforwards of $f \sim \rho$.  }
    \label{fig:out_trunc_err_VB}
    \end{subfigure}
    \hfill
    \begin{subfigure}[t]{0.48\textwidth}
      \includegraphics[width = \linewidth]{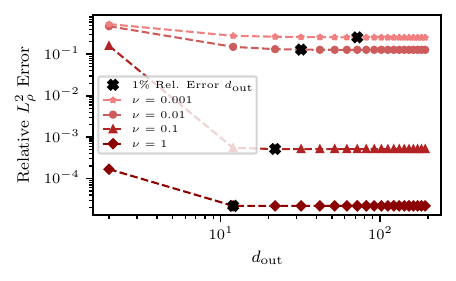}
      \caption{The relative Bochner norm error as a function of $d_{\mathrm{out}} = \dim \mathcal{Y}_h$ and the viscosity parameter $\nu$. Suitable truncation dimension (achieving less that $1\%$ relative truncation error) is shown to increase with decreasing $\nu$. }
      \label{fig:vb_truncation_err}
    \end{subfigure}
    \caption{The effect of $d_{\mathrm{out}}$ in $\mathcal{Y}$ (left) and in $L^2_\rho(\mathcal{X}_h; \mathcal{Y}_h)$ (right). In both bases, we fix $d_{\mathrm{in}} = 20.$}
    \label{fig:vb_trunc}
\end{figure}

\subsection{Navier-Stokes Equation}\label{ssec: Navier Stokes}

In this section, we aim to learn the \emph{nonlinear} solution operator 
\[
K: f \mapsto w(T, \cdot),
\]
for the two-dimensional incompressible Navier-Stokes equations on the torus $\mathbb{T}^2 = [0, 2\pi)^2$ in vorticity–streamfunction form:
\[
\partial_t w + u \cdot \nabla w = \frac{1}{\mathrm{Re}} \Delta w + f, \qquad
u = \nabla^\perp \phi, \qquad
-\Delta \phi = w, \qquad
\int_{\mathbb{T}^2} \phi \dif x = 0,
\]
with initial condition $w(0, \cdot) = 0$, Reynolds number $\mathrm{Re} = 500$ and terminal time $T = 5$. This example is designed to demonstrate that the method proposed in this paper can be applied using numerical basis and data sets constructed independently of the optimal sampling procedure. 

With this goal, we use an empirical dataset $X = \{(f^i,w^i)\}_{i=1}^S$ of size $S = 12000$ consisting of forcings $f^i$ and corresponding terminal-time vorticities $w^i=K(f^i)$, computed on a uniform grid on $\mathbb{T}^2$. A held-out set of $2000$ data pairs is used for testing. The data was generated for use in \cite{kossaifi2023multi}. \footnote{Reference \cite{kossaifi2023multi} provides details on the numerical solver and the dataset is available for download at \url{https://zenodo.org/records/12825163}.}

Let $\upsilon$ denote the empirical distribution of the inputs $\{f^i\}$: i.e., $\upsilon = S^{-1}\sum_{i\in [S]}\delta_{f^i}$. Let $\mathcal X_h\subset L^2(\mathbb{T}^2)$ be the span of the PCA basis $\{\xi_j\}_{j\in [d_{\mathrm{in}}]}$ capturing $95.5\%$ of the forcing energy. Here, $d_{\mathrm{in}} = 142$. We write $\xi_j^* f$ for the PCA coefficients. For the output space, we fix $\alpha\in \R$ and set $\mathcal Y_h(\alpha)\subset H^\alpha(\mathbb{T}^2)$ to be the span of Fourier modes orthonormal in $H^\alpha$:
$$
\psi_{\mathbf n}(\mathbf x) \;=\; \frac{e^{\mathrm{i} \mathbf n\cdot \mathbf x}}{2\pi\,(1+\norm{\mathbf n}_2^2)^{\alpha/2}},
\qquad \mathbf n\in\mathbb{Z}^2,
$$
and truncate to the smallest subspace capturing $99.9\%$  $H^\alpha$-energy of $\{w^i\}$.

\begin{figure}
  \centering
    \begin{subfigure}[t]{0.48\textwidth}
    \includegraphics[width = \linewidth]{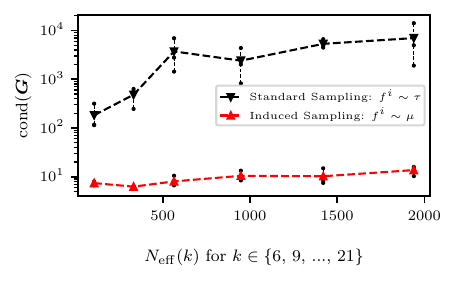}
      \caption{The condition number of the Gramian for subspaces of increasing dimension.}
      \label{fig:Cond_Num_NS}
    \end{subfigure}
    \hfill
    \begin{subfigure}[t]{0.48\textwidth}
    \includegraphics[width = \linewidth]{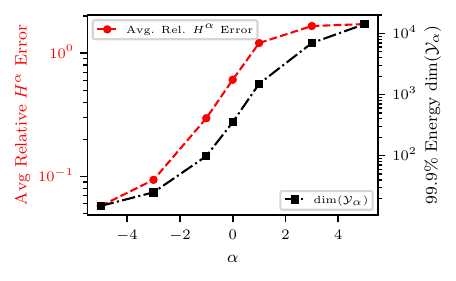}
      \caption{On the left scale (red), we show the average relative $H^{\alpha}$ error on various subspaces $V(28;\alpha)$, computed via 2000 test samples.  On the right scale (black), we show the $\dim(\mathcal{Y}_{\alpha})$ required to obtain 99.9\% output $H^{\alpha}$ Energy.
      }
      \label{fig:err_NS}
    \end{subfigure}
    \caption{Stability and accuracy of the approximation of the Navier-Stokes solution operator.}
  \end{figure}
  
Following \eqref{def: multivar PolyOP}, we parameterize candidate operators by 
\begin{equation}
P_{\bs \lambda}(f) = \psi_{\lambda_0}\prod_{j=1}^{d_{\mathrm{in}}} p^{j}_{\lambda_j}\big(\xi_j^* f\big),
\end{equation}
where, for a product reference measure $\beta=\bigotimes_{j=1}^{d_{\mathrm{in}}}\beta_j$ on the coefficient vector $(\xi_j^* f)_{j=1}^{d_{\mathrm{in}}}$, the $p^{j}_\ell$ are univariate $\beta_j$-orthonormal polynomials. We choose $\beta_j$ to be a Jacobi distribution whose first and second moments match the $j^{th}$ marginal of $\upsilon$.
To encode anisotropy, define weights $$\gamma_j = \sigma_j/ \sigma _{\mathrm{max}},$$
with $\{\sigma_j\}$ the (decreasing) singular values associated with $\{\xi_j\}$.  We restrict the univariate polynomial degrees in $P_{\bs \lambda}$ to $[10]$ and form approximation spaces by defining hyperbolic cross sets generating these degrees. Explicitly, we take
$$V(k;\alpha) = \mathrm{span}_{\bs \lambda}\{P_{\bs \lambda} \text{ : } \bs \lambda \in [\dim(\mathcal Y_h(\alpha))] \times \left(\Lambda_{\mathrm{HC}, \bs \gamma}(k) \cap\Lambda_{\infty}(10)\right)\} = \mathcal{Y}_h(\alpha)\otimes G_{\Lambda}(k)$$

We emphasize that the basis operators are not $L^2_\upsilon$ orthonormal and are merely used to define the space. We form an $L^2_\upsilon(\mathcal X;\mathcal \R)$ orthonormal basis of $G_{\Lambda}$ and the associated optimal sampling measure $\mu(V(k;\alpha), \upsilon)$ using the method described in \cref{app:sampling-discrete-measures}.  Then, by Theorem \ref{thm: samp complexity block learning}, the sample complexity for the discrete least squares problem is defined in terms of the effective dimension $N_{\mathrm{eff}}(k) = \dim(G_{\Lambda}(k))$. In each of the numerical experiments, like the previous section we take, $M(k) = \lceil N_{\mathrm{eff}} \log N_{\mathrm{eff}}\rceil$
samples $f^i \sim X$, drawn either from the discrete measure $\upsilon$ or an optimal sampling measure $\mu(V(k;\alpha), \upsilon).$

Figure \ref{fig:Cond_Num_NS} shows the condition number of the Gram matrix, averaged over three trials, for $k \in \{6,9,12,\dots, 21\}.$ Optimal sampling keeps $\mathrm{cond}(\bs G)$ essentially flat as $N_{\mathrm{eff}}$ grows, whereas Monte Carlo sampling deteriorates. We note that the mild upward drift under $\mu$ reflects our deliberate undersampling by the logarithmic factor. 

In figure \ref{fig:err_NS}, the red curve (left axis) shows the average relative $H^{\alpha}$ error of $\tilde{K}_{V(28;\alpha)}$ on the 2000 test samples $f^i$; the black curve (right axis) reports $\dim \mathcal Y_h(\alpha)$, the smallest Fourier subspace that captures $99.9\%$ of the empirical output $H^\alpha$-energy for that $\alpha$. For each $\alpha$, we therefore choose $\mathcal Y_h(\alpha)$ so that the truncation error $\norm{K - \Pi_{\mathcal Y_h(\alpha)}K}_{L^2_\upsilon(\mathcal X;H^\alpha)}$ is negligible at the scale of the plot. The observed dependence of the average relative $H^{\alpha}$ error on $\alpha$ is then dominated by the approximation properties of the fixed polynomial space $G_{\Lambda}(28)$ and not by output truncation.

Measuring error in $H^\alpha$ (rather than the $L^2$-type norms considered earlier) changes the geometry of the problem via the Fourier weights $(1+\norm{\mathbf n}_2^2)^{\alpha}$. Indeed,
$$
 \norm{g}_{H^\alpha}^2 = \sum_{\mathbf n\in\mathbb{Z}^2} (1+\norm{\mathbf n}_2^2)^{\alpha}\abs{\hat g_{\mathbf n}}^2 = \norm{(I-\Delta)^{\alpha/2} g}_{L^2}^2.
$$
Thus minimizing $\norm{Kf - \tilde K f}_{H^\alpha}$ is equivalent to minimizing the $L^2$-error after applying the smoothing/amplifying operator $(I-\Delta)^{\alpha/2}$. For $\alpha<0$, $(I-\Delta)^{\alpha/2}$ acts as a smoothing operator, so high frequencies are strongly down-weighted, and the metric is dominated by large-scale structures. In this case, the corresponding output spaces $\mathcal Y_h(\alpha)$ remain relatively low dimensional, and the relative errors are modest. As $\alpha$ increases, $(I-\Delta)^{\alpha/2}$ upweights high frequencies, $\dim Y_h(\alpha)$ grows quickly, and the $H^\alpha$-metric becomes increasingly sensitive to small-scale discrepancies. In this regime, the fixed polynomial operator space $G_{\Lambda}(28)$ limits how well fine-scale features can be resolved, which is reflected in the rise and eventual saturation of the error curves with $\alpha$. 

The qualitative effect of this norm choice is illustrated in \cref{fig:NS_action}, where we compare the action of $\tilde{K}_{V(28;-2)}$ to the true operator on the median-error test input: the $H^{-2}$-evaluation emphasizes coherence at large scales and largely ignores small-scale mismatch.

On this fixed, externally generated dataset, induced discrete sampling preserves the conditioning of the Gram matrix, so instability of the least-squares problem is not the dominant source of error. Instead, the evaluation norm $H^\alpha$ acts as a tunable geometric regularizer at the level of the output: negative $\alpha$ discount small-scale errors and yield a relatively easy, low-dimensional approximation problem, whereas positive $\alpha$ emphasize fine scales, expose the finite-resolution limitations of the chosen operator space $V(28;\alpha)$, and make the learning task substantially harder. In this sense, the choice of norm operationalizes  regularization in high-dimensional outputs by specifying which spatial scales of the Navier–Stokes dynamics the surrogate is required to reproduce accurately.

\begin{figure}[htb]
    \centering
    \includegraphics[width=0.7\linewidth]{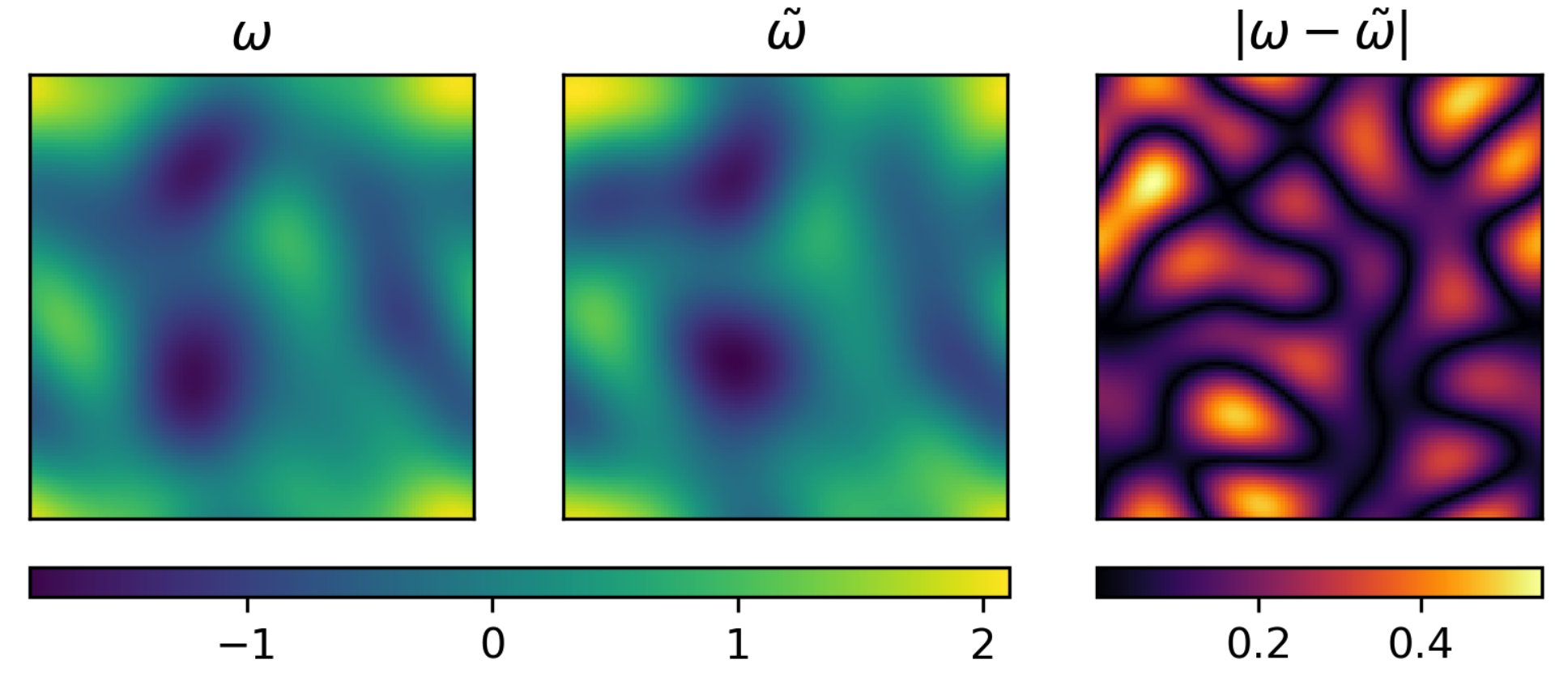}
    \caption{A comparison of $\omega = K(f)$ and $\tilde{\omega} = \tilde{K}_{V(28;-2)}(f)$ on the median error input test sample, $f$. The relative $H^{-2}$ error in this case is $1.05\times 10^{-1}$. }
    \label{fig:NS_action}
\end{figure}

\section{Conclusion}
\label{sec:conclusion}

This work develops a principled framework for learning deterministic operators in the Bochner space $L^2_\rho(\mathcal X;\mathcal Y)$ using optimal weighted least squares. On the approximation side, we construct two explicit families of operator bases---rank-one linear operators and rank-one polynomial operators---and prove density in appropriate operator classes under mild assumptions. These constructions connect naturally to kernel learning (Hilbert--Schmidt kernels in the linear case, kernelized integral representations in the polynomial case) while keeping approximation and error control at the level of operator norms. On the sampling side, we derive implementable optimal measures and weights, including discrete, data-driven variants, that yield uniformly well-conditioned Gram matrices and near-optimal sample complexity. 

Beyond providing existence results, the analysis gives nonasymptotic error bounds and sample-complexity guarantees in operator norms. Once an approximation space $V \subset L^2_\rho(\mathcal X;\mathcal Y)$ has been chosen, the theory can be read as a set of design rules: for a prescribed accuracy in $\norm{K - \tilde K_V}_{L^2_\rho(\mathcal X;\mathcal Y)}$, it specifies a sufficient number of training samples---drawn from an operator-level Christoffel distribution---to achieve this accuracy with high probability. This stands in contrast to most current neural operator approaches, where sample complexity and conditioning are difficult to characterize and architectural choices are typically made heuristically.

The numerical experiments both illustrate the theory and highlight practical design choices that ultimately control accuracy. In the Poisson example, operator-level Christoffel sampling leads to uniformly conditioned weighted least-squares systems, and the estimator recovers the diagonal structure of the spectral solution operator as well as the Green’s kernel, validating the linear theory in a setting with a known target kernel. In the viscous Burgers' experiments, decreasing viscosity shifts output energy toward high frequencies; at fixed sample budget, we observe that accuracy is governed primarily by discretization design---namely, the choice of anisotropic polynomial index sets on the input side and the truncation dimension $d_{\mathrm{out}}$ on the output side---while optimal sampling maintains stability even in deliberately undersampled regimes. In the Navier--Stokes case, based on a fixed externally generated dataset, the induced discrete sampling measure again preserves Gram conditioning, and the choice of evaluation norm $H^\alpha$ acts as a tunable geometric regularizer. Across these examples, the experiments confirm that explicit approximation spaces coupled with optimal sampling deliver stable, sample-efficient operator learning. They also show that, once sampling is chosen according to the theory, the dominant limitations on accuracy come from the approximation space itself---namely, the truncation of the input and output bases and the choice of polynomial index set. 

Several directions emerge from this work. First, the experiments indicate that fixed choices of $d_{\mathrm{in}}$, $d_{\mathrm{out}}$, and $\Lambda$ can dominate the achievable error, even when the sampling measure is optimal. While the truncation levels $d_{\mathrm{in}}$ and $d_{\mathrm{out}}$ can be informed by the energy-based procedures used here, a natural next step is to develop adaptive enrichment strategies for the operator space, for instance via greedy subspace selection that augments $\Lambda$ in directions suggested by residuals of the learned operator. Second, the optimal weighted least-squares estimators developed here can be combined with operator-valued Gaussian process priors to provide uncertainty quantification for both forward and inverse problems, equipping learned operators with principled error bars in $L^2_\rho(\mathcal X;\mathcal Y)$. Third, for applications where structure---such as conservation laws, symmetries, or dissipation---is important, designing operator bases that preserve this structure by construction remains an open and appealing challenge. Finally, while our focus has been on theory and single-method studies, a natural continuation is to benchmark optimal weighted least-squares operator learning against neural operator architectures on standard PDE datasets, comparing not only final accuracy but also sample requirements, training cost, and robustness across norms and discretizations; in such comparisons, the quantitative sample-complexity guarantees and conditioning results developed here provide a clear, theory-based baseline.

\subsection*{Acknowledgments}

This work was supported by Sandia National Laboratories Laboratory Directed Research Development (LDRD) program. Sandia National Laboratories is a multimission laboratory managed and operated by National Technology \& Engineering Solutions of Sandia, LLC, a wholly owned subsidiary of Honeywell International Inc., for the U.S. Department of Energy’s National Nuclear Security Administration under contract DE-NA0003525. SAND2025-14999O. This paper describes objective technical results and analysis. Any subjective views or opinions that might be expressed in the paper do not necessarily represent the views of the U.S. Department of Energy or the United States Government. This article has been authored by an employee of National Technology \& Engineering Solutions of Sandia, LLC under Contract No. DE-NA0003525 with the U.S. Department of Energy (DOE). The employee owns all right, title and interest in and to the article and is solely responsible for its contents. The United States Government retains and the publisher, by accepting the article for publication, acknowledges that the United States Government retains a non-exclusive, paid-up, irrevocable, world-wide license to publish or reproduce the published form of this article or allow others to do so, for United States Government purposes. The DOE will provide public access to these results of federally sponsored research in accordance with the DOE Public Access Plan https://www.energy.gov/downloads/doe-public-access-plan.

In addition, John Turnage and Akil Narayan were partially supported by the U.S.\ Air Force Office of Scientific Research (AFOSR) under Grant Number FA9550-23-1-0749.

%~~~~~~~~~ Bibliography
%\newpage 

\fancyhf{} 
\renewcommand{\headrulewidth}{0pt}
\fancyfoot[C]{\hrulefill\quad\raisebox{-3pt}{Page \thepage \hspace{1pt} of \pageref{LastPage}}\quad\hrulefill}

\phantomsection 
\addcontentsline{toc}{section}{References} 

\printbibliography 

\newpage

\appendix
\section{Appendices}

\subsection{Technical Results}\label{subsec: Proofs in Appendix}

\subsubsection{Preliminaries for Technical Discussions}
Associated to the semi-norm $\|\cdot\|_{L^2_{\rho,M}}$ on candidate operators from $V$ defined in \eqref{eq:discrete-ls}, we will make use of the semi-inner product,
\begin{align}\label{eq:discrete_IP}
\ip{A,B}_{L^2_{\rho,M}} \coloneqq \frac{1}{M}\sum_{i\in [M]}w(f^i)\ip{Af^i, Bf^i}_{\mathcal{Y}}.
\end{align}
Just as with standard least squares problems, solutions to the minimization problem \eqref{eq:discrete-ls} involving the norm $\|\cdot\|_{L^2_{\rho,M}}$ are computed by $\|\cdot\|_{L^2_{\rho,M}}$-projecting $K$ onto $V$, with the caveat that if $\|\cdot\|_{L^2_{\rho,M}}$ is only a semi-norm, then such projections identify affine spaces of minimizers instead of unique minimizers. In detail, we compute the $\|\cdot\|_{L^2_{\rho,M}}$-projection of $K$ onto $V$ by expanding a feasible solution $A = \sum_{n \in [N]} c_n \Phi_n$ and computing the following inner products:
\begin{align*}
  \ip{A, \Phi_k}_{L^2_{\rho,M}} &= \ip{\sum_{n \in [N]} c_n \Phi_n, \Phi_k}_{L^2_{\rho,M}} = \sum_{n \in [N]} c_n \ip{\Phi_n, \Phi_k}_{L^2_{\rho,M}},  \\
  \ip{K + \eta, \Phi_k}_{L^2_{\rho,M}} &= \frac{1}{M} \sum_{i \in [M]} w(f^i) \ip{K(f^i) + \eta(f^i), \Phi_k(f^i)}_{\mathcal{Y}} = \frac{1}{M} \sum_{i \in [M]} w(f^i) \ip{g^i, \Phi_k(f^i)}_{\mathcal{Y}}.
\end{align*}
A necessary and sufficient condition for $A$ to be the $\|\cdot\|_{L^2_{\rho,M}}$- noisy projection of $K$ onto $V$ is that the two inner products above are equal for all $k \in [N]$. Hence, $\bs{c} = (c_n)_{n \in [N]}$ should satisfy the normal equations,
\begin{align}\label{eq:normal-equations}
  \bs{G} \bs{c} = \bs{g}
\end{align}
where,
\begin{align*}
  \mathbf{G}_{{j},{k}}\coloneqq \ip{\Phi_{ {j}}, \Phi_{{k}}}_{L^2_{\rho,M}} &= \frac{1}{M} \sum_{i \in [M]} w(f^i) \ip{\Phi_j(f^i) \Phi_k(f^i)}_{\mathcal{Y}}, &  \mathbf{g}_{{j}} &= \frac{1}{M}\sum_{i\in [M]}w(f^i)\ip{g^i, \Phi_{j}(f^i)}_{\mathcal{Y}}.
\end{align*}
When $\|\cdot\|_{L^2_{\rho,M}}$ is a norm on $V$, $\bs{G}$ is invertible, and there is a unique solution $\tilde{K}^*_{V}$ to \eqref{eq:discrete-ls} whose $V$-expansion coefficients are given by $\bs{c} = \bs{G}^{-1} \bs{g}$. Otherwise, there is an affine space of minimizers whose dimension is $\dim \mathrm{ker}(\bs{G})$, and, for example, the unique $L^2_\rho$-norm minimizing solution has $V$-expansion coefficients $\bs{c} = \bs{G}^{\dagger} \bs{g}$, where $\bs{G}^{\dagger}$ is the Moore-Penrose pseudoinverse.

The strong law of large numbers along with the assumptions \eqref{eq:asymptotic-assumptions} guarantee that with probability 1,
\begin{align*}
  (\bs{G})_{j,k} = \ip{\Phi_k, \Phi_j}_{L^2_{\rho,M}} &\xrightarrow[M\uparrow \infty]{\eqref{eq:asymptotic-assumptions}} \ip{\Phi_k, \Phi_j}_{L^2_\rho} = \delta_{j,k}, \\
  (\bs{g})_j = \ip{K + \eta,\Phi_j}_{L^2_{\rho,M}} &\xrightarrow[M\uparrow \infty]{\eqref{eq:asymptotic-assumptions}} \ip{K + \eta, \Phi_j}_{L^2_\rho}.
\end{align*}
In this case, and in the absence of noise, the normal equations \eqref{eq:normal-equations} assert that $c_j = \ip{K,\Phi_j}_{L^2_\rho}$, i.e., that $\tilde{K}_{V}^\ast = K_{V}^\ast$. We remark that \eqref{eq:asymptotic-assumptions} can be relaxed so long as the resulting Gramian $\bs{G}$ concentrates to a bounded, invertible matrix for large $M$. For simplicity we will continue to assume \eqref{eq:asymptotic-assumptions}.

Of course, practical interest restricts us to the finite $M$ case, and so we must quantitatively establish when and how $\bs{G}$ and $\bs{g}$ concentrate to their infinite-$M$ counterparts. 

\subsubsection{Proof of Lemma \ref{lem:comp_norms}}\label{Pf: comp_norms}
\begin{proof}
The first equivalence is an immediate consequence of the definitions of the condition number and the spectral norm. For the second equivalence, we note that the
expression on the right hand side is itself equivalent to
$$\abs{\norm{A}_{L^2_{\rho,M}}^2 - \norm{A}_{L^2_{\rho}}^2} \le \delta \norm{A}_{L^2_{\rho}}^2 \hspace{0.25cm}(\forall A\in V).$$
Letting $\bs{a}$ be the vector representing $A\in V$ in the orthonormal basis $\{\Phi_n\}_{n\in [N]},$ we have
   $$\norm{A}_{L^2_{\rho,M}}^2 = \bs{a}^T\bs{G}\bs{a} \hspace{0.25cm}\text{and}\hspace{0.25cm} \norm{A}_{L^2_\rho}^2 = \bs{a}^T\bs{I}\bs{a}.$$
   Thus, if $\norm{\bs G - \bs I }_2 \le \delta,$ then
   $$\abs{\norm{A}^2_{L^2_{\rho,M}} - \norm{A}^2_{L^2_\rho} } = \abs{\bs{a}^T(\bs{G}-\bs{I})\bs{a}} \le \norm{\bs{G}-\bs{I}}_2\norm{\bs{a}}_2^2 \le \delta\norm{A}_{L^2_\rho}^2.$$
   Conversely, if $\norm{\bs G - \bs I}_{2} > \delta,$ then there exists an $\bs a \in \R^N$ such that
   $$\abs{\bs a^T(\bs G- \bs I)\bs a} >\delta \norm{\bs a}_2^2,$$
   namely, any $\bs a$ lying in the eigenspace of $(\bs G - \bs I)$ corresponding to the maximum norm eigenvalue of $(\bs G - \bs I).$
\end{proof}

\subsubsection{Proof of Theorem \ref{thm:Samp Complexity}}\label{Pf: Sample Complexity}
\begin{proof}
    This proof is a relatively straightforward application of spectral tail bounds for sums of random matrices. We will make use of the following matrix Chernoff bound, which may be found in \cite{tropp2012user}. 
    
    If $\{\mathbf{Y}_i\}_{i\in [M]}$ are independent $N\times N$ symmetric positive semidefinite matrices satisfying
    $\lambda_{\max}(\mathbf Y_i) = \norm{\mathbf Y_i}_2 \le R $
    almost surely, then with
    $$\zeta_{\min}\coloneqq \lambda_{\min}\left(\sum_{i\in [M]}\E[\mathbf Y_i]\right)\hspace*{0.5cm}\text{and}\hspace*{0.5cm} \zeta_{\max }\coloneqq \lambda_{\max}\left(\sum_{i\in [M]}\E[\mathbf Y_i]\right),$$
    one has the following bounds: 
    \begin{align}\label{eq:Chernoff Bounds}\begin{cases}
        \Pr \left\{\lambda_{\min}\left(\sum_{i\in [M]}\mathbf Y_i\right)\le (1-\delta)\zeta_{\min}\right\} \le N \left(\frac{e^{-\delta}}{(1-\delta)^{1-\delta}}\right)^{\zeta_{\min}/R},&  \delta \in [0,1)\\
        \Pr \left\{\lambda_{\max}\left(\sum_{i\in [M]}\mathbf Y_i\right)\ge (1+\delta)\zeta_{\max}\right\} \le N \left(\frac{e^{\delta}}{(1+\delta)^{1+\delta}}\right)^{\zeta_{\max}/R},&  \delta \in \R_+
    \end{cases}.\end{align}
    
    Now, observe that the Gramian for the weighted least-squares problem can be written  
    $$\bs G = \sum_{i\in [M]}\mathbf X_i,$$ where $\mathbf X_i$ are i.i.d copies of the random symmetric, positive semidefinite matrix
    \begin{align}
    \mathbf X = \mathbf X(f) \coloneqq \frac{1}{M}\left(w(f)\ip{\Phi_{j}(f),\Phi_{k}(f)}_{\mathcal{Y}}\right)_{( {j,k})\in [N]^2}.
    \end{align} 
    Since $\mu, w$ are assumed to be given as in \eqref{eq:asymptotic-assumptions}, we have
    $\sum_{i\in [M]}\E_{f\sim \mu}[\mathbf{X}_i] = M\E[\mathbf X] = \bs I$, so that $ \zeta_{\min} = \zeta_{\max}= 1.$ Then since $\bs G$ is symmetric,
    $$\Pr\left\{\norm{\bs G - \bs I}_2  \ge \delta\right\} = \Pr \left\{ \lambda_{\max}(\bs G) \ge 1 + \delta \right\} + \Pr\left\{\lambda_{\min}(\bs{G}) \le 1 - \delta \right\}.$$ Summing the Chernoff bounds \eqref{eq:Chernoff Bounds}, we find
    \begin{align*}
        \Pr\left\{\norm{\bs G - \bs I}_2   \ge \delta\right\} & \le N\left[ \left(\frac{e^{\delta}}{(1+\delta)^{1+\delta}}\right)^{1/R} +  \left(\frac{e^{-\delta}}{(1-\delta)^{1-\delta}}\right)^{1/R}\right].
    \end{align*}
    Since 
    $\frac{e^{\delta}}{(1 + \delta)^{1+\delta}} \le \frac{e^{-\delta}}{(1-\delta)^{1-\delta}}$
    for all $\delta \in (0,1)$,
    $$\Pr\left\{\norm{\bs G - \bs I}_2   \ge \delta\right\}  \le 2N\left(\frac{e^{-\delta}}{(1-\delta)^{1-\delta}}\right)^{1/R} = 2N\exp\left(-\frac{1}{c_\delta R}\right),$$
    where $c_{\delta}$ is as in \eqref{eq:SampleComplexity}. It remains to be shown that $\norm{\mathbf X}_2$ may be bounded by some $R \in \R_+$. 
    By Cauchy-Schwarz, 
    \begin{align*}
        \norm{\mathbf X}_2&\le \frac{w(f)}{M}\norm{\left(\norm{\Phi_{ {j}}(f)}_{\mathcal{Y}} \norm{\Phi_{ {k}}(f)}_{\mathcal{Y}} \right)_{( {j,k})\in [N]^2}}_2 = \frac{w(f)}{M} \norm{\mathbf{\Phi}\mathbf{\Phi}^T}_2,
    \end{align*}
    where $\mathbf{\Phi}_{ {j}} = \norm{\Phi_{  j}(f)}_{\mathcal{Y}}.$ Thus, 
    $$\norm{\mathbf X}_2 \le \frac{w(f)}{M}\norm{\mathbf \Phi}_2^2 = \frac{w(f)}{M}\sum_{  j \in [N]}\norm{\Phi_{  j}(f)}_{\mathcal{Y}}^2 \le \frac{\kappa_w^{\infty}}{M} \coloneqq R,$$
    and we find that 
    $$\Pr\left\{\norm{\bs G - \bs I}_2   \ge \delta\right\}  \le 2N \exp\left(- \frac{M}{c_{\delta} \kappa^{\infty}_w}\right).$$
    Taking $M \ge c_{\delta}k^{\infty}_{w} \log\left(\frac{2N}{\epsilon}\right)$, we conclude that
    $\Pr\{\norm{\bs G-\bs I}_2 \ge \delta\} \le \epsilon,$
    which completes the proof. 
\end{proof}

\subsubsection{Proof of Theorem \ref{thm: samp complexity block learning}}\label{Pf: Block learning}
\begin{proof}
Let $\{\varphi_j\}_{j\in [S]}$ and $\{\psi_i\}_{i \in [\dim Y_h]}$ be orthonormal bases for $P\subset L^2_\rho(\mathcal X; \R)$ and $\mathcal Y_h$ respectively. Define the rank-one operators 
$$\Phi_{ij}(f) = \varphi_j(f)\psi_i.$$
The operators $\{\Phi_{ij}\}$ form an orthonormal family in $L^2_\rho(\mathcal X; \mathcal Y)$ since 
$$\ip{\Phi_{ij}, \Phi_{i', j'}}_{L^2_\rho(\mathcal X; \mathcal Y)} = \delta_{i, i'} \ip{\varphi_j, \varphi_{j'}}_{L^2_\rho(\mathcal X; \R)} = \delta_{i, i'}\delta_{j,j'}.$$
Similarly, the discrete inner-product defined on the samples $\{f^k\}_{k\in [M]}$ satisfies
 $$\ip{\Phi_{i, j}, \Phi_{i', j'}}_{L^2_{\rho,M}} = \frac{\delta_{i, i'}}{M}\sum_{k\in [M]}w(f^k)\varphi_{j}(f^k)\varphi_{j'}(f^k).$$
Thus, the empirical Gram matrix associated to the subspace $V = \mathrm{span}_{(i,j) \in [\dim \mathcal Y_h]\times [S]}\{\Phi_{ij}\}$ decomposes as the direct sum
$$\bs G = \bigoplus_{i\in [\dim \mathcal Y_h]} \bs G^{i},$$
where $\bs G^{i}$ is the empirical Gram matrix for the scalar-valued basis $\{\varphi_{j}\}_{j\in [S]}$ evaluated at the shared sample set $\{f^k\}_{k\in [M]}.$ The analysis of each block $\bs G^{i}$ is independent and identical, so we may apply Theorem \ref{thm:Samp Complexity} to each. We emphasize that since each $G^{i}$ is defined by the same sample set, a union bound over $i \in [\dim \mathcal Y_h]$ is unnecessary: Each block is guaranteed to concentrate as long as the sample size satisfies
$$M \ge c_{\delta}\kappa_{w}^{\infty}(P) \log \left(\frac{2S}{\epsilon}\right),$$
where $\kappa_w^{\infty}(P)$ is the weighted Nikolskii constant associated to the subspace $P$. Under the optimal sampling measure associated with $P$, $\kappa_w^{\infty}(P) = \dim P = S$, completing the proof. 
\end{proof}

\subsubsection{Proof of Theorem \ref{thm: L2 error in expectation}} \label{Pf: Proof of L2 Err in Expectation}
\begin{proof}
  Note first $T_\tau$ is norm non-expansive on $\mathcal{Y}$ by its definition \eqref{def Trunc LSP}:
  \begin{align*}
    \left\| T_\tau g \right\|_{\mathcal{Y}} \leq \left\| g \right\|_{\mathcal{Y}}.
  \end{align*}
  Then assuming $\tau$ is chosen as in \eqref{eq:tau-choice}, we have for almost all $f \in \mathcal{X}$,
  \begin{align*}
    \left\| K f - \tilde{K}^T_{V} f \right\|_{\mathcal{Y}} = \left\| T_\tau \left( K f - \tilde{K}^\ast_V f \right) \right\|_{\mathcal{Y}} \leq 
  \left\|  K f - \tilde{K}^\ast_V f \right\|_{\mathcal{Y}},
  \end{align*}
  and therefore,
  \begin{align}\label{def: trunc ineq}
    \norm{K-\tilde{K}^T_{V}}_{L^2_{\rho}(\mathcal X; \mathcal Y)} \le \norm{K -\tilde{K}^*_{V}}_{L^2_{\rho}(\mathcal X;\mathcal Y)}.
  \end{align}

Now, we denote by $\dif \mu^M \coloneqq \otimes^M\dif \mu$ the probability measure of the draw $\{f^i\}_{i\in [M]}\subset \mathcal X$ and partition the set $\Omega$ of all possible draws into two events:
\begin{align}\label{def: Prob Space of Draw}\Omega_+ \coloneqq \{\norm{\bs G - \bs I}_{2} \le \delta\} \hspace{0.5cm}\text{and}\hspace{0.5cm} \Omega_- \coloneqq \Omega \setminus \Omega_+.\end{align}
By \cref{thm:Samp Complexity}, the assumptions on $M$ guarantee that
$\Pr\{\Omega_-\} \le \epsilon.$ 
Thus, for the truncated least squares problem \eqref{def Trunc LSP}, we have
\begin{align*}
    \E\left[\norm{K-\tilde{K}^{T}_{V}}^2_{L^2_\rho}\right] \le \int_{\Omega_+} \norm{K-\tilde{K}^*_{V}}^2_{L^2_\rho} \dif \mu^M + 4\tau^2\epsilon,
\end{align*}
where we have used both $\norm{K-\tilde{K}^T_{V}} \le 2 \tau$ and the inequality \eqref{def: trunc ineq}. Similarly, for the conditioned least squares problem \eqref{def: Conditioned LSP}, we have
$$\E\left[\norm{K - \tilde{K}^C_{V}}_{L^2_\rho}^2\right] \le \int_{\Omega_+}\norm{K-\tilde{K}^*_{V}}^2_{L^2_{\rho}}\dif \mu^M  + \epsilon\norm{K}_{L^2_\rho}^2,$$
where we have used that $\tilde{K}^C_{V}\vert_{\Omega_-} = 0$. 

For the remainder of the proof, we place ourselves in the event $\Omega_+$. In this event, the $L^2_{\rho,M}$-orthogonal projection onto $V$, which we denote $\Pi^M_{V}$, is unique and fixes $V$. Thus, with $R \coloneqq (K - \tilde{K}^*_{V}) \perp V$, we have
$$K - \tilde{K}_{V} = K - K^* + \Pi^M_{V}K^* - \Pi_{V}^M(K+\eta)  = R - \Pi_{V}^MR - \Pi^M_{V}\eta,$$
where we have used $\tilde{K}^*_{V} = \Pi^M_{V}(K+\eta), K^*_{V} = \Pi_{V}K,$ and $\Pi^M_{V}\Pi_{V}K = \Pi_{V}K.$
Then,
\begin{equation}\label{ineqs_str exp proof}
\begin{aligned}
\norm{K-\tilde{K}_{V}}^2_{L^2_\rho} &= \norm{R}_{L^2_\rho}^2 + \norm{\Pi^M_{V}R + \Pi^M_{V}\eta}_{L^2_\rho}^2\\& = \epsilon_{V}(K)^2 + \norm{\Pi^M_{V}R}^2_{L^2_\rho} + 2\ip{\Pi^M_{V}R, \Pi^M_{V}\eta}_{L^2_\rho} + \norm{\Pi^M_{V}\eta}^2_{L^2_\rho} \\
& \le \epsilon_{V}(K)^2 + \norm{\Pi^M_{V}R}^2_{L^2_\rho} + 2\norm{\Pi^M_{V}R}_{L^2_\rho}\norm{\Pi^M_{V}\eta}_{L^2_\rho} + \norm{\Pi^M_{V}\eta}^2_{L^2_\rho}\\
&\le \epsilon_{V}(K)^2 + 2\norm{\Pi^M_{V}R}^2_{L^2_\rho} + 2\norm{\Pi^M_{V}\eta}^2_{L^2_\rho}
\end{aligned}
\end{equation}

Now, for any $A\in L^2_\rho(\mathcal X;\mathcal Y),$ $\Pi^M_{V}A = \sum_{k\in [N]}a_k\Phi_k$, where the expansion coefficients $a_k$ are given by the normal equations $\bs G \bs a = \bs d$, with $d_j = \ip{A,\Phi_k}_{L^2_{\rho,M}}.$ Since we are in the event $\Omega_+,$ $\norm{\bs G^{-1}}_2 \le (1-\delta)^{-1},$ and it follows that 
$$\norm{\Pi^M_{V}A}^2_{L^2_\rho} \le \norm{\bs G^{-1}}_2^2\norm{\bs d}_2^2 \le \frac{1}{(1-\delta)^2} \sum_{k\in [N]}\abs{\ip{A,\Phi_k}_{L^2_{\rho,M}}}^2.$$
Thus, for any $A\in L^2_\rho,$
\begin{align*}
\E\left[\norm{\Pi^M_{V}A}_{L^2_\rho}^2\right] & \le \frac{1}{(1-\delta)^2} \sum_{k\in[N]} \E\left[\left(\frac{1}{M}\sum_{i\in[M]}w(f^i)\ip{A(f^i),\Phi_k(f^i)}_{\mathcal Y}\right)^2\right]\\
& = \frac{1}{M^2(1-\delta)^2} \sum_{k\in[N]}\left( \sum_{i\in[M]} \E\left[w(f^i)^2\ip{A(f^i),\Phi_k(f^i)}_{\mathcal Y}^2\right] \right. \\
&\hspace{0.5cm}\left.+ \sum_{j\neq i}\E\left[w(f^i)w(f^j)\ip{A(f^i),\Phi_k(f^i)}_{\mathcal Y}\ip{A(f^j),\Phi_k(f^j)}_{\mathcal Y}\right] \right)\\
& = \frac{1}{M^2(1-\delta)^2}\sum_{k\in [N]}\left(M \E\left[w(f)^2\ip{A(f),\Phi_k(f)}_{\mathcal Y}^2\right] + M(M-1)\E\left[w(f)\ip{A(f), \Phi_k(f)}_{\mathcal Y}\right]^2\right)\\
& = \frac{1}{(1-\delta)^2}\sum_{k\in [N]}\left(\frac{1}{M}\int_{\mathcal X}w(f)^2\ip{A(f),\Phi_k(f)}_{\mathcal Y}^2 \dif \mu(f) + \left(1-\frac{1}{M}\right) \ip{A,\Phi_k}_{L^2_\rho}^2\right)\\
& = \frac{1}{(1-\delta)^2}\sum_{k\in [N]}\left(\frac{1}{M}\int_{\mathcal X}w(f)\ip{A(f),\Phi_k(f)}_{\mathcal Y}^2\dif \rho(f) + \left(1-\frac{1}{M}\right) \ip{A,\Phi_k}_{L^2_\rho}^2\right)\\
& \le \frac{1}{(1-\delta)^2}
\left(\frac{1}{M}\int_{\mathcal X}\sum_{k\in [N]} w(f)\norm{A(f)}_{\mathcal Y}^2\norm{\Phi_k(f)}_{\mathcal Y}^2\dif \rho(f) + \left(1-\frac{1}{M}\right)\norm{A}_{L^2_\rho}^2\right)\\
& \le \frac{1}{(1-\delta)^2}\left(\frac{\kappa_w^{\infty}\norm{A}_{L^2_\rho}^2}{M} + \norm{A}^2_{L^2_\rho}\right)\\
& \le \frac{1}{(1-\delta)^2}\left(\frac{1}{c_{\delta}\log\left(2N/\epsilon \right)}  + 1\right) \norm{A}^2_{L^2_\rho}
\end{align*}
where we have used the independence of the draw, the assumption that $w\dif \mu = \dif \rho$, the definition of $\kappa^{\infty}_{w}$ in \eqref{def: kappa_inf}, and the sample complexity assumption $M\ge c_{\delta}\kappa^{\infty}_w\log(2N/\epsilon).$ Now, observe that for any $A\perp V,$ the second term in the fourth equality above is null, and the estimate can be refined accordingly. Thus, for $R = K-K^*_{V}\perp V,$ we find
$$\E\left[\norm{\Pi^M_{V}R}^2_{L^2_\rho}\right] \le \frac{\gamma(N)}{(1-\delta)^2}\epsilon_{V}(K)^2$$
and for the noise $\eta$, we find 
$$\E\left[\norm{\Pi^M_{V}\eta}^2_{L^2_\rho}\right]\le \frac{\gamma(N) + 1}{(1-\delta)^2}\norm{\eta}^2_{L^2_\rho}.$$
Combining these estimates for \eqref{ineqs_str exp proof} with those for the event $\Omega_-$, we have for the truncated least squares problem
$$\E\left[\norm{K - \tilde{K}^T_{V}}_{L^2_\rho}^2\right] \le  \left(1 + \frac{2\gamma(N)}{(1-\delta)^2}\right)\epsilon_{V}(K)^2 + \frac{2(\gamma(N) + 1)}{(1-\delta)^2}\norm{\eta}^2_{L^2_\rho}  + 4\tau^2\epsilon,$$
and for the conditioned least squares problem
$$\E\left[\norm{K - \tilde{K}^C_{V}}_{L^2_\rho}^2\right] \le  \left(1 + \frac{2\gamma(N)}{(1-\delta)^2}\right)\epsilon_{V}(K)^2 + \frac{2(\gamma(N) + 1)}{(1-\delta)^2}\norm{\eta}^2_{L^2_\rho}  + \epsilon\norm{K}^2_{L^2_\rho}.,$$
\end{proof}

\subsubsection{Proof of Theorem \ref{thm: Truncated Approx Error in Prob}}\label{Pf: Trunc Approx Error in Prob}

\begin{proof}
Let $\{f^i\}_{i\in[M]}\subset \mathcal X$ and $w: \mathcal X \to \R_+$. Suppose that there exists a $\delta \in (0,1)$ for which 
\begin{align}\label{eq: norm_comp_trunc_DLS}
    (1-\delta)\norm{B}^2_{L^2_\rho(\mathcal X;\mathcal Y)} \le \norm{B}_{L^2_{\rho,M}}^2 \hspace{0.5cm}(\forall B \in V_h).
\end{align}
It follows that the semi-inner product $\ip{\cdot, \cdot}_{L^2_{\rho,M}}$ defined in \eqref{eq:discrete_IP} is a true inner product on $V_h,$ so the solution $\tilde{K}_{V_h}$ to \eqref{eq:trunc_disc_LS} is unique. Equivalently, for all $B\in V_h$ the variational equations 
$$\ip{\tilde{K}_{V_h}, B}_{L^2_{\rho,M}} = \ip{K, B}_{L^2_{\rho,M}} + \ip{\eta, B}_{L^2_{\rho,M}} = \ip{\Pi_{\mathcal{Y}_h}K, B}_{L^2_{\rho,M}} + \ip{\eta, B}_{L^2_{\rho,M}}$$ are satisfied, where the second equality comes from the definition of the $L^2_{\rho,M}$-norm and the assumption $B\in V_h.$ Then, since $(\tilde{K}_{V_h} - B)\in V_h$, we have 
$$\ip{\tilde{K}_{V_h}, \tilde{K}_{V_h}-B}_{L^2_{\rho,M}}  = \ip{\Pi_{\mathcal{Y}_h}K, \tilde{K}_{V_h}-B}_{L^2_{\rho,M}} + \ip{\eta, \tilde{K}_{V_h}-B}_{L^2_{\rho,M}}.$$
Subtracting $\ip{B, \tilde{K}_{V_h} - B}_{L^2_{\rho,M}}$ from both sides yields
$$\norm{\tilde{K}_{V_h}-B}_{L^2_{\rho,M}}^2 = \ip{\Pi_{\mathcal{Y}_h}K - B, \tilde{K}_{V_h}-B}_{L^2_{\rho,M}} + \ip{\eta, \tilde{K}_{V_h}-B}_{L^2_{\rho,M}}.$$
Applying the Cauchy-Schwartz inequality to the right hand side, we find
$$\norm{\tilde{K}_{V_h}-B}_{L^2_{\rho,M}} \le \norm{\Pi_{\mathcal{Y}_h}K - B}_{L^2_{\rho,M}} + \norm{\eta}_{L^2_{\rho,M}}.$$ Finally, applying \eqref{eq: norm_comp_trunc_DLS} to the left side of the inequality, we deduce that
\begin{align}\label{eq:trunc_DLS_ineq}
\norm{\tilde{K}_{V_h}-B}_{L^2_\rho(\mathcal X;\mathcal Y)} \le \frac{1}{\sqrt{1-\delta}}\left(\norm{\Pi_{\mathcal{Y}_h}K - B}_{L^2_{\rho,M}} + \norm{\eta}_{L^2_{\rho,M}}\right).
\end{align}
Now, we take $A\in V$ arbitrary and write $\Pi_{\mathcal{Y}_h}A = B.$ Then 
\begin{align*}
    \norm{K - \tilde{K}_{V_h}}_{L^2_\rho(\mathcal X;\mathcal Y)}  &= \norm{\tilde{K}_{V_h} - B + B - \Pi_{\mathcal{Y}_h}K + \Pi_{\mathcal{Y}_h}K - K}_{L^2_\rho(\mathcal X;\mathcal Y)}\\
    &\le \norm{\tilde{K}_{V_h} - B}_{L^2_\rho} + \norm{\Pi_{\mathcal{Y}_h}(A-K)}_{L^2_\rho} + \norm{\Pi_{\mathcal{Y}_h}K - K}_{L^2_\rho}.
\end{align*}
Applying the bound \eqref{eq:trunc_DLS_ineq} to the first term yields
\begin{align*}
    \norm{K - \tilde{K}_{V_h}}_{L^2_\rho(\mathcal X;\mathcal Y)} 
    &\le \frac{1}{\sqrt{1-\delta}}\left(\norm{\Pi_{\mathcal{Y}_h}(K - A)}_{L^2_{\rho,M}} + \norm{\eta}_{L^2_{\rho,M}}\right) \\ &\hspace{0.5cm}+ \norm{\Pi_{\mathcal{Y}_h}(A-K)}_{L^2_\rho} + \norm{\Pi_{\mathcal{Y}_h}K - K}_{L^2_\rho}.
\end{align*}
Finally, we note that since $\Pi_{\mathcal{Y}_h}$ is non-expansive on $\mathcal Y$, $\norm{\Pi_{\mathcal{Y}_h} A} \le \norm{A}$ for both the discrete $L^2_{\rho,M}$ and the continuous $L^2_\rho$ norms on  $L^2_\rho(\mathcal X; \mathcal Y).$ Thus, 
\begin{align*}
    \norm{K-\tilde{K}_{V_h}}_{L^2_\rho(\mathcal X;\mathcal Y)} &\le \norm{K-A}_{L^2_\rho(\mathcal X;\mathcal Y)} +\frac{1}{\sqrt{1-\delta}}\norm{K-A}_{L^2_{\rho,M}}\\
    &\hspace{0.5cm} +\norm{K-\Pi_{\mathcal{Y}_h}K}_{L^2_\rho(\mathcal X; \mathcal Y)} + \frac{1}{\sqrt{1-\delta}}\norm{\eta}_{L^2_{\rho,M}}
\end{align*}
for arbitrary $A\in V$, and the result follows. 
\end{proof}

\subsubsection{Proof of Theorem \ref{thm: Bochner density Finite Rank in Linear Op}}\label{Pf: Bochner_density_linear}
\begin{proof}
Let $K \in \mathfrak{B}(\mathcal{X}; \mathcal{Y})$, and define the sequence of approximants $K_n := K \Pi_n$. For any $f \in \mathcal{X}$, we have
$$K_n(f) = K(\Pi_n f) \to K(f) \quad \text{in } \mathcal{Y} \quad \text{as } n \to \infty,$$
since $\Pi_n f \to f$ in $\mathcal{X}$ and $K$ is continuous. Thus, $K_n(f) \to K(f)$ pointwise for all $f \in \mathcal{X}$.
To estimate the Bochner norm of the error, observe that
$$
\|K - K_n\|_{L^2_\rho}^2 = \int_{\mathcal{X}} \|K(f) - K(\Pi_n f)\|_{\mathcal{Y}}^2 \dif \rho(f).
$$
Using the boundedness of $K$, we have
$$
\|K(f) - K(\Pi_n f)\|_{\mathcal{Y}} \le \|K\|_{\mathfrak{B}} \cdot \|f - \Pi_n f\|_{\mathcal{X}},
$$
and hence
$$
\|K - K_n\|_{L^2_\rho}^2 \le \|K\|_{\mathfrak{B}}^2 \int_{\mathcal{X}} \|f - \Pi_n f\|_{\mathcal{X}}^2  \dif \rho(f).
$$

Now note that for each $f \in \mathcal{X}$, we have $\Pi_n f \to f$ in norm, so $\|f - \Pi_n f\|_{\mathcal{X}}^2 \to 0$, and that
$$
\|f - \Pi_n f\|_{\mathcal{X}}^2 \le \|f\|_{\mathcal{X}}^2.
$$
By assumption, $f \mapsto \|f\|_{\mathcal{X}}^2$ is integrable with respect to $\rho$, so by the dominated convergence theorem,
$$
\lim_{n \to \infty} \int_{\mathcal{X}} \|f - \Pi_n f\|_{\mathcal{X}}^2 \dif \rho(f) = 0.
$$
It follows that
$$
\lim_{n \to \infty} \|K - K \Pi_n\|_{L^2_\rho(\mathcal{X}; \mathcal{Y})} = 0. 
$$
\end{proof}

%%%%%%%%%%%%%%%%%%%%%%%%%%%%%%%%%%%%%%%%%%%%%%%%%%%%%%%
%%   Introduction to Bochner Integral / H-Valued Random 

\subsection{Probability Measures on Separable Hilbert Spaces}\label{subsec: Prob Measures on Sep H Space}
\subsubsection{The Bochner Integral} Here, we briefly introduce an extension of Lebesgue integration, due to Bochner, to functions taking values in a Banach space. Of particular interest will be random variables taking values in a separable Hilbert space, which we introduce in the following section. Our exposition will be brief, as much of the construction is reminiscent of the standard Lebesgue integral. The interested reader can refer to \cite{hsing2015theoretical}---whose exposition we closely follow---for an application oriented introduction or to \cite{Distel_Uhl_1977} for theoretical details.

Let $f$ be a function on a measure space $(\Omega, \scr{F}, \mu)$ taking values in a Banach space $(\mathcal X, \norm{\cdot}_{\mathcal X})$. Such a function is said to be simple if it can be represented as
$$f(\omega) = \sum_{i\in [k]}\chi_{F_i}(\omega)x_i,$$
for some finite $k\in \N$, $F_i\in \scr F$ and $x_i \in \mathcal X$.
\begin{defn}[Definition 2.6.2 of \cite{hsing2015theoretical}]
    A simple function $f(\omega) = \sum_{i\in [k]}\chi_{F_i}(\omega)x_i$ with $\mu(F_i)< \infty$ for all $i\in [k]$ is Bochner-integrable, with its Bochner integral defined as
    $$\int_{\Omega}f \dif \mu = \sum_{i\in [k]}\mu(F_i)x_i.$$
\end{defn}
It is straightforward to show that the above definition is well-defined, so the supporting sets $F_i$ can be taken to be disjoint without loss. The following definition extends the notion of Bochner-integrability to non-simple functions. 
\begin{defn}[Definition 2.6.3 of \cite{hsing2015theoretical}]
    A measurable function $f:(\Omega, \scr{F}, \mu)\to (\mathcal X, \scr{B}(\mathcal X,\norm{\cdot}_{\mathcal X}))$ is Bochner integrable if there exists a sequence $\{f_n\}_{n\in \N}$ of simple Bochner-integrable functions such that
    $$\lim_{n\to \infty}\int_{\Omega}\norm{f_n - f}_{\mathcal X}\dif \mu = 0.$$
    In this case, we define
    $$\int_{\Omega} f \dif \mu = \lim_{n\to \infty}\int_{\Omega}f_n \dif \mu.$$
\end{defn}
The above definition is difficult to work with in practice, as a sequence of approximating simple functions may not be readily constructable. The following theorem provides a simple sufficient condition, provided that we take $\mathcal X$ to be a separable Hilbert space. 
\begin{thm}[Theorem 2.6.5 of \cite{hsing2015theoretical}]\label{thm: suff_boch_int}
    Suppose that $\mathcal X$ is a separable Hilbert space and $f:(\Omega, \scr F, \mu)\to (X, \scr{B}(\mathcal X, \norm{\cdot}_{\mathcal X}))$ is a measurable function. If 
    $$\int_{\Omega}\norm{f}_{\mathcal X}\dif \mu < \infty,$$
    then $f$ is Bochner-integrable. 
\end{thm}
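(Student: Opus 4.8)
The plan is to verify the definition of Bochner integrability (Definition 2.6.3) directly, by exhibiting a sequence of simple, Bochner-integrable functions $g_n$ with $\int_\Omega \norm{g_n - f}_{\mathcal X}\dif\mu \to 0$. The separability of $\mathcal X$ is exactly what makes such a construction available. First I would fix a countable dense subset $\{x_k\}_{k\in\N}\subset\mathcal X$, arranged so that $x_1 = 0$, and define $g_n(\omega)$ to be the element of $\{x_1,\dots,x_n\}$ closest to $f(\omega)$ in the $\mathcal X$-norm, breaking ties by choosing the smallest index. Each $g_n$ then takes only finitely many values, and measurability of the level sets $\{g_n = x_k\}$ follows routinely: each map $\omega\mapsto\norm{f(\omega)-x_k}_{\mathcal X}$ is measurable (a composition of the continuous $\norm{\cdot-x_k}_{\mathcal X}$ with the measurable $f$), and $\{g_n = x_k\}$ is a finite Boolean combination of sublevel sets of such maps.

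Next I would record two pointwise estimates. By density, the distance from $f(\omega)$ to $\{x_1,\dots,x_n\}$ tends to $0$ for every $\omega$, so $g_n(\omega)\to f(\omega)$ in $\mathcal X$. Moreover, since $x_1 = 0$ is always a competitor, the minimizing choice gives $\norm{g_n(\omega)-f(\omega)}_{\mathcal X}\le\norm{f(\omega)-0}_{\mathcal X}=\norm{f(\omega)}_{\mathcal X}$, and hence by the triangle inequality $\norm{g_n(\omega)}_{\mathcal X}\le 2\norm{f(\omega)}_{\mathcal X}$.

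The step I expect to be the main obstacle is confirming that each $g_n$ is genuinely a \emph{simple Bochner-integrable} function in the sense of Definition 2.6.2, i.e.\ that each nonzero value is attained on a set of finite measure, since the underlying measure $\mu$ need not be finite. This is precisely where the hypothesis $\int_\Omega\norm{f}_{\mathcal X}\dif\mu<\infty$ is used: on $\{g_n = x_k\}$ with $x_k\neq 0$ we have $\norm{f(\omega)}_{\mathcal X}\ge\tfrac12\norm{g_n(\omega)}_{\mathcal X}=\tfrac12\norm{x_k}_{\mathcal X}>0$, so $\{g_n = x_k\}\subset\{\norm{f}_{\mathcal X}\ge\tfrac12\norm{x_k}_{\mathcal X}\}$, and the latter has finite measure by Markov's inequality. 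The (possibly infinite-measure) set on which $g_n = 0$ contributes nothing to the integral and may be discarded from the simple-function representation, so each $g_n$ is legitimately simple and Bochner-integrable.

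Finally I would invoke the dominated convergence theorem for the scalar integrands: $\norm{g_n - f}_{\mathcal X}\to 0$ pointwise and is dominated by the integrable function $\norm{f}_{\mathcal X}$, whence $\int_\Omega\norm{g_n - f}_{\mathcal X}\dif\mu\to 0$. This is exactly the approximating condition in Definition 2.6.3, and therefore $f$ is Bochner-integrable, completing the proof.
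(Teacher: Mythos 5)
The paper itself gives no proof of this statement; it is imported verbatim from Theorem 2.6.5 of the cited textbook, so there is no internal argument to compare yours against. Your proof is correct and self-contained, and it follows the standard construction: nearest-point approximation from a finite initial segment of a countable dense set containing $0$ gives simple functions $g_n\to f$ pointwise with $\norm{g_n-f}_{\mathcal X}\le\norm{f}_{\mathcal X}$, and the two genuinely delicate points---measurability of the level sets $\{g_n=x_k\}$, and the requirement in Definition 2.6.2 that each nonzero value be attained on a set of \emph{finite} measure even though $\mu$ may be infinite---are both handled correctly, the latter by your Markov-inequality argument combined with the bound $\norm{g_n}_{\mathcal X}\le 2\norm{f}_{\mathcal X}$. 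The dominated convergence step then delivers exactly the approximation property demanded by Definition 2.6.3, so nothing is missing.
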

The Bochner integral shares many of the properties of the Lebesgue integral. In particular, the monotonicity property
$$\norm{\int_{\Omega}f \dif \mu}_{\mathcal X} \le \int_{\Omega}\norm{f}_{\mathcal X}\dif \mu$$
is retained, and there is a direct analogue of the dominated convergence theorem. We end with a useful theorem characterizing the interaction between bounded linear operators and the Bochner integral. 
\begin{thm}[Theorem 3.1.7 of \cite{hsing2015theoretical}]\label{thm: boch_int_commute}
    Let $\mathcal X$ and $\mathcal Y$ be Banach spaces, $f$ be a Bochner integrable function from a measure space $\Omega$ to $\mathcal X$, and $T \in \mathfrak{B}(\mathcal{X}, \mathcal Y)$ a bounded linear operator. Then $Tf \in \mathcal Y$ is Bochner integrable, and 
    $$\int_{\Omega}Tf\dif \mu = T\left(\int_{\Omega} f \dif \mu \right).$$
\end{thm}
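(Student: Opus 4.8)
The plan is to prove the identity first for simple functions, where it collapses to the linearity of $T$, and then to extend it to an arbitrary Bochner-integrable $f$ using the defining approximating sequence together with the boundedness (hence continuity) of $T$. First I would take a simple, Bochner-integrable $f = \sum_{i\in[k]}\chi_{F_i}x_i$ with $\mu(F_i)<\infty$. Since $T$ is linear, $Tf = \sum_{i\in[k]}\chi_{F_i}(Tx_i)$ is again simple with the same finite-measure supports, hence Bochner-integrable, and the definition of the Bochner integral of a simple function gives
$$
\int_\Omega Tf \dif\mu = \sum_{i\in[k]}\mu(F_i)\,Tx_i = T\left(\sum_{i\in[k]}\mu(F_i)\,x_i\right) = T\left(\int_\Omega f\dif\mu\right).
$$
This settles the base case with no work beyond linearity.

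For general $f$, the definition of Bochner integrability furnishes a sequence $\{f_n\}$ of simple Bochner-integrable functions with $\int_\Omega \norm{f_n-f}_{\mathcal X}\dif\mu \to 0$. Because $T\in\mathfrak{B}(\mathcal X;\mathcal Y)$, the pointwise bound $\norm{Tf_n - Tf}_{\mathcal Y}\le \norm{T}_{\mathfrak B}\norm{f_n-f}_{\mathcal X}$ holds, and integrating yields
$$
\int_\Omega \norm{Tf_n-Tf}_{\mathcal Y}\dif\mu \le \norm{T}_{\mathfrak B}\int_\Omega\norm{f_n-f}_{\mathcal X}\dif\mu \longrightarrow 0.
$$
As each $Tf_n$ is simple and Bochner-integrable, this display verifies---directly from the definition of Bochner integrability---that $Tf$ is itself Bochner-integrable and that $\int_\Omega Tf\dif\mu = \lim_n\int_\Omega Tf_n\dif\mu$. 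Separately, by definition of the Bochner integral we have $\int_\Omega f_n\dif\mu \to \int_\Omega f\dif\mu$ in $\mathcal X$, whence continuity of $T$ gives $T(\int_\Omega f_n\dif\mu)\to T(\int_\Omega f\dif\mu)$ in $\mathcal Y$. Applying the simple-function identity to each $f_n$ and letting $n\to\infty$ on both sides gives
$$
\int_\Omega Tf\dif\mu = \lim_{n\to\infty}\int_\Omega Tf_n\dif\mu = \lim_{n\to\infty}T\left(\int_\Omega f_n\dif\mu\right) = T\left(\int_\Omega f\dif\mu\right),
$$
which is the asserted identity.

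Two routine points support the limit argument. First, $Tf$ must be measurable for the definition of Bochner integrability to apply; this is immediate, since $T$ is continuous and $f$ is measurable, so $Tf$ is a measurable map into $\mathcal Y$. Second, the $Tf_n$ must be genuine approximants, which is clear from their explicit simple representation with finite-measure supports. I expect the only genuine subtlety to be the interchange of limit and operator at the final step: it does not follow from a single convergence but from reconciling two separately justified limits---convergence of $\int_\Omega Tf_n\dif\mu$ to $\int_\Omega Tf\dif\mu$ (from the $L^1$-type estimate) and convergence of $T(\int_\Omega f_n\dif\mu)$ to $T(\int_\Omega f\dif\mu)$ (from boundedness of $T$). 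The two displays above make both limits explicit, so no further machinery---such as a dominated-convergence argument in $\mathcal Y$---is needed.
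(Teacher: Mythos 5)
Your proof is correct. Note that the paper itself does not prove this statement---it is quoted verbatim, with citation, as Theorem 3.1.7 of the reference on functional data analysis---so there is no internal proof to compare against; your argument (verify the identity on simple functions via linearity of $T$, then pass to a general Bochner-integrable $f$ using the approximating sequence, the bound $\norm{Tf_n - Tf}_{\mathcal Y} \le \norm{T}\norm{f_n - f}_{\mathcal X}$, and continuity of $T$ to reconcile the two limits) is precisely the standard textbook proof of this result, and your attention to the measurability of $T\circ f$ closes the one point that is often glossed over.
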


\subsubsection{Random Elements of a Separable Hilbert Space}
Let $(\Omega, \scr{F}, \mathbb{P})$ be a probability space and
$$X: (\Omega, \scr{F}, \mathbb P)\to (\mathcal X, \scr{B}(\mathcal X, \norm{\cdot}_{\mathcal X}))$$
be a random element of a separable Hilbert space, $\mathcal X$. 
\begin{defn}[Definition 7.2.1 of \cite{hsing2015theoretical}]
    If $\E\left[\norm{X}_{\mathcal X}\right] < \infty$, then the expected value of $X$ is defined as the Bochner integral 
    $$m = \E[X] \coloneqq \int_{\Omega} X \dif \mathbb P.$$
\end{defn}
We note that since $\mathcal X$ is assumed to be separable, by Theorem \ref{thm: suff_boch_int}, the condition in the above definition ensures that $X$ is Bochner integrable so that the expected value is well-defined. A simple application of Theorem \ref{thm: boch_int_commute} leads to an equivalent characterization. Since
$$\E \ip{X,f}_{\mathcal X} = \ip{\E X,f}_{\mathcal X} = \ip{m,f}_{\mathcal X},$$
the expected value $m$ is the representor of the linear functional 
$$f \mapsto \E\ip{X,f}_{\mathcal X}.$$ A similar application of Theorem \ref{thm: boch_int_commute} shows that
$$\E\norm{X-m}_{\mathcal X}^2 = \E\norm{X}^2_{\mathcal X} - \norm{m}_{\mathcal X}^2$$
when $\E\norm{X}_{\mathcal X}^2$ exists and is finite.
 
\begin{defn}[Definition 7.2.3 of \cite{hsing2015theoretical}]
    If $\E\norm{X}_{\mathcal X}^2 < \infty$, then the covariance operator for $X$ is a Hilbert-Schmidt operator $\mathcal{K}\in \mathfrak{B}_{HS}(\mathcal X)$ defined by the Bochner integral
    $$\mathcal{K} = \E\left[(X-m)\otimes (X-m)\right] \coloneqq \int_{\Omega} (X-m)\otimes (X-m) \dif \mathbb P,$$
    where we recall that $f\otimes g \coloneqq f\ip{g, \cdot}_{\mathcal X}. $
\end{defn}
\begin{thm}[Theorem 7.2.5 of \cite{hsing2015theoretical}]\label{thm: prop_rand_H_Vars}
Suppose that $\E\norm{X}_{\mathcal X}^2 < \infty$. Then for $f,g \in \mathcal X$,
\begin{enumerate}
    \item $\ip{\mathcal K f,g}_{\mathcal X} = \E\left[\ip{X-m,f}_{\mathcal X}\ip{X-m,g}_{\mathcal X}\right]$
    \item $\mathcal K$ is a positive definite trace class operator with $\tr \mathcal{K} = \E\norm{X-m}_{\mathcal X}^2$, and 
    \item $\mathbb P\left(X \in \overline{\Ima (\mathcal K)}\right) = 1$.
\end{enumerate}
\end{thm}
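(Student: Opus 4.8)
The plan is to make the quadratic-form identity (i) the workhorse and deduce (ii) and (iii) from it. First I would verify that $\mathcal K$ is genuinely well defined as a Bochner integral in the separable Hilbert space $\mathfrak{B}_{HS}(\mathcal X)$ of Hilbert--Schmidt operators: since $\norm{(X-m)\otimes(X-m)}_{HS} = \norm{X-m}_{\mathcal X}^2$ and $\E\norm{X-m}_{\mathcal X}^2 = \E\norm{X}_{\mathcal X}^2 - \norm{m}_{\mathcal X}^2 < \infty$, Theorem \ref{thm: suff_boch_int} guarantees integrability. To prove (i), I would apply Theorem \ref{thm: boch_int_commute} to the bounded linear functional $T \mapsto \ip{Tf, g}_{\mathcal X}$ on $\mathfrak{B}_{HS}(\mathcal X)$ (bounded because $\abs{\ip{Tf,g}_{\mathcal X}} \le \norm{T}_{HS}\norm{f}_{\mathcal X}\norm{g}_{\mathcal X}$), which lets me pull the functional inside the integral and reduce to the pointwise identity $\ip{((X-m)\otimes(X-m))f, g}_{\mathcal X} = \ip{X-m,f}_{\mathcal X}\ip{X-m,g}_{\mathcal X}$.

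With (i) in hand, (ii) is almost immediate. Taking $g = f$ shows $\ip{\mathcal K f, f}_{\mathcal X} = \E\abs{\ip{X-m,f}_{\mathcal X}}^2 \ge 0$, and the symmetry of the right-hand side of (i) in $f,g$ gives self-adjointness, so $\mathcal K$ is positive. For the trace, I would fix any orthonormal basis $\{e_n\}$ of $\mathcal X$ and use (i) with $f = g = e_n$ together with Tonelli's theorem (legitimate since every summand is nonnegative) to write $\sum_n \ip{\mathcal K e_n, e_n}_{\mathcal X} = \sum_n \E\abs{\ip{X-m, e_n}_{\mathcal X}}^2 = \E \sum_n \abs{\ip{X-m,e_n}_{\mathcal X}}^2 = \E\norm{X-m}_{\mathcal X}^2$. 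Since this sum is finite and $\mathcal K$ is positive, $\mathcal K$ is trace class with the claimed trace.

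The delicate step is (iii). Because $\mathcal K$ is self-adjoint, $\overline{\Ima(\mathcal K)} = (\ker \mathcal K)^\perp$, so it suffices to show that the centered variable $X - m$ is almost surely orthogonal to $\ker\mathcal K$. For any fixed $h \in \ker\mathcal K$, part (i) gives $\E\abs{\ip{X-m, h}_{\mathcal X}}^2 = \ip{\mathcal K h, h}_{\mathcal X} = 0$, so $\ip{X-m,h}_{\mathcal X} = 0$ almost surely. The obstacle is that this is a \emph{null set for each $h$}, and a priori there are uncountably many directions $h$; I would resolve this by invoking separability of $\mathcal X$ (hence of the closed subspace $\ker\mathcal K$): choosing a countable orthonormal basis $\{h_k\}$ of $\ker\mathcal K$ and intersecting the countably many probability-one events $\{\ip{X-m,h_k}_{\mathcal X}=0\}$, I conclude that almost surely $\ip{X-m, h_k}_{\mathcal X} = 0$ for every $k$, i.e. $X - m \in (\ker\mathcal K)^\perp = \overline{\Ima(\mathcal K)}$ with probability one. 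I note that the statement is naturally about the centered variable $X - m$; the displayed form $\mathbb P(X \in \overline{\Ima(\mathcal K)}) = 1$ is recovered under the usual centering convention (or when $m \in \overline{\Ima(\mathcal K)}$, e.g. $m = 0$), which is where I would place the single caveat of the argument.
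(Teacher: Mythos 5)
Your proof is correct, but there is nothing in the paper to compare it against: the paper states this result as Theorem 7.2.5 of \cite{hsing2015theoretical} and cites it without proof. Your argument is a valid, self-contained reconstruction, and notably it uses exactly the toolkit the paper's appendix sets up for this purpose: Bochner integrability of $(X-m)\otimes(X-m)$ in $\mathfrak{B}_{HS}(\mathcal X)$ via Theorem \ref{thm: suff_boch_int} (using $\norm{(X-m)\otimes(X-m)}_{HS}=\norm{X-m}_{\mathcal X}^2$), and the interchange of the bounded linear functional $T\mapsto \ip{Tf,g}_{\mathcal X}$ with the Bochner integral via Theorem \ref{thm: boch_int_commute}, which yields (i); positivity, self-adjointness, and the Tonelli/Parseval computation of the trace then give (ii), and the separability argument (a countable orthonormal basis of $\ker \mathcal K$, intersecting countably many null sets) correctly handles the only delicate point in (iii). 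Your closing caveat is also well taken and is in fact a genuine clarification of the statement as displayed: what the argument proves is $\mathbb{P}\bigl(X-m\in \overline{\Ima(\mathcal K)}\bigr)=1$, and the form $\mathbb{P}\bigl(X\in \overline{\Ima(\mathcal K)}\bigr)=1$ holds precisely when $m\in\overline{\Ima(\mathcal K)}$ (e.g., $m=0$); a deterministic $X\equiv m\neq 0$ has $\mathcal K=0$ and violates the uncentered form, so the centering convention you flag is genuinely needed, both here and where the paper invokes property 3 to expand $X$ in an eigenbasis of $\mathcal K$.
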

From properties 1 and 2 above, we have that $\mathcal K$ is a compact, self-adjoint operator. It therefore admits the eigen-decomposition
$$\mathcal K = \sum_{j\in \N} \lambda_j e_j \otimes e_j,$$
where $\{e_j\}_{j\in \N}$ form an orthonormal basis for $\overline{\Ima(\mathcal K)}$. Then by property 3, the random element $X$ may be written as 
$$X = \sum_{j\in \N}\ip{X,e_j}_{\mathcal X}e_j = m + \sum_{j\in \N}\ip{X-m,e_j}_{\mathcal X}e_j$$
with probability 1. Thus, the one-dimensional projections $\ip{X,e_j}_{\mathcal X}$ determine the distribution of $X$, a fact we will make use of when constructing probability measures on $\mathcal X$.

\subsubsection{Example: The Gaussian Measure}

 One of the simplest and well studied classes of probability measures on a separable Hilbert space $\mathcal X$ are the Gaussian measures, which are defined analogously to the finite dimensional case. Let $X:(\Omega, \scr F, \mathbb P)\to (\mathcal X, \mathcal B(\mathcal X, \norm{\cdot}_{\mathcal X}))$ be a random $\mathcal X$-valued variable, and denote the distribution of $X$ by $\mu_{X}$, the push-forward probability measure on $\mathcal X$. The probability measure $\mu_X$ is Gaussian if its characteristic functional
 $$\phi_X(f) \coloneqq \E\left[e^{i\ip{X,f}_{\mathcal X}}\right] = \int_{\mathcal X}e^{i\ip{g,f}_{\mathcal X}}\dif \mu_{X}(g)$$ is of the form 
 $$\phi_X(f) = \exp\left\{ i\ip{m, f}_{\mathcal X} - \frac{1}{2}\ip{\mathcal K f,f}_{\mathcal X}\right\},$$
 where $m\in \mathcal X$ is fixed and $\mathcal K$ is a trace-class operator. See \cite{pinelis2009optimal} for more details. Recall that $\mathcal K$ is trace-class if 
 $$\tr{\abs{\mathcal K}} < \infty,\hspace{0.5cm} \text{where} \hspace{0.5cm}  
 \tr A \coloneqq \sum_{k\in \N}\ip{A \xi_n, \xi_n}_{\mathcal X}$$
 for an orthonormal basis $\{\xi_n\}_{n\in \N}$ of $\mathcal X$, and $\abs{A} \coloneqq \sqrt{A^*A}$ is the Hermitian square root. If the distribution of $X$ has Gaussian characteristic, as above, it can be shown that $m$ and $\mathcal K$ are the expected value and covariance operator for $X$, respectively. Moreover, $X$ can be represented as 
 $$X = m + \sum_{i\in \N}X_i e_i,$$
 where $\{e_i\}_{i\in \N}$ is an orthonormal eigen-basis of $\mathcal X$ associated to the compact operator $\mathcal K$, and the coefficients $X_i \sim \mathcal{N}(0, \sigma_i^2)$ are independent centered Gaussian random variables with variances $\sigma_i^2 \in \sigma(\mathcal K)$, the eigenvalues of $\mathcal K$. See e.g., \cite{rao2014characterization}.

 We note that there is an equivalent characterization in terms of the dual space, $\mathcal X^*$: namely, a Borel measure $\mu$ on a separable Banach space $\mathcal X$ is said to be a non-degenerate centered Gaussian measure if for every linear functional $f \in \mathcal X^*\setminus \{0\},$ the push-forward measure $\mu \circ f^{-1}$ is a centered Gaussian measure on $\R$. The most well known of the Gaussian measures is likely the classical Wiener measure defined on the space of continuous paths.

\subsubsection{Isserlis' Theorem}
 
\begin{thm}[Isserlis' Theorem]\label{thm:Isserlis} If $\bmat{X_1 & \cdots & X_n}^{\top}$ is a vector of random variables, then
    $$\E[X_1 \cdots X_n] = \sum_{p\in \mathcal{P}_n}\prod_{b\in p}\kappa\left((X_i)_{i\in B}\right),$$
    where $\mathcal{P}_n$ is the set of partitions of $\{1, \cdots, n\}$, the product is over the blocks of $p\in \mathcal{P}_n$, and $\kappa\left((X_i)_{i\in B}\right)$ is the joint cumulant of $(X_i)_{i\in b}.$

    If $\bmat{X_1  \cdots X_n}^{\top}$ is a zero-mean multivariate normal random vector, then
    $$\E[X_1 \cdots X_n] = \sum_{p\in \mathcal{P}^2_n}\prod_{\{i,j\} \in p} \E\left[X_iX_j\right] = \sum_{p\in \mathcal{P}^2_n}\prod_{\{i,j\}\in p}\Cov(X_i, X_j),$$
    where $\mathcal{P}^2_n$ is the set of partitions of $\{1, \cdots, n\}$ whose blocks are \emph{pairs}, and the product is over the pairs contained in $p$. 
\end{thm}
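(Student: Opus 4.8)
The plan is to prove the two displayed identities by passing through the joint cumulant generating function and the exponential formula that relates it to the moment generating function, and then to specialize to the Gaussian case, where the cumulant generating function is exactly quadratic.

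For the first (general) identity I would introduce the joint moment generating function $M(\mathbf t) = \E\left[\exp\left(\sum_{j\in[n]} t_j X_j\right)\right]$ and the joint cumulant generating function $K(\mathbf t) = \log M(\mathbf t)$, with the joint cumulant $\kappa\left((X_i)_{i\in b}\right)$ defined as the mixed partial $\partial_{t_{i_1}}\cdots\partial_{t_{i_{|b|}}} K(\mathbf t)\big|_{\mathbf t = 0}$ over the indices $i\in b$; equivalently, $\kappa((X_i)_{i\in b})$ is the coefficient of the squarefree monomial $\prod_{i\in b} t_i$ in the Taylor expansion of $K$. Since $\E[X_1\cdots X_n]$ is likewise the coefficient of $t_1\cdots t_n$ in $M$, the identity $M = e^{K}$ is the crux. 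Expanding $M = \sum_{m\ge 0} K^m/m!$ and extracting the coefficient of the squarefree monomial $t_1\cdots t_n$ forces the $n$ indices to be distributed into $m$ nonempty disjoint blocks (nonempty because $K$ has vanishing constant term), one per factor of $K$, with each block $S_i$ contributing $\kappa((X_j)_{j\in S_i})$. Summing over $m$ and over ordered tuples $(S_1,\dots,S_m)$, the factor $1/m!$ exactly cancels the overcount from ordering the blocks, leaving the sum over unordered set partitions $p\in\mathcal P_n$ of $\prod_{b\in p}\kappa((X_i)_{i\in b})$. This exponential formula is the combinatorial heart of the argument.

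For the second (Gaussian) identity I would use that, for a zero-mean multivariate normal vector with covariance $\Sigma = (\Cov(X_i,X_j))_{i,j}$, the cumulant generating function is the explicit quadratic $K(\mathbf t) = \tfrac12\,\mathbf t^{\top}\Sigma\,\mathbf t$; this follows from the Gaussian characteristic-functional form recorded earlier in the excerpt, or directly by completing the square in $M$. Because $K$ is a polynomial of degree exactly two, every mixed derivative of order $1$ or of order $\ge 3$ vanishes, so $\kappa$ of any block of size $\neq 2$ is zero, while a two-element block $\{i,j\}$ contributes $\partial_{t_i}\partial_{t_j} K\big|_0 = \Sigma_{ij} = \Cov(X_i,X_j) = \E[X_iX_j]$, the last equality by the zero-mean assumption. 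Substituting into the partition formula annihilates every partition containing a singleton or a block of size at least three, leaving precisely the pair partitions $\mathcal P_n^2$, each weighted by $\prod_{\{i,j\}\in p}\E[X_iX_j]$; in particular the sum is empty, hence zero, when $n$ is odd.

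The main obstacle is making the exponential formula rigorous: the step $M = e^{K}$ and the term-by-term coefficient extraction require either that the moment generating function exist in a neighborhood of the origin or a purely formal treatment. Since the hypothesis of the first part guarantees only finitely many moments, I would adopt the formal-power-series (equivalently, finite-order characteristic-function) version: expand $\phi(\mathbf t) = \E[\exp(i\sum_j t_j X_j)]$ to total degree $n$, whose coefficients are exactly the mixed moments assumed to exist, define the cumulants as the coefficients of $\log\phi$ to the same order, and carry out the identical combinatorial extraction on these truncated series. The Gaussian step then needs no such care, since there $K$ is entire and quadratic, so all series manipulations converge and terminate.
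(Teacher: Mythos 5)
The paper itself gives no proof of this theorem: it is recorded in the appendix as a classical result (Isserlis/Wick) and is invoked only through its Gaussian case and the accompanying corollary to evaluate the inner products \eqref{eq:M-multilinear-ip-isseralis}, so there is no argument in the paper to compare yours against. Your proposal is a correct, standard proof. The general identity is precisely the moment--cumulant formula, and your derivation via $M=e^{K}$, extraction of the coefficient of the squarefree monomial $t_1\cdots t_n$, and cancellation of the $1/m!$ against the $m!$ orderings of the blocks is the usual exponential-formula argument; the Gaussian specialization correctly uses that $K(\mathbf{t})=\tfrac{1}{2}\mathbf{t}^{\top}\Sigma\,\mathbf{t}$ is exactly quadratic, so every block cumulant of size $\neq 2$ vanishes, only pair partitions survive, and each pair contributes $\Cov(X_i,X_j)=\E[X_iX_j]$ by the zero-mean hypothesis. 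Your handling of the integrability issue is also the right one: since the moment generating function need not exist, working with the characteristic function truncated at total degree $n$ (equivalently, treating the identity as one of formal power series) is exactly what legitimizes the coefficient extraction under the stated hypotheses, while the Gaussian case needs no such care because its moment generating function is entire. One small step worth making explicit in a write-up: when extracting the coefficient of $t_1\cdots t_n$ from $K^m$, note that no factor can contribute a monomial with a repeated variable, since the total exponent of each $t_i$ in the product must equal one; this is what forces the supports of the chosen monomials to be disjoint nonempty sets forming a partition of $\{1,\dots,n\}$.
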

\begin{cor}
    If $X = \bmat{X_1 & \cdots & X_n }^{\top}\sim \mathcal{N}(0, \boldsymbol{\Sigma})$ and $n$ is odd, then
    $$\E[X_1\cdots X_n] = 0$$
    since there exists no partition of $\{1, \cdots n\}$ into pairs. 
\end{cor}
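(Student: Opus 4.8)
The plan is to read this off directly from the centered Gaussian case of Isserlis' Theorem \ref{thm:Isserlis}, which expresses the moment as a sum over pair partitions,
$$\E[X_1 \cdots X_n] = \sum_{p \in \mathcal{P}^2_n} \prod_{\{i,j\} \in p} \Cov(X_i, X_j).$$
Since $X \sim \mathcal{N}(0, \boldsymbol{\Sigma})$ is genuinely zero-mean, this pair-partition form (rather than the general cumulant form) is the applicable one, and the entire content of the corollary collapses to the combinatorial observation that $\mathcal{P}^2_n = \emptyset$ whenever $n$ is odd.

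First I would verify this emptiness. A partition of $\{1, \dots, n\}$ into pairs (equivalently, a perfect matching) consists of disjoint two-element blocks whose union is the full index set; if such a partition has $k$ blocks, it accounts for exactly $2k$ indices. Hence a pair partition can exist only if $n = 2k$ for some $k \in \N$, i.e., only if $n$ is even. For odd $n$ no such partition exists, so $\mathcal{P}^2_n = \emptyset$ and the sum in the Wick formula is empty. By the usual convention that an empty sum equals $0$, we conclude $\E[X_1 \cdots X_n] = 0$.

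There is no substantive obstacle here; the only point requiring care is the empty-sum convention, which is exactly the mechanism by which Isserlis' formula encodes the vanishing of odd moments. As an independent sanity check that avoids invoking the full strength of the theorem, one could instead argue by reflection symmetry: because $X \sim \mathcal{N}(0, \boldsymbol{\Sigma})$ implies $-X \sim \mathcal{N}(0, \boldsymbol{\Sigma})$, the moment is invariant under $X \mapsto -X$, giving
$$\E[X_1 \cdots X_n] = \E[(-X_1)\cdots(-X_n)] = (-1)^n\,\E[X_1 \cdots X_n],$$
and for odd $n$ the factor $(-1)^n = -1$ forces $\E[X_1 \cdots X_n] = 0$. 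Either route suffices, but since the statement is posed as a corollary to Isserlis, I would present the empty-pair-partition argument as the primary proof.
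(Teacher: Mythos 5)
Your proof is correct and matches the paper's own justification exactly: the corollary follows from Isserlis' theorem because $\mathcal{P}^2_n$ is empty when $n$ is odd, so the Wick sum vanishes. The reflection-symmetry argument you add ($X \mapsto -X$) is a nice independent check, but the primary argument you present is precisely the one the paper intends.
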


\subsection{Sampling considerations}
This section collects results concerning sampling from the optimal distribution.

\subsubsection{Optimal Sampling for Multilinear Operators $M_k$ in \eqref{eq:multilinear-operator}}
We discuss why we do not form subspaces from the multilinear operators $M_k$. The challenge with this approach is that sampling from the optimal measure $\mu$ is difficult, even when $\rho$ is a product measure. The essential bottleneck is the fact that for $\bs{n}, \bs{m} \in \N_F^\infty$, $M_{k_{\bs{n}}}$ and $M_{k_{\bs{m}}}$ are rarely orthogonal. A simple remedy would be to orthogonalize the operators, requiring the computation of $L^2_\rho(\mathcal{X}; \mathcal{Y})$ inner products. For $\bs{n},\bs{m} \in \N_F^\infty$ with $r = \|\bs{n}_{\sim 0}\|_0 $ and $s = \|\bs{m}_{\sim 0}\|_0 $, a computation yields:
\begin{align}\label{eq:M-multilinear-ip}
    \ip{M_{k_{\bs m }}, M_{k_{\bs n}}}_{L^2_{\rho}} & = \E_{f\sim \rho} \ip{M_{k_{\bs m}}f, M_{k_{\bs n}}f}_{\mathcal{Y}} 
    = \delta_{m_{0}, n_{0}} \E\left[\prod_{i=1}^r \prod_{j=1}^s \hat f_{m_i} \hat f_{n_j}\right],
\end{align}
revealing that if $r + s \geq 2$, we require higher order moments of the Fourier coefficients. In the relatively rare event that any entry in $(m_1, \ldots, m_s, n_1, \ldots, n_r)$ is duplicated at most once in that vector, then the above expectation could be computed explicitly as a function of first and second moments of the Fourier coefficients.

Stronger assumptions could be made to circumvent the need for higher-order moments. For example, if we further assume that $\rho$ is a centered Gaussian measure, i.e., $\hat f_k \coloneqq \ip{f, \xi_k}_{\mathcal X} \sim \mathcal N(0, \sigma^2_k)$, then by Isserlis' theorem, 
\begin{align}\label{eq:M-multilinear-ip-isseralis}
    \ip{M^r_{k_{\bs m}}, M^s_{k_{ \bs n}}}_{L^2_\rho} & = \delta_{m_{0}, n_{0}}\sum_{p\in \mathcal P^2}\prod_{\{k,l\} \in p} \sigma^2_k \delta_{k,l}
\end{align}
where $\mathcal P^2$ is the set of \textit{perfect pair partitions} of $\{m_1, \cdots, m_r, n_1, \cdots, n_s\}$\footnote{Here, we take pair partitions of the \emph{symbols} $m_i,n_j$ so the set is of cardinality $r+s$, even if the \emph{values} of $m_i,n_j$ are duplicated.}, providing an explicit formula for the inner product (see \cref{thm:Isserlis} for details). An immediate corollary of this is that if $r+s$ is odd, then $\ip{M_{k_{\bs{n}}}, M_{k_{\bs{m}}}}_{L^2_\rho} = 0$ since there are no perfect pair partitions of a set of odd cardinality. 

Despite the results on computing inner products, neither the formula \eqref{eq:M-multilinear-ip}, which relies on high-order moments of the Fourier coefficients, nor \eqref{eq:M-multilinear-ip-isseralis}, which imposes a stringent Gaussianity assumption, is particularly practical for numerical computations. Both approaches require computing these inner products and subsequently orthogonalizing the operators.

\subsubsection{Approximate Optimal Sampling for Univariate Least Squares Approximations via Orthogonal Polynomials}\label{sec:approx_induced_samp_LS}
Let $\Lambda \subset \N_0$ have finite size, $|\Lambda | = N$, and let $f \in L^2_\mu(\R)$. We seek to use empirical weighted least squares to approximate a continuum least squares problem. The feasible set of approximants is,
\begin{align*}
  V_\Lambda &= \mathrm{span} \{p_n\}_{n \in \Lambda}, & \dim V_\Lambda &= N,
\end{align*}
where $\{p_n\}_{n \in \N_0}$ are the $L^2_\mu$-orthonormal polynomials, see, e.g., \cite{simon2015operator}. The continuum problem seeks to solve,
\begin{align*}
  v_{\Lambda}^\ast \coloneqq \argmin_{v \in V_\Lambda} \| f - v\|_{L^2_\mu}.
\end{align*}
To approximate this solution, we solve the empirical, weighted problem,
\begin{align*}
  v_\Lambda \coloneqq \argmin_{v \in V_\Lambda} \sum_{m \in [M]} \frac{w_m}{M} \left( f(X_j) - v(X_j) \right)^2,
\end{align*}
where $M \in \N$ is the finite empirical sample size, and $\{X_m\}_{m \in [M]}$ are iid samples from a probability measure $\rho$. To maintain $M$-asymptotic consistency of the objective function relative to the $L^2_\mu$ norm, we impose,
\begin{align*}
  X_m \sim \rho \hskip 10pt \Longrightarrow \hskip 10pt w_m = \frac{\dif{\mu}}{\dif{\rho}}(X_m).
\end{align*}
It's well-known, cf. \cref{eq:SampleComplexity} that choosing $\rho$ as,
\begin{align*}
  \dif{\rho}(x) = \sup_{v \in V_\Lambda, \, \|v\|_{L^2_\mu} = 1} |v(x)|^2 \dif{\mu}(x) = \frac{1}{N} \sum_{n \in \Lambda} p_n^2(x) \dif{\mu}(x),
\end{align*}
is optimal in the sense that a minimal number of samples are required to obtain high-probability subspace embedding theoretical results. 

The next major task we describe is \textit{how} one can feasibly sample from $\rho$ for general $\mu$, $V_\Lambda$. The first step is to recognize that this problem of sampling from a general $\rho$ can be written in terms of specific building blocks. We write $\rho$ as,
\begin{align*}
  \dif{\rho}(x) &= \frac{1}{N} \sum_{n \in \Lambda} \dif{\rho}_n(x), & \dif{\rho}_n(x) &\coloneqq p_n^2(x) \dif{\mu}(x).
\end{align*}
Since $\int \dif{\rho}_n(x) = \int p_n^2(x) \dif{\mu}(x) = 1$, then $\rho_n$ is a probability measure for every $n$. Therefore, $\rho$ is a mixture of the $N$ measures $\rho_n$ and so the ability to sample from every $\rho_n$ implies the ability to sample from $\rho$.

Fix $n$ and consider the problem of sampling from $\rho_n$. To perform inverse transform sampling (one of the standard ways to sample from arbitrary univariate measures), we would need to computationally construct the function
\begin{align*}
  F_n(x) \coloneqq \int_{-\infty}^x p_n^2(s) \dif{\mu}(s) \in [0,1],
\end{align*}
and subsequently perform numerical rootfinding to solve $F_n(x) = U$, where $U \sim \mathrm{Uniform}([0,1])$. Integrating a polynomial integrand is not difficult, but $\mu$ can be arbitrary, which makes numerically approximating $F_n$ challenging \cite{narayan_computation_2018}.

To circumvent this challenge, we can approximate $\mu$ by a finite measure and perform discrete sampling. Here is one strategy to accomplish this: we can replace $\mu$ with the empirical (discrete) measure corresponding to a $Q$-point $\mu$-Gaussian quadrature rule. I.e., for a fixed $Q \in \N$, let $\{x_q\}_{q \in [Q]}$ and $\{w_q\}_{q \in [Q]}$ be the nodes and weights, respectively, for a $Q$-point Gaussian quadrature rule,
\begin{align*}
  \{x_q\}_{q \in [Q]} &= p_Q^{-1}(0), & w_q &= \frac{1}{\sum_{j=0}^{q-1} p_j^2(x_q)}, \hskip 5pt q \in [Q].
\end{align*}
Then we will make the approximation,
\begin{align*}
  \dif{\mu} \approx \dif{\mu_Q} \coloneqq \sum_{q \in [Q]} w_q \delta_{x_q},
\end{align*}
where $\delta_x$ is the Dirac mass centered at $x$. The generic reason to use this measure (as opposed to say any other empirical measure) is that the first $2Q-1$ moments of $\mu_Q$ and $\mu$ coincide. If $\mu$ is sufficiently regular to have, e.g., a moment generating function, then convergence of moment sequences can be translated into convergence in distribution of $\mu_Q$ to $\mu$. In particular, this moment-based convergence analysis is optimal in the sense that it matches as many moments as possible ($2Q-1$) for a fixed number of masses ($Q$). As a general rule of thumb, if $Q \gg n_{\max} \coloneqq \max_{n \in \Lambda} n$, then one can expect that $\mu_Q$ is an excellent approximation to $\mu$ for the purposes of approximating from the subspace $V_\Lambda$.

Once $\mu$ is approximated by $\mu_Q$, we then define an alternative, approximate optimal sampling measure:
\begin{align*}
  \dif{\widehat{\rho}}(x) \coloneqq \frac{1}{N} \sum_{n \in \Lambda} p_n^2(x) \dif{\mu}_Q(x)
\end{align*}
(The hat $\widehat{\cdot}$ on $\rho$ and in the sequel is meant to mimic notation for an empirical statistical estimator.) So long as $Q \geq n_\max$, then $\{p_n\}_{n \in \Lambda}$ is an orthonormal basis in $L^2_{\mu_Q}$, so that again we can write this as a mixture of measures,
\begin{align*}
  \dif{\widehat{\rho}}(x) &= \frac{1}{N} \sum_{n \in \Lambda} \dif{\widehat{\rho}}_n(x), & \dif{\widehat{\rho}}_n(x) &\coloneqq p_n^2(x) \dif{\mu}_Q(x).
\end{align*}
The sampling from $\widehat{\rho}_n$ is relatively straightforward: first we form a $Q \times N$ Vandermonde-like matrix,
\begin{align*}
  \left(\bs{V}\right)_{i,j} &= p_{n_j}(x_i), & \Lambda &= \{n_1, n_2, \ldots, n_N\}, & (i,j) &\in [Q] \times [N].
\end{align*}
We next form a normalized elementwise-squared matrix,
\begin{align*}
  (\bs{W})_{i,j} = \frac{\left(\bs{V} \odot \bs{V}\right)_{i,j}}{\sum_{j=1}^N \left(\bs{V} \odot \bs{V}\right)_{q,j}}
\end{align*}

We subsequently form a $Q \times N$ matrix formed from column-wise cumulative sums of $\bs{W}$:
\begin{align*}
  (\bs{F})_{i,j} = \sum_{k \in [i]} (\bs{W})_{k,j}.
\end{align*}
By construction, $(\bs{F})_{i,j}$ is the cumulative distribution function of $\widehat{\rho}_{n_j}$ evaluated at $x_i$. Therefore, given $j \in [N]$, the following procedure generates a random sample from $\widehat{\rho}_{n_j}$:
\begin{itemize}
  \item Generate $U \sim \mathrm{Uniform}([0,1])$
  \item Compute the smallest value of $i$ such that $U \leq (\bs{F})_{i,j}$.
  \item Return $x_i$.
\end{itemize}

\subsubsection{Optimal Sampling for Discrete Measures}\label{app:sampling-discrete-measures}

In previous sections, we described explicit procedures for generating independent samples $f^i\in \mathcal X$ from an optimal sampling measure $\mu$, as in \eqref{eq:opt weight and measure}. Doing so required both full knowledge of the approximation measure $\rho$ on $\mathcal X$ and the ability to define an $L^2_\rho$-orthonormal basis for a desirable approximation space $V.$ We have seen that if $\rho$ is a known product measure and $V$ is of the form \eqref{eq:V-linear} or \eqref{def: nonlinear approx space}, then this can be readily accomplished, provided that one can efficiently sample from the component measures $\rho_j$ and their induced counterparts $\tilde{\rho_j}$  defined in \eqref{eq:lin-ind-density} and \eqref{def: ind prod measure nonlin}. In general however, if $\rho$ is not a product measure, defining a class of orthonormal operators in $L^2_{\rho}$ is onerous, and even if some such orthonormal set were given, sampling from the resulting measures may be computationally challenging \cite{adcock2022towards}. These issues cannot be ignored, since in practical applications, one may have no control over $\emph{how}$ samples are generated. A simple remedy to these issues was provided in \cite{migliorati_multivariate_2021,adcock2020near} for the case of function approximation and \cite{adcock2022towards} for Hilbert space valued functions. Briefly, the idea is to replace the (typically continuous) measure $\rho$ by a discrete measure $\upsilon$ supported on a finite set, on which both sampling and constructing a orthonormal basis is simple.  

\paragraph{The Construction}
We assume, as we did implicitly in the more specific cases of \eqref{eq:V-linear} and \eqref{def: nonlinear approx space}, that for some index set $\Lambda,$
\begin{align}\label{def: emperical approx space} V  = \text{span}_{\bs \alpha \in \Lambda}\{\psi_{\bs \alpha}g_{\bs \alpha}(\cdot)\} \subset \mathcal{Y}\otimes G_{\Lambda},\end{align}
where $g_{\bs \alpha} \in L^2_\rho(\mathcal X;\R)$ and $G_{\Lambda} = \text{span}_{\bs \alpha \in \Lambda}g_{\bs \alpha}.$ Note that we do not assume that $\{g_{\bs \alpha} \}_{\bs \alpha \in \Lambda}$ is orthonormal. Now, let $X = \{x_i\}_{i=1}^S \subset \mathcal X,$ and define the discrete uniform measure $\upsilon$ on $\mathcal X$ as in \eqref{def: discrete measure tau}.
%\begin{align}\label{def: discrete measure tau}\upsilon = \frac{1}{S}\sum_{k\in [S]}\delta_{x_i}\end{align}
It is immediate that if $\{b_{\bs \alpha}\}_{\bs \alpha \in \Lambda}$ is an $L^2_\upsilon(\mathcal X;\R)$ orthonormal basis for $G_{\Lambda},$ then 
\begin{align*}
\Phi^{\upsilon}_{\bs \alpha} & \coloneqq \psi_{\bs \alpha}b_{\bs \alpha}(\cdot) & \text{span}_{\bs \alpha \in \Lambda}\{\Phi_{\bs \alpha}^{\upsilon}\} &= V & \ip{\Phi_{\bs \alpha}^\upsilon, \Phi_{\bs \beta}^{\upsilon}}_{L^2_{\upsilon}(\mathcal X; \mathcal Y)} = \delta_{\bs \alpha,\bs \beta}.
\end{align*}
All of the theory in \cref{sec: Main Results: LS-OP} holds for the discrete measure $\upsilon,$ so by \cref{cor:OptStab and Sample Complexity Estiamte}, if $\{ f^i\}_{i=1}^M\sim \dif \mu^M$ with $M \ge c_{\delta}  N \log(2N/\epsilon),$ then with probability at least $1-\epsilon$,
$$(1-\delta)\norm{\cdot}_{L^2_\upsilon(\mathcal X; \mathcal Y)}^2 \le \norm{\cdot}^2_{L^2_{\upsilon,M}} \le (1+\delta)\norm{\cdot}^2_{L^2_\upsilon(\mathcal X; \mathcal Y)} \hspace{0.5cm}(\forall A \in V)$$
so that the approximation theorems in \cref{ssec: Accuracy} may be applied. 
 Explicitly, the optimal measure is defined by $\dif \mu(f) = w(f)^{-1} \dif \upsilon(f)$ with the weights 
 \begin{align}\label{def: opt weights discrete}w( f) = \frac{N}{\sum_{\bs \alpha \in \Lambda}\norm{\psi_{\bs \alpha}b_{\bs \alpha}(f)}_{\mathcal Y}^2} = \frac{N}{\sum_{\bs \alpha \in \Lambda}\abs{b_{\bs \alpha}(f)}^2}.\end{align}
 Obviously, we would like to obtain an error estimate with respect to the original measure $\rho.$ This is possible whenever the $L^2_\upsilon$ norm is comparable to the $L^2_\rho$ norm over $V.$ One way to achieve this is to construct $X = \{x_i\}_{i=1}^{S}$ as a random Monte Carlo grid -- i.e., to sample $x_i\sim \rho$ and to take the weights defining the discrete norm $\norm{\cdot}_{L^2_{\rho,S}}$ to be identically 1. 
 Indeed, since 
 $$\norm{A}_{L^2_\upsilon}^2 =  \frac{1}{S}\sum_{i\in [S]}\norm{Ax_i}_{\mathcal Y}^2 = \norm{A}_{L^2_{\rho,S}}^2$$
 by \cref{thm:Samp Complexity}, if $S \ge c_{\delta} \kappa_{w\equiv 1}^{\infty}(V)\log(2N/\epsilon),$ then with probability at least $1-\epsilon$,
$$(1-\delta)^2\norm{\cdot}_{L^2_\rho(\mathcal X;\mathcal Y)}^2 \le \norm{\cdot}_{L^2_\upsilon}^2 \le (1+\delta)\norm{\cdot}_{L^2_\rho}^2.$$
Of course, the constant $\kappa_{w\equiv 1}^{\infty}$ may be very large (or even infinite) depending on the choice of $V$, so that a large grid $X\subset \mathcal X$ is required. However, once such a grid is obtained, the sample complexity of the least squares problem remains near-optimal, requiring only $M \gtrsim N\log(2N/\epsilon)$ samples from $X. $

\paragraph{The Induced Optimal Measure}%\label{sss: ind opt mesure discrete}
In order to sample from $\mu$, we must first construct a $L^2_\upsilon(\mathcal X; \R)$ orthonormal basis of $G_{\Lambda}.$ Since $\upsilon$ is a discrete measure, this can be accomplished by a simple $\text{QR}$ decomposition. Indeed, let
$\{\bs \alpha^1, \cdots, \bs \alpha^N\} = \Lambda$
be any enumeration of the index set. Then define
$$\bs V = \left(g_{\bs\alpha^j}(x_i)/\sqrt{S}\right)_{i\in [S], j\in [N]} \in \R^{S\times N},$$
and let $\bs V = \bs Q \bs R$ be a QR factorization of $\bs V$: i.e., $\bs Q \in \R^{S\times N}$ and $\bs R\in \R^{S\times S}$, with $\bs Q_{j}$ orthonormal columns and $\bs R$ upper-triangular. It follows that
$$b_{ \bs \alpha^i}(f) = \sum_{j=1}^i\left(\bs R^{-\top}\right)_{ij}g_{\bs \alpha^j}(f) \hspace{0.5cm}(i\in [N])$$
forms an $L^2_\upsilon(\mathcal X; \R)$-orthonormal basis for $G_{\Lambda}$, and for all $x\in X,$ we have that $b_{ \bs \alpha^j}(x_i) = \sqrt{S} Q_{ij}.$
Thus, by \eqref{def: opt weights discrete} and \eqref{def: discrete measure tau}, the optimal induced measure is
\begin{align}\label{def: ind disc opt meas}\dif \mu (f) = \sum_{i\in [S]}\left(\frac{1}{N}\sum_{j\in [N]}\abs{Q_{ij}}^2\right)\dif \delta_{x_i}(f).\end{align}
Hence, $\mu$ is a discrete probability measure on $X$ with mass function
$$\Pr\{f = x_i\} = \frac{1}{N}\sum_{j\in [M]} \abs{Q_{ij}}^2,$$
so optimally sampling is trivial. 

\subsection{Computational Complexity}\label{appendix: comp_complexity}

In this section, we give computational complexity estimates for the solution of the least squares problem \eqref{eq:discrete-ls} under the assumption that the approximation space $V$ is of a simple tensor product form. In particular, we restrict the domain and co-domain of the target operator $K\in L^2_\rho(\mathcal X; \mathcal Y)$ to the finite dimensional spaces
$$\mathcal X_h \coloneqq \text{span}_{i\in[d_{\mathrm{in}}]}\{\xi_i\} \hspace{0.25cm}\text{and}\hspace{0.25cm}\mathcal Y_h\coloneqq \text{span}_{i\in [d_{\mathrm{out}}]} \{\psi_i\},$$ and assume that
\begin{align*}
    V = \text{span}_{\lambda \in \Lambda}\{\Phi_{\lambda}\} &\subset \mathcal Y_h \otimes L^2_{\rho_h}(\mathcal X_h; \R)   & \Phi_{\lambda}(\cdot) &= \psi_{\lambda_0}k_{\lambda_{1:}}(\cdot) &  \abs{\Lambda} &= N, 
\end{align*}
where each of the above spans are over an orthonormal set and $\rho_h = \otimes_{j\in [d_{\mathrm{in}}]}\rho_j$ is the restriction of $\rho$ to $\mathcal X_h$. We note that both the finite rank projection operators \eqref{eq:Phin-linear} and the orthogonal polynomial operators \eqref{def: multivar PolyOP} fit into this framework with $$k_{\lambda_1:} = \rho_{j,2}^{-\frac{1}{2}}\xi_{\lambda_1} \hspace{0.25cm}\text{and}\hspace{0.25cm} k_{\lambda_{1:}} = \prod_{j\in \N} p^j_{\lambda_j}(\xi_j^*f)$$
respectively. 

\subsubsection*{Uniform Tensor Product Structure}
We begin by assuming that $V$ takes a uniform tensor product structure, induced by an index set of the form 
$$\Lambda = [d_{\mathrm{out}}]\times \Lambda_1,$$ for which we take the linear ordering
\begin{align}\label{Ord: Linear Ordering}\Lambda = \{\lambda_1^1,\ldots, \lambda_n^1,\lambda_1^2, \ldots, \lambda_n^2, \ldots, \lambda_1^{d_{\mathrm{out}}}, \ldots, \lambda^{d_{\mathrm{out}}}_n\},\end{align}
with $n\coloneqq \abs{\Lambda_1}$.  We now write $\Phi_{\lambda^i_j} = \psi_i k_{j}$. Under this assumption, the Gram matrix in \eqref{eq:normal-equations} admits a simple block diagonal structure. Indeed, we have
\begin{align*}G_{\lambda_j^i, \lambda^r_s} &= \ip{\Phi_{\lambda_j^i,}, \Phi_{\lambda^r_s}}_{L^2_{\rho_h,M}} = \frac{1}{M}\sum_{m\in [M]}w(f^m)k_j(f^m)k_s(f^m)\ip{\psi_i,\psi_r}_{\mathcal Y_h} \\ &= \frac{1}{M}\sum_{m\in [M]}w(f^m)k_j(f^m)k_s(f^m)\delta_{i,r}.\end{align*} Note that the above is independent of the values of $i$ and $r$. It follows that 
$\bs G  = \text{diag}(\bs B)$ with $\bs B\in \text{Sym}_n(\R)$ defined by 
\begin{align*}  B_{ij}& = \frac{1}{M}\sum_{m\in [M]}w(f^m)k_i(f^m)k_j(f^m). \end{align*} The vector equation $\bs G \bs c = \bs g$ in \eqref{eq:normal-equations} is therefore equivalent to the matrix equation
$$\bs B \bs C = \bs D,$$
where $\bs C,\bs D \in \R^{n\times d_{\mathrm{out}}}$ are defined by
$$C_{ij} = c^j_i \hspace{0.25cm}\text{and}\hspace{0.25cm} D_{ij} = g^j_i.$$ In the above, we again make use of the ordering \eqref{Ord: Linear Ordering}: e.g., the expansion coefficients $\bs c$ are ordered via
$$\bs c = \bmat{c^1_1 & \cdots & c^1_n & \cdots & c^{d_{\mathrm{out}}}_1 & \ldots& c_n^{d_{\mathrm{out}}}} = \bmat{\bs c^1 &\cdots & \bs c^{d_0}}^\top.$$ 
With this, the following least squares problems are equivalent: 
\begin{align}\label{eq: matrix LS problem}
    \argmin_{\bs c \in \R^N} \norm{\bs G \bs c - \bs g}_2 \cong \argmin_{\bs C \in \R^{n\times d_{\mathrm{out}}}}\norm{\bs B\bs C - \bs D}_F,
\end{align}
where $\norm{\cdot}_F$ is the Frobenius norm on $\R^{n\times d_{\mathrm{out}}}$. Explicitly, 
\begin{align*}
    \norm{\bs G\bs c - \bs g}_2^2 & = \norm{\bmat{\bs B\bs c^1 - \bs g^1 &\cdots & \bs B \bs c^{d_{\mathrm{out}}} - \bs g^{d_{\mathrm{out}}}}^\top}_2^2\\
    & = \sum_{j\in [d_{\mathrm{out}}]}\norm{\bs B \bs c^j - \bs g^j}^2_2 \\
    & = \sum_{j\in [d_{\mathrm{out}}]}\sum_{i\in [n]}\left( (\bs B \bs c^j)_i - g^j_i\right)^2 \\
    & = \sum_{j\in [d_{\mathrm{out}}]} \sum_{i\in [n]} \left((BC)_{ij} - D_{ij}\right)^2\\
    & = \norm{\bs B \bs C - \bs D}^2_F.
\end{align*} 
The solution to the matrix form of the least squares problem \eqref{eq: matrix LS problem} is similar to that of its vector valued counterpart, as the following lemma makes clear. 
\begin{lem}{(Matrix Least Squares)} Let $ \bs B \in \R^{m\times n}, \bs C \in \R^{n\times l}$ and $\bs D \in \R^{m\times l}.$ Then
$$\argmin_{\bs C \in \R^{n\times l}} \norm{\bs B \bs C - \bs D}_F = \bs B^{\dagger} \bs D,$$
where $\bs B^{\dagger} \coloneqq \left(\bs B^T\bs B\right)^{-1}\bs B^T$ is the pseudo-inverse of $\bs B.$
\end{lem}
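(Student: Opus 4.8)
The plan is to exploit the fact that the squared Frobenius norm separates into a sum over the columns of the residual. Writing $\bs c_j$ and $\bs d_j$ for the $j$-th columns of $\bs C$ and $\bs D$, I would first record the identity
$$\norm{\bs B \bs C - \bs D}_F^2 = \sum_{j\in [l]}\norm{\bs B\bs c_j - \bs d_j}_2^2,$$
which holds because the $(i,j)$ entry of $\bs B\bs C - \bs D$ depends only on the $j$-th column $\bs c_j$. Consequently, minimizing the Frobenius norm over $\bs C$ decouples into $l$ independent ordinary (vector) least-squares problems, one per column, and the columns may be optimized separately.

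Next I would solve each column problem by the standard normal-equations argument. Since $\bs c_j \mapsto \norm{\bs B\bs c_j - \bs d_j}_2^2$ is a convex quadratic, its unique minimizer is characterized by vanishing gradient, i.e.\ $\bs B^T(\bs B\bs c_j - \bs d_j) = \bs 0$, equivalently $\bs B^T\bs B\,\bs c_j = \bs B^T\bs d_j$. Under the (implicit) assumption that $\bs B$ has full column rank, $\bs B^T\bs B$ is invertible, giving $\bs c_j = (\bs B^T\bs B)^{-1}\bs B^T\bs d_j = \bs B^{\dagger}\bs d_j$. Reassembling the columns yields $\bs C = \bs B^{\dagger}\bs D$, as claimed.

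Alternatively, and perhaps more cleanly, I would avoid the columnwise reduction and work directly at the matrix level: expand the objective as
$$\norm{\bs B\bs C - \bs D}_F^2 = \tr\!\big((\bs B\bs C - \bs D)^T(\bs B\bs C - \bs D)\big),$$
differentiate with respect to the matrix $\bs C$, and set the derivative to zero to obtain the matrix normal equation $\bs B^T\bs B\,\bs C = \bs B^T\bs D$. The same invertibility of $\bs B^T\bs B$ then produces $\bs C = \bs B^{\dagger}\bs D$. I would close by invoking convexity of the quadratic objective to confirm that this stationary point is the global minimizer rather than merely a critical point.

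The computation here is entirely routine, so there is no substantive obstacle; the only point requiring care is the hypothesis that $\bs B^T\bs B$ be invertible (full column rank of $\bs B$), which is exactly what is needed for the stated form $\bs B^{\dagger} = (\bs B^T\bs B)^{-1}\bs B^T$ of the pseudo-inverse to be well defined. In the application to \eqref{eq: matrix LS problem} this is guaranteed whenever the discrete semi-norm is a genuine norm on the scalar block, i.e.\ precisely the stability condition established via Lemma \ref{lem:comp_norms}.
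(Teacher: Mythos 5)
Your proposal is correct, and your primary route differs from the paper's. The paper proves the lemma entirely at the matrix level: it expands $h(\bs C + \epsilon \bs A) = \frac{1}{2}\norm{\bs B\bs C - \bs D + \epsilon \bs B \bs A}_F^2$, reads off the Fr\'echet derivative $\mathrm{D}h(\bs C)(\cdot) = \ip{\bs B^T(\bs B \bs C - \bs D), \cdot}_F$, and sets it to zero — this is essentially your ``alternative'' trace/matrix-calculus argument, not your main one. Your main argument instead decouples $\norm{\bs B\bs C - \bs D}_F^2 = \sum_{j\in[l]}\norm{\bs B \bs c_j - \bs d_j}_2^2$ and solves $l$ independent vector least-squares problems; this is more elementary (no matrix differentiation needed), and it dovetails nicely with how the paper itself uses the lemma, since the surrounding text already performs exactly this columnwise identification to pass between $\bs G \bs c = \bs g$ and $\bs B \bs C = \bs D$. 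In two respects you are actually more careful than the paper: the paper stops at the first-order \emph{necessary} condition without invoking convexity to conclude that the stationary point is the global (and unique) minimizer, whereas you close that gap explicitly; and you flag the full-column-rank hypothesis on $\bs B$ that is needed for $(\bs B^T\bs B)^{-1}$ — and hence the stated formula for $\bs B^\dagger$ — to make sense, a hypothesis the paper leaves implicit in the definition of the pseudo-inverse and in its later assumption that $\bs B$ is nonsingular. Your final remark tying that hypothesis to the stability condition of Lemma \ref{lem:comp_norms} is also consistent with how the paper justifies invertibility in the application.
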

\begin{proof}
Consider the objective function $h:\R^{n\times l }\to \R$ defined by 
$$h(\bs C) = \frac{1}{2}\norm{\bs B \bs C - \bs D }^2_F.$$
For all $\epsilon \in \R_{++}$ and $\bs A \in \R^{n\times l},$ we have
\begin{align*}
    f(\bs C + \bs A) &= \frac{1}{2}\norm{\bs{BC} - \bs{D} + \epsilon\bs{BA}}^2_F\\ & = \frac{1}{2}\norm{\bs B \bs C - \bs D}_F^2 + \ip{\bs B\bs C - \bs D, \epsilon \bs B\bs A}_F + \epsilon^2\norm{\bs B\bs A}_F^2,
\end{align*}
where the inner product $\ip{\bs A,\bs B}_F = \tr(\bs A^T\bs B)$ induces the Frobenius norm. It follows that
$$\lim_{\epsilon\to 0} \frac{ h(\bs C + \epsilon \bs A) - h(\bs C)}{\epsilon} = \ip{\bs{B}^T(\bs{BC} - \bs D), \bs A}_F,$$
so that the Frechet derivative of $h$ at $\bs C$ is
$$\mathrm{D}h(\bs C)(\cdot)  = \ip{\bs{B}^T(\bs{BC} - \bs D), \cdot }_F. $$ A first-order necessary condition for the minimization of $h$ is therefore that
$$\bs C = \bs B^{\dagger} \bs D.$$
\end{proof}
By the above lemma, we see that the solution to the least squares problem \eqref{eq: matrix LS problem} given by $\bs C = \bs B^{\dagger} \bs D$ is equivalent to the system of least squares problems
\begin{align*}\bs c^{j} = \bs B^{\dagger}\bs g^j \hspace{0.5cm}\text{for}\hspace{0.5cm} (j\in [d_{\mathrm{out}}])\end{align*}
corresponding to each block of the system $\bs G\bs c = \bs g.$

We finally turn to the issue of computational complexity. We begin with the complexity of solving the least squares problem. It is common for least squares problems to be solved by either a Cholesky or a QR factorization. The latter is more stable, and we shall assume that this is the method undertaken. We also assume that $\bs B$ is non-singular (which can be achieved with high probability under the sample complexity condition \eqref{cor:OptStab and Sample Complexity Estiamte}). In this event, the algorithm is as follows:
\begin{enumerate}
    \item Compute the  QR factorization $\bs B = \bs Q \bs R$
    \item Compute the matrix $\bs Q^T \bs D$
    \item Solve the upper triangular system $\bs R \bs C = \bs Q^T \bs D $ by back-substitution. 
\end{enumerate}
The first step bears the majority of the cost. It is shown in \cite{trefethen2022numerical} that forming the QR decomposition of an $n\times n$ matrix is $\mathcal O(n^3),$ whether one uses Householder reflectors or the modified Gram-Schmidt method. The matrix multiplication and the back-substitution steps each require $\mathcal O(n^2d_{\mathrm{out}})$ flops. We now turn to the complexity of forming the matrices $\bs B$ and $\bs D.$ Recall that
$$B_{ij} = \frac{1}{M}\sum_{m\in [M]}w(f^m)k_i(f^m)k_j(f^m) \hspace{0.25cm}\text{and}\hspace{0.25cm} D_{ij} = \frac{1}{M}\sum_{m\in [M]}w(f^m)\psi_j^*(g^m) k_i(f^m).$$
We assume that $\{f^m\}_{m\in [M]}$ is given and that the output data $\{g^m\}_{m\in[M]}$ is already represented in the basis $\{\psi_j\}_{j\in [d_{\mathrm{out}}]},$ so that $\xi^*_jg^m$ is $\mathcal O(1).$ Let $\kappa$ and $\omega$ be the maximal costs of computing $k_j(f^m)$ and $w(f^m)$, respectively.  One may store these values with a one time cost of $M(\omega + n\kappa)$. To form each of the $n(n+1)/2$ defining entries of the symmetric matrix $\bs B$ requires $M-1$ additions and $2M$ multiplications. Forming $\bs B$ therefore takes $\frac{1}{2}n(n+1)(3M-1)$ flops. Similarly, forming $\bs D$ requires $n^2(3M-1)$ flops.
Thus, the computational complexity of the least squares problem is: 
$$\mathcal O\left(\underbrace{n^3 + n^2d_{\mathrm{out}}}_{\text{LS Solve}} + \underbrace{n^2M}_{\text{Forming LS System}} + \underbrace{M(\omega + n\kappa)}_{\text{Computing Entries}}\right).$$
Observe that in order to satisfy the sample complexity bound \eqref{cor:OptStab and Sample Complexity Estiamte}, $M>n$, so forming the least squares system is  more computationally intensive than solving it. Finally, we note that if the input functions $f^m$ are sampled optimally, then
$$w(f^m) = \frac{N}{\sum_{\lambda \in \Lambda} \norm{ \Phi_{\lambda}(f)}_{\mathcal Y}^2} = \frac{n}{\sum_{j\in [n]} \abs{k_j(f^m)}^2}$$
so that $\omega \sim 2n$ -- assuming that the $k_j(f)$ are already computed. The cost $\kappa$ of these computations depends on the choice of operator basis. For example, in the case of orthogonal polynomial operators, we have 
$$\kappa_j(\cdot)  =\prod_{j\in [d_{\mathrm{in}}]}p^j_{\lambda_j}(\xi_j^* (\cdot)).$$
Assuming that the univariate degrees $\lambda_j$ are bounded by $r,$ we find (by a naive calculation)\footnote{The naive calculation assumes order $r^2$ complexity for evaluating a degree $r$ polynomial. Using Horner's method reduces this to linear complexity. } that $\kappa \sim d_{\mathrm{in}}r^2.$

\end{document}